\newtheorem{thm}{Theorem}[section]
\newtheorem{cor}[thm]{Corollary}
\newtheorem{prop}[thm]{Proposition}
\newtheorem{lem}[thm]{Lemma}
\theoremstyle{definition}
\newtheorem{defn}[thm]{Definition}
\newtheorem{con}[thm]{Construction}
\newtheorem{notn}[thm]{Notation}
\newtheorem{rem}[thm]{Remark}
\let\c@equation\c@thm
\numberwithin{equation}{section}
\let\SK@label\label\fi
 \let\your@thm\@thm
 \def\@thm#1#2#3{\gdef\currthmtype{#3}\your@thm{#1}{#2}{#3}}
 \def\mylabel#1{{\let\your@currentlabel\@currentlabel\def\@currentlabel
  {\currthmtype~\your@currentlabel}
 \SK@label{#1@}}\label{#1}}
 \def\myref#1{\ref{#1@}}
\newcommand{\mb}[1]{\mathbf{#1}}
\newcommand{\Fun}{\mathrm{Fun}}
\newcommand{\gen}{$\bF_\bullet$}
\newcommand{\xrtarr}[1]{\xrightarrow{#1}}
\newcommand{\sbt}{{\, \begin{picture}(-1,1)(0.5,-1)\circle*{1.8}\end{picture}\hspace{.05cm}}}
\newcommand{\shim}{\bS_G}
\newcommand{\shimsma}{\bS_G^{\bN_G}}
\newcommand{\shimtimes}{ \widetilde{\bS}_G^{\bN_G}}
\newcommand{\symfsma}{\bS_G^{\SI}}
\newcommand{\symsma}{\bS_G^{\SI_G}}
\newcommand{\barc}[3]{B({#1},{#2},{#3})}
\title[A symmetric monoidal  equivariant Segal infinite loop space machine]{A symmetric monoidal and equivariant Segal infinite loop space machine}
\author{B. Guillou}
\address{Department of Mathematics, The University of Kentucky, Lexington, KY 40506--0027}
\email{bertguillou@uky.edu}
\author{J.P. May}
\address{Department of Mathematics, The University of Chicago, Chicago, IL 60637}
\email{may@math.uchicago.edu}
\author{M. Merling}
\address{Department of Mathematics, University of Pennsylvania, Philadelphia, 19104}
\email{mmerling@math.upenn.edu}
\author{A.M. Osorno}
\address{Department of Mathematics, Reed College, Portland, OR 97202}
\email{aosorno@reed.edu}
\thanks{B. Guillou was  supported  by Simons Collaboration Grant No. 282316 and NSF grant DMS-1710379.}\thanks{M. Merling was supported by NSF grant DMS-1709461}
\thanks{A.M. Osorno was  supported by Simons Foundation Grant No. 359449, the Woodrow Wilson Career Enhancement Fellowship and NSF grant DMS-1709302}
\thanks{NSF RTG grant DMS-1344997 supported several collaborator visits to Chicago.}
\subjclass[2010]{Primary 55P42, 55P43, 55P91;\\
Secondary 18A25, 18E30, 55P48, 55U35}
\keywords{infinite loop space machine, equivariant stable homotopy theory}
\begin{document}

\begin{abstract}
In \cite{MMO}, we reworked and generalized equivariant infinite loop space theory, which shows
how to construct $G$-spectra from $G$-spaces with suitable structure. In this paper, we construct a new variant of the equivariant  Segal machine that starts  from the category $\sF$ of finite sets rather than from  the category $\sF_G$ of finite $G$-sets and which is equivalent to the machine  studied in  \cite{Shim, MMO}. In contrast to the machine in \cite{Shim, MMO},  the new machine gives a lax symmetric monoidal functor from the symmetric monoidal category of $\sF$-$G$-spaces  to the symmetric  monoidal category of orthogonal $G$-spectra. We relate it multiplicatively to suspension $G$-spectra and to Eilenberg-Mac\,Lane $G$-spectra via lax symmetric monoidal functors from based $G$-spaces and from abelian groups to $\sF$-$G$-spaces. 
Even non-equivariantly, this gives an appealing new variant of the Segal machine.   This new variant makes the equivariant generalization of the theory essentially formal, hence  likely to be applicable in other contexts.
\end{abstract}

\maketitle

\tableofcontents

\section*{Introduction} 

 The Segal infinite loop space machine \cite{Seg} provides one of the two most commonly used approaches for constructing spectra from space level data,
and it is widely used for constructing algebraic $K$-theory spectra.  The input of the Segal infinite loop space machine is $\sF$-spaces, which are functors from the category of based finite sets, which we denote by $\sF$, to the category of spaces.\footnote{Segal defined them as contravariant functors from a certain category $\GA$ and called them $\GA$-spaces, but  $\GA^{op}$ is just the category of finite based sets $\sF$.} The machine constructs spectra from $\sF$-spaces, and all connective spectra arise in this way.
The category of  $\sF$-spaces is
symmetric monoidal under Day convolution, and the conceptual version of the Segal machine from $\sF$-spaces to spectra, defined as a prolongation functor, is lax symmetric monoidal for formal reasons. However, this version of the machine is not homotopically well-behaved except under cofibrancy conditions that are not usually satisfied, and the derived version  given by the bar construction (which can be thought of as a structured cofibrant approximation) loses the symmetry: it is lax monoidal but is \emph{not} lax symmetric monoidal.  

There are two evident equivariant generalizations of nonequivariant $\sF$-spaces: one can consider functors from the category of based finite sets $\sF$ or from the category of based finite $G$-sets $\sF_G$ to $G$-spaces. However, the resulting categories of $\sF$-$G$-spaces and $\sF_G$-$G$-spaces are equivalent \cite{Shim2}. The equivariant Segal machine defined in \cite{Shim} and studied further in \cite{MMO} takes $\sF_G$-$G$-spaces as input. Just as nonequivariantly, this machine is defined as a prolongation functor precomposed with a homotopical replacement of the input, which results in a lax  monoidal but \emph{not} lax symmetric monoidal functor to $G$-spectra.   In \cite{MMO}, this machine is shown to be equivalent to the operadic equivariant infinite loop space machine from \cite{GM3}.

It is desirable to have a machine whose natural input is $\sF$-$G$-spaces rather than $\sF_G$-$G$-spaces.  However, there is a subtle but critical problem with the obvious equivariant
generalization of the homotopical Segal machine that makes it unusable equivariantly. This is explained in
 \cite[Warning 3.10]{MMO}, as we recall in Remark~\ref{why-not-N}: even when fed the correct input $\sF$-$G$-spaces, the usual bar construction defined using $\sF$ fails to produce genuine $\OM$-$G$-spectra.  That is, the usual bar construction on $\sF$-$G$-spaces does not give an equivariant infinite loop space machine when applied to appropriate $\sF$-$G$-spaces, as it does nonequivariantly when applied to $\sF$-spaces \cite{Seg} and equivariantly when applied to $\sF_G$-$G$-spaces \cite{Shim, MMO}.

In this paper, we introduce a modification of the bar construction that solves the problem pointed out in \cite[Warning 3.10]{MMO} and gives an equivariant Segal infinite loop space machine that starts from $\sF$-$G$-spaces. The modification of the bar construction entails complementary use of different monads with the same algebras, as in \cite[\S4]{Rant1}.  Our new version of the Segal machine has the unexpected bonus that it gives a new Segal machine that is both homotopically correct \emph{and} lax symmetric monoidal.

In fact, as summarized in \S\ref{SISIG}, we give several equivalent such machines.   All are equivalent to the
homotopically correct but non-symmetric monoidal machine developed in \cite{Shim, MMO}.
With the new variants of the Segal machine, any multiplicative algebraic structure, such as rings, modules, 
and algebras, that we see on the level of $\sF$-$G$-spaces is automatically transported by the
machine to corresponding algebraic structure on the level of genuine $G$-spectra. 

%This completes our development of multiplicative equivariant infinite loop space theory 
%starting from space level input.  It gives the starting point for a considerably deeper theory that converts categorical input into the space level 
%input considered here.   That theory will be given in the sequel \cite{GMMO3}.

The new variants of the Segal machine are made possible by an invariance theorem that we prove in \S\ref{inv}.  It allows us to use variant versions of the bar construction and prove that they give equivalent machines. The key construction, developed in \S\ref{formal} and \S\ref{concrete},  is a symmetric version of the bar construction that leads to the good properties of our new machines.  In \S\ref{NNG} we review the classical equivariant Segal machine from \cite{Shim,MMO}. The construction and comparisons of the new machines are given in \S\ref{symma}.  We prove that the new variants are symmetric monoidal in \S\ref{SymMon}.   

We give two
examples in  \S\ref{homotopical}.  One uses a strong symmetric monoidal functor from based $G$-spaces to $\sF$-$G$-spaces to give an equivariant version of the Barratt-Priddy-Quillen theorem that applies the Segal machine to construct suspension $G$-spectra.   Another  uses a lax symmetric monoidal functor from Abelian $G$-groups to $\sF$-$G$-spaces to construct genuine ring, module, and algebra Eilenberg-Mac\,Lane $G$-spectra. Parenthetically, we also  note that the Segal machine preserves homotopies.

\subsection*{Acknowledgements} 
It is a real pleasure to thank Stefan Schwede for very helpful conversations with the third author and, in particular, for suggesting the idea of the proof of the invariance theorem in \S\ref{inv}. It is also a pleasure to thank the referee for a remarkably careful reading that uncovered several glitches.

\section{Definitions, conventions, and the Segal machine}\label{barsection}

We fix a {\bf finite} group $G$ throughout the paper. In this section, we recall some definitions and results from \cite{MMO} 
and specify what we mean by a Segal machine. 

\subsection{The relevant categories of $G$-spaces}\label{sec:CatGSpaces}
All spaces are understood to be compactly generated and weak Hausdorff.  We use the following notation.

\begin{itemize}
\item $\sU$ is the category of unbased spaces and unbased maps; $\sU_*$ and $\sT$ are the categories of based and of nondegenerately based spaces, respectively, and based maps;  
\item  $G\sU$  is the category of unbased $G$-spaces and unbased $G$-maps; $G\sU_*$ and $G\sT$ are the categories of based 
and of nondegenerately based $G$-spaces, respectively, and based $G$-maps; 
\item $\sU_G$ is the category of unbased $G$-spaces and \emph{all} (that is, not necessarily equivariant) unbased maps; ${\sU_G}_*$ and  $\sT_G$ are the categories  of based and of nondegenerately based $G$-spaces, respectively, and \emph{all} based maps. 
\end{itemize}

While $\sU_*$ and its equivariant avatars have better formal properties than $\sT$, all of our 
$\sF$-$G$-spaces and $\sF_G$-$G$-spaces, as defined in \S\ref{sec:FGSpaces} below, are required to take values in $\sT_G$.  This restriction is mathematically essential, but it is trivial to arrange by whiskering based $G$-spaces to obtain nondegenerately based $G$-spaces, just as in \cite[Appendix]{MayGeo} nonequivariantly.  The following remark explains why we do not just restrict attention to $\sT_G$ everywhere.

\begin{rem}\mylabel{quibble}  A presentation of a pair $(Y,A)$ of $G$-spaces as a $G$-NDR pair induces a presentation of the pair $(\mathbf{Map}(X,Y),\mathbf{Map}(X,A))$ as a $G$-NDR pair when $X$ is a compact $G$-space.  Taking $A=\ast$, it follows that $\mathbf{Map}(X,Y)$ is nondegenerately based by the trivial map  $X\rtarr \ast$ when $Y$ is nondegenerately based.  We deduce that the $G$-space ${\sU_G}_*(X,Y)$ of all based maps between based $G$-spaces is nondegenerately based when $X$ is compact and $Y$ is nondegenerately based.  We do not believe that the conclusion holds for general $X$, a problem overlooked in \cite{MMO}.  Thus we must allow all based spaces and all based $G$-spaces when considering enrichment.
\end{rem}

The categories of unbased $G$-spaces and $G$-maps are enriched over $\sU$, and the categories of based $G$-spaces and based $G$-maps are enriched over $\sU_*$.  
The category $\sU_G$ is enriched over 
$G\sU$, with $G$ acting by conjugation on function spaces, and we may view $G\sU$ as the $G$-fixed point category $(\sU_G)^G$.
Similarly ${\sU_G}_*$ and $\sT_G$ are enriched over $G\sU_*$.  It is important to distinguish between based and unbased enrichment; \cite[\S1.3]{MMO} discusses the importance of this distinction. 

The categories listed above are closed symmetric monoidal with respect to $\times$ for unbased spaces and $\sma$ for based spaces.   It is often best to think of $\sU_G$ and ${\sU_G}_{*}$ not as categories in their own right but rather as giving the hom objects of the closed structures on the symmetric monoidal categories $(G\sU,\times)$ and $(G\sU_{\ast}, \sma)$. This makes sense since for unbased $G$-spaces $X,Y,Z$ we have
$$  G\sU(X\times Y, Z) \iso G\sU(X,\sU_G(Y,Z))$$
and for based $G$-spaces $X,Y,Z$ we have
$$  G\sU_*(X\sma Y, Z) \iso G\sU_*(X,{\sU_G}_{*}(Y,Z)).$$ 

Our definition of $G$-equivalences of $G$-spaces is the usual definition where a $G$-equivalence is detected on fixed points.
 \begin{defn} A $G$-map $f\colon X\rtarr Y$  is a \emph{$G$-equivalence}  if it induces weak homotopy equivalences $f^H\colon X^H\rtarr Y^H$ 
 of fixed point spaces for all subgroups $H\subset G$.
\end{defn}

\subsection{$\sF$-$G$-spaces and $\sF_G$-$G$-spaces}\label{sec:FGSpaces}
We use the following notation.
\begin{itemize}
\item $\sF$ is the subcategory of $\sT$ of based finite  sets $\bf{n}$, where ${\bf n} = \{0,1,\dots, n\}$ with basepoint $0$.
\item $\PI$ is the subcategory of $\sF$ of those maps $\ph\colon \bf m\rtarr \bf n$ such that $|\ph^{-1}(j)| \leq 1$ for $1\leq j\leq n$.
\item $\sF_G$ is the subcategory of $\sT_G$ of based finite $G$-sets of the form $({\bf{n},\al})$, where $\al\colon G\rtarr \SI_n$ is a homomorphism;  
we think of $\al(g)$ for $g\in G$ as giving a based function $g\colon \bf n \rtarr \bf n$.   
\item $\PI_G$ is the subcategory of $\sF_G$ of those maps $\ph\colon (\bf m, \al) \rtarr (\bf n,\be)$ such that $|\ph^{-1}(j)| \leq 1$ for $1\leq j\leq n$.
\end{itemize}

The category $\PI$ was used in \cite{MT} when defining categories of operators. The intuition is that if one thinks of $\sF(\bf m, \bf n)$ as the space of operations with $m$ inputs and $n$ outputs (in particular, $\sF(\bf m , \bf 1)$ is the space of $m$-ary operations), then $\PI$ contains only  those operations that do not combine variables. The category $\PI_G$ was used in \cite{Sant, MMO} to generalize to equivariant categories of operators when indexing on finite $G$-sets.

The category $\sF$ is enriched in based sets and thus, with the discrete topology on morphism sets, in $\sT$. 
We can also view it as enriched in $G\sT$, with the trivial $G$-action on morphism sets. The morphisms of $\sF_G$ 
are the based functions, but now $G$ acts by conjugation on morphism sets and we regard $\sF_G$ as a category enriched in $G\sT$.
The maps in $\PI$ and $\PI_G$ are the composites of injections, projections, and permutations.

\begin{notn}
To make the notation less cumbersome, especially when indices are involved, we  let $\al$ denote  the $G$-set determined by the homomorphism 
$\al\colon G\rtarr \SI_n$, writing $(\bf{n},\al)$ when necessary for clarity.  We sometimes use the notation $|\al|$
to indicate the cardinality $n$ of ${\bf{n}}\backslash\{0\}$. 
\end{notn}

We recall our definitions of $\sF$-$G$-spaces and $\sF_G$-$G$-spaces from \cite[\S 2.1]{MMO}.

\begin{defn}
 An \emph{$\sF$-$G$-space} is a $G\sU_*$-functor $X\colon \sF\rtarr \sT_G$.  Replacing $\sF$ by $\PI$ gives the notion of a \emph{$\PI$-$G$-space}.
\end{defn}
Since $G$ acts trivially on $\sF$, $X$ lands in the fixed point category $(\sT_G)^G=G\sT$.  Thus it is equivalent to define
an $\sF$--$G$-space $X$ to be a $\sU_*$-functor $X\colon \sF\rtarr G\sT$.  

\begin{rem}\mylabel{PIvsF} Since $\PI\subset \sF$, an $\sF$-$G$-space has an underlying $\PI$-$G$-space, but an $\sF$-$G$-space has more structure.  In particular,  it has an $n$-fold multiplication $X_n\rtarr X_1$ coming from the map $\bf n\rtarr \bf 1$ that sends $0$ to $0$ and all other $i$ to $1$. For  $n\geq 2$, this  map is in $\sF$ but not in $\PI$. Due to this extra structure,  the assignment $n\mapsto A^n$ for a $G$-space $A$, which always extends to a functor on $\Pi$, generally does not extend to a functor on
$\sF$.  There is such an extension if and only if $A$ has a product $A^2\rtarr A$ under which it is a  commutative monoid in $G\sT$.

\end{rem}

\begin{notn}
We  write $X_n$ or $X({\bf{n}})$ for the value of $X$ on $\bf{n}$, choosing whichever notation is convenient in context. 
\end{notn}

From the enrichment in $\sU_*$, we see that $X$ is given by based maps 
$$\sF({\bf{m}},{\bf{n}}) \rtarr G\sU_*(X({\bf{m}}),X({\bf{n}}))$$ 
and composition factors through smash products.  Also, as a based functor, any $\sF$-$G$-space $X$ is necessarily reduced, meaning that $X_0=\ast$ (see \cite[Lemma 1.13]{MMO}). There is a unique morphism $\bf{0}\rtarr \bf{n}$ in $\sF$, and the basepoint of $X_n$ is the image of the induced map $X_0\rtarr X_n$.

\begin{defn}
An \emph{$\sF_G$-$G$-space} is a $G\sU_*$-functor $Y\colon \sF_G\rtarr \sT_G$.  Replacing $\sF_G$ with $\PI_G$ gives the notion of a \emph{$\PI_G$-$G$-space}.
\end{defn}
Again, such a $Y$ is necessarily reduced, meaning that $Y(\bf{0})=\ast$. 

\begin{rem}\mylabel{PIGvsFG}
 As in \myref{PIvsF}, an $\sF_G$-$G$-space has an underlying $\PI_G$-$G$-space. The extra structure 
 in $\sF_G$ encodes multiplications on $\sF_G$-$G$-spaces $Y$ that are indexed on finite $G$-sets. We have a multiplication $Y(\alpha)\rtarr Y_1$ where $\al\colon G\rtarr \Sigma_n$  specifies a $G$-action on the set  $\bf n$.
 \end{rem}

We have an adjoint pair of functors given  by restricting along the inclusion $i\colon \sF\rtarr \sF_G$ and prolonging the other way by taking the left Kan extension along $i$.
\begin{equation}\label{ProlongPair} 
\xymatrix@1{ \text{$\sF$-$G$-spaces  } \ar@<0.7ex>[rr]^{\bP_\sF^{\sF_G}}  && \text{ $\sF_G$-$G$-spaces} \ar@<0.7ex>[ll]^{\bU^{\sF_G}_\sF}. }
\end{equation}
We have a similar adjunction using the inclusion $\PI \rtarr \PI_G.$
\begin{equation}\label{ProlongPairPI} 
\xymatrix@1{ \text{$\PI$-$G$-spaces  } \ar@<0.7ex>[rr]^{\bP_\PI^{\PI_G}}  && \text{ $\PI_G$-$G$-spaces} \ar@<0.7ex>[ll]^{\bU^{\PI_G}_\PI}. } 
\end{equation}
The following result originally appeared in \cite{Shim2}  (cf. \cite[Theorem 2.30]{MMO}).  

\begin{prop}\mylabel{ProlongEquiv} The adjoint pairs of \eqref{ProlongPair} and \eqref{ProlongPairPI} are equivalences of categories.
\end{prop}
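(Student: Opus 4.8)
The plan is to establish that both adjunctions in \eqref{ProlongPair} and \eqref{ProlongPairPI} are equivalences by checking that the unit and counit are natural isomorphisms. Since both cases are formally identical — only the indexing categories $(\sF,\sF_G)$ versus $(\PI,\PI_G)$ change, and the relevant structural features are the same — I would give the argument for $\sF \hookrightarrow \sF_G$ and remark that the $\PI$-case is verbatim the same. The key input is an understanding of the category $\sF_G$ and the inclusion $i\colon \sF\rtarr\sF_G$: every object $(\mathbf{n},\al)$ of $\sF_G$ has the \emph{same underlying based set} $\mathbf{n}$ as an object of $\sF$, so $i$ is bijective on objects; what changes is that the morphism sets $\sF_G((\mathbf{m},\al),(\mathbf{n},\be))$ carry a nontrivial conjugation $G$-action, whereas in $\sF$ the morphism sets are $G$-fixed.

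First I would unwind the left Kan extension. For an $\sF$-$G$-space $X$, the prolongation $\bP_\sF^{\sF_G}X$ evaluated at $(\mathbf{n},\al)$ is the (enriched) coend $\int^{\mathbf{m}\in\sF} \sF_G(i\mathbf{m},(\mathbf{n},\al))\sma X(\mathbf{m})$ in $G\sT$. The crucial observation is that, because $i$ is bijective on objects and fully faithful onto the $G$-fixed morphisms, this coend collapses: the canonical map $X(\mathbf{n})=\sF(\mathbf{n},\mathbf{n})^{+}\!\wedge_{\sF}\! X \rtarr \bP_\sF^{\sF_G}X(\mathbf{n},\al)$ induced by $\mathrm{id}_{\mathbf{n}}$ is an isomorphism onto the $G$-fixed points of the target, and then the conjugation action on $\sF_G$ transports the $G$-action uniquely. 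Concretely, I would argue that to give an $\sF_G$-$G$-space is the same as to give a $G\sT$-functor out of $\sF_G$, and since $\sF_G$ is generated under the $G$-action by the image of $\sF$ — every morphism $\ph\colon(\mathbf{m},\al)\rtarr(\mathbf{n},\be)$ of $\sF_G$ is $g\cdot\psi$ for some $g\in G$ and some $\psi$ in the image of $i$ (namely any based function $\mathbf{m}\rtarr\mathbf{n}$, viewed without regard to equivariance, lies in $\sF$) — the $G\sT$-enriched functoriality forces the value on all of $\sF_G$ once it is specified (equivariantly) on $\sF$. This shows $\bU^{\sF_G}_\sF$ is fully faithful and essentially surjective directly, which already gives the equivalence; but I would phrase it via unit and counit for cleanliness.

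The counit $\bP_\sF^{\sF_G}\bU^{\sF_G}_\sF Y \rtarr Y$: evaluating at $(\mathbf{n},\al)$ and using the coend description, one checks it is the identity on underlying based sets and $G$-equivariant, hence an isomorphism, using that $Y$ is already a $G\sT$-functor so its structure maps for the non-$G$-fixed morphisms of $\sF_G$ are determined by conjugation from the $G$-fixed ones. The unit $X \rtarr \bU^{\sF_G}_\sF\bP_\sF^{\sF_G}X$: by the collapse of the coend above, $\bP_\sF^{\sF_G}X(i\mathbf{n}) \iso X(\mathbf{n})$ naturally, and one verifies the restricted functoriality agrees. I would assemble these into the statement that the two composites are naturally isomorphic to the respective identity functors.

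The main obstacle is bookkeeping with the \emph{based} enrichment over $G\sT$ rather than over $\sT$, and in particular keeping straight which morphism sets carry conjugation actions and which are $G$-trivial — this is exactly the distinction emphasized in the discussion following Remark~\ref{quibble@} and in \cite[\S1.3]{MMO}. One must be careful that the coend defining the left Kan extension is the \emph{enriched} coend in $G\sT$ (not the naive one), and that "every morphism of $\sF_G$ is in the $G$-orbit of one coming from $\sF$" is used at the level of the enrichment, i.e. that the $G\sT$-functor structure of an $\sF_G$-$G$-space is genuinely determined by its restriction to $\sF$ together with equivariance. Modulo this care, no step is deep; the result is essentially the formal statement that $i\colon\sF\rtarr\sF_G$ exhibits $\sF$ as the "$G$-fixed subcategory" of $\sF_G$ in a way that induces an equivalence on $G$-equivariant enriched functor categories. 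I would close by noting the $\PI$-version follows identically, since $\PI_G$ has the same relationship to $\PI$ (bijective on objects, $G$ acting by conjugation on morphisms, generated under the $G$-action by $\PI$).
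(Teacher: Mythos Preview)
Your proposal contains a genuine error: the inclusion $i\colon \sF \rtarr \sF_G$ is \emph{not} bijective on objects. The objects of $\sF_G$ are pairs $(\mathbf{n},\al)$ with $\al\colon G\rtarr\SI_n$ a homomorphism, so for each $n$ there are as many objects of $\sF_G$ as there are homomorphisms $G\rtarr\SI_n$, whereas $\sF$ has only the single object $\mathbf{n}$. The inclusion $i$ sends $\mathbf{n}$ to $(\mathbf{n},e)$ with $e$ the trivial homomorphism; it is fully faithful on morphism $G$-sets but far from surjective on objects. Your later claim that every morphism of $\sF_G$ is of the form $g\cdot\psi$ with $\psi$ in the image of $i$ inherits this confusion: the conjugation $G$-action on $\sF_G((\mathbf{m},e),(\mathbf{n},e))$ is trivial and never leaves that hom-set, so no such translate reaches a morphism between objects carrying a nontrivial $\al$ or $\be$.

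The paper itself gives no proof, deferring to \cite{Shim2} and \cite[Theorem~2.30]{MMO}. The correct mechanism is this: the identity function on $\mathbf{n}$ is an isomorphism $(\mathbf{n},\al)\rtarr(\mathbf{n},e)$ in $\sF_G$ for every $\al$ (morphisms in $\sF_G$ are all based functions), though this isomorphism is not $G$-fixed. Using it, one checks that the enriched left Kan extension satisfies $(\bP_\sF^{\sF_G}X)(\mathbf{n},\al)\cong X(\mathbf{n})$ as spaces, but with the \emph{twisted} $G$-action in which $g\in G$ acts as the composite of $X(\al(g))$ with the original $g$-action on $X(\mathbf{n})$. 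With this explicit formula the unit and counit are readily verified to be isomorphisms. Your coend-collapse intuition can be repaired along these lines, but only after the extra objects of $\sF_G$ and the resulting twist in the $G$-action are correctly accounted for.
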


\subsection{Equivalences and the notion of a Segal machine}
We recall the notions of equivalence for $\sF$-$G$-spaces and $\sF_G$-$G$-spaces from \cite[Definitions 2.6 and 2.27]{MMO}. If $X$ is a $\PI$-$G$-space, the $G$-space $X_n$ has a $\Sigma_n$-action coming from functoriality since permutations are maps in $\PI$. The $G\times \Sigma_n$-action on $X_n$ is used in the following definition of equivalences.

\begin{defn}\mylabel{equivalences} Let $f\colon X\rtarr X'$ be a map of $\PI$-$G$-spaces and let $j\colon Y\rtarr Y'$ be a map of $\PI_G$-$G$-spaces.
\begin{enumerate}[(i)]
 \item The map $f$ is an \emph{\gen-level equivalence} if $f_n\colon X_n^{\LA} \rtarr {X'}_n^{\LA}$ is a weak equivalence for all   $\LA\in \bF_n$, where $\bF_n$ is the set of subgroups $\LA$ of $G\times \SI_n$ that intersect  $\SI_n$ trivially. 
 \item The map $j$ is a \emph{level $G$-equivalence} if $j_{\al}\colon Y(\al)\rtarr Y'(\al)$ is a $G$-equivalence for all finite $G$-sets $\al$.  
 \end{enumerate}
We say that a map of $\sF$-$G$-spaces is an \gen-level equivalence if its underlying map of $\PI$-$G$-spaces is an \gen-level equivalence, and we say that a map of $\sF_G$-$G$-spaces is a level 
$G$-equivalence if its underlying map of $\PI_G$-$G$-spaces is a level $G$-equivalence.
\end{defn}
As explained in \cite[Theorem 2.30]{MMO}, (i)  is what we get if we transport the notion of equivalence of $\sF_G$-$G$-spaces from (ii) along the equivalence of categories \eqref{ProlongPair}. 
Thus we have the following result, which will be used heavily whenever we compare $\sF$-$G$-spaces with $\sF_G$-$G$-spaces.

\begin{thm}\mylabel{equiv}
A map $f$ of $\sF$-$G$-spaces is an \gen-level equivalence if and only if $\bP_{\sF}^{\sF_G} f$ is  a level $G$-equivalence.  
\end{thm}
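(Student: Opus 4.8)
The plan is to reduce the statement to a pointwise comparison of fixed-point spaces, using the fact that $\bP_\sF^{\sF_G}$ is a left Kan extension along the inclusion $i\colon \sF \rtarr \sF_G$. First I would recall the explicit pointwise formula for the left Kan extension: for an $\sF$-$G$-space $X$ and a finite $G$-set $\al = (\bf n, \al)$, the value $(\bP_\sF^{\sF_G} X)(\al)$ is computed as a coend
\[
(\bP_\sF^{\sF_G} X)(\al) \iso \int^{\bf m \in \sF} \sF_G(i(\bf m), \al) \sma X(\bf m),
\]
where the morphism $G$-sets of $\sF_G$ carry the conjugation $G$-action. The key point is that a based map $\bf m \rtarr (\bf n, \al)$ is just a map of underlying sets, and so $\sF_G(i(\bf m),\al)$, as a $G$-set, is the set of based functions $\bf m \rtarr \bf n$ with $G$ acting on the target via $\al$ and trivially on the source. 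Taking $G$-fixed points and, more generally, $\LA$-fixed points for $\LA \le G\times\SI_n$ with $\LA \cap \SI_n = e$, should turn this coend into a sum indexed by orbit-type data, expressing $(\bP_\sF^{\sF_G}X)(\al)^{H}$ as a wedge (or more precisely a suitable colimit that simplifies to a product up to the reduced-ness of $X$) of fixed-point spaces $X_k^{\LA}$ for various subgroups $\LA \in \bF_k$.

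The second step is to run exactly this analysis on both $X$ and $X'$ simultaneously and observe that $f\colon X\rtarr X'$ induces, on each summand, a map of the form $f_k^\LA \colon X_k^\LA \rtarr {X'}_k^\LA$ with $\LA \in \bF_k$. Thus if $f$ is an \gen-level equivalence, every such map is a weak equivalence, and since all the colimits/coends in sight are built from these along cofibrations (using that the based $G$-spaces involved are nondegenerately based, as guaranteed by the standing hypothesis that $\sF$-$G$-spaces take values in $\sT_G$, and using that the indexing diagrams are appropriately free/Reedy-cofibrant), the induced map $(\bP_\sF^{\sF_G}f)(\al)^H$ is a weak equivalence. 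This gives the ``only if'' direction: $f$ an \gen-level equivalence implies $\bP_\sF^{\sF_G}f$ is a level $G$-equivalence. For the ``if'' direction, I would use Proposition~\ref{ProlongEquiv}: since $\bP_\sF^{\sF_G}$ is an equivalence of categories with inverse $\bU_\sF^{\sF_G}$, the underlying $\PI$-$G$-space of $X$ is recovered from $\bP_\sF^{\sF_G}X$, and one checks that the relevant $\LA$-fixed point spaces $X_n^\LA$ for $\LA\in\bF_n$ appear as retracts (via the unit/counit of the equivalence restricted to the representable-at-$\bf n$ objects) of the fixed point spaces of $\bP_\sF^{\sF_G}X$ evaluated at suitable $G$-sets. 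Alternatively — and this is cleaner — I would invoke \cite[Theorem 2.30]{MMO}, cited in the excerpt just before the statement, which already identifies the \gen-level equivalences as precisely the maps transported from level $G$-equivalences along the categorical equivalence \eqref{ProlongPair}; combined with the fact that $\bP_\sF^{\sF_G}$ and $\bU_\sF^{\sF_G}$ are mutually inverse, this is a formal two-line argument.

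The main obstacle I expect is the bookkeeping in the fixed-point computation of the coend: one must identify exactly which subgroups $\LA$ of $G\times\SI_k$ arise, and check that they all lie in $\bF_k$ (i.e. intersect $\SI_k$ trivially) — this is where the ``graph subgroup'' structure of $\sF_G$ is essential and is precisely the phenomenon that distinguishes the $\sF$-indexed theory from a naive one. Concretely, a $G$-fixed based function $\bf m \rtarr \bf n$ for the conjugation action corresponds to a $G$-equivariant based map where $\bf m$ has a $G$-action induced from $\al$ along the function, and decomposing $\bf n$ into $G\times\SI$-orbits of the relevant graph type is what produces the indexing set $\bF_k$. I would organize this by first treating $\PI$ versus $\PI_G$ (where the maps are just injections, projections, permutations, so the orbit analysis is transparent) and then noting that the \gen-level and level-$G$-equivalence conditions only refer to the underlying $\PI$/$\PI_G$-structure by definition, so the $\sF$/$\sF_G$ multiplications play no role in the equivalence check — this is what makes the statement go through despite the failure of such a clean comparison at the level of the full bar-construction machine.
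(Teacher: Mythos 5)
Your ``cleaner alternative'' is exactly the paper's approach: the paper states this theorem without a separate proof, and the sentence immediately preceding it makes plain that the statement is a direct restatement of \cite[Theorem 2.30]{MMO} together with the categorical equivalence of Proposition~\ref{ProlongEquiv}, since the $\bF_\bullet$-level equivalences of Definition~\ref{equivalences}(i) are, by design, the transport of the level $G$-equivalences of~(ii) along $\bP_\sF^{\sF_G}$.

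Your more explicit coend/fixed-point plan is in the right spirit (it is roughly how the content behind the cited result is established), but there is a genuine gap in the ``if'' direction as you have sketched it. Given $\LA\in\bF_n$, the graph of a homomorphism $\ph\colon H\rtarr\SI_n$, one cannot in general extend $\ph$ to a homomorphism $\al\colon G\rtarr\SI_n$, so it is not immediate how to produce a pair $(\al,H)$ with $\bigl((\bP_\sF^{\sF_G}X)(\al)\bigr)^H\iso X_n^\LA$; the ``retract'' phrasing you propose sidesteps rather than resolves this. The actual resolution involves passing to finite $G$-sets of possibly larger cardinality, so that $\LA$ appears inside the relevant graph subgroup of the enlarged $G$-action, and this bookkeeping is exactly the nontrivial content behind the citation rather than a purely formal step.
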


We also recall what it means for an $\sF$-$G$-space $X$ or an $\sF_G$-$G$-space $Y$ to be special from \cite[Definitions 2.6 and 2.27]{MMO}.  As explained there, there are two notions for $\sF$-$G$-spaces, but we shall only consider the one that gives the correct input for the construction of genuine $G$-spectra.  We first recall the definitions of the Segal maps from \cite[Definitions 2.3 and 2.26]{MMO}, which give more details.  

\begin{defn}\mylabel{RRG} Let $A$ be a based $G$-space. Define  a $\PI$-$G$-space $\bR A$ by letting
its $n$th $G$-space be $A^n =\sT_G(\mathbf{n}, A)$.  Define a $\PI_G$-$G$-space $\bR_G A$ by letting its $\al$th $G$-space be $A^{\al} =   \sT_G(\al,A)$, where $\al$ denotes the based $G$-set $\bf n$ with $G$-action specified by $\al\colon G\rtarr \SI_n$.  

For a $\PI$-$G$-space $X$, define the Segal map $\de\colon X\rtarr \bR X_1$ of $\PI$-$G$-spaces by letting its $n$th $G$-map $\de\colon X_n\rtarr X_1^n$ have coordinates $\de_i$  induced by the $n$ projections $\de_i\colon {\bf n}\rtarr {\bf 1}$, where $\de_i$ sends $i$ to 1 and all other $j$ to $0$. As in \myref{PIvsF}, its target is only a $\PI$-$G$-space even when $X$ is an $\sF$-$G$-space.  

Similarly, for a $\PI_G$-$G$-space $Y$, define the Segal map $\de\colon Y\rtarr \bR_G Y_1$ of 
$\PI_G$-$G$-spaces by letting its $\al$th $G$-map $\de\colon Y(\al) \rtarr \sT_G(\al, Y_1)$ have 
coordinates induced by the projections $\de_i$.  Here, as explained in \cite[Definitions 2.26]{MMO}, 
the $\de_i$ are not $G$-maps, but they nevertheless give the coordinates of the $G$-map $\de$. 
As in \myref{PIGvsFG},  the target is only a $\PI_G$-$G$-space, even when $Y$ is an $\sF_G$-$G$-space.
\end{defn}

\begin{defn}\mylabel{FGspecial} Let $X$ be an $\sF$-$G$-space and $Y$ be an $\sF_G$-$G$-space.
\begin{enumerate}[(i)]
\item $X$ is \emph{\gen-special} if  $\de\colon X\rtarr \bR X_1$ is an \gen-level equivalence of $\PI$-$G$-spaces. 
\item $Y$ is \emph{special} if $\de\colon Y \rtarr \bR_G Y_1$ is a level $G$-equivalence of $\PI_G$-$G$-spaces.
\end{enumerate}
\end{defn}

We note that for $G=e$ an \gen-level equivalence is just a level equivalence, so that for a nonequivariant $\sF$-space we recover the classical definition of 
special as meaning that the maps $X_n\rtarr X_1^n$ are equivalences.  Again by \cite[Theorem 2.30]{MMO}, we have the following companion to \myref{equiv}.

\begin{thm}\mylabel{equivtoo}
An $\sF$-$G$-space $X$ is \gen-special if and only if $\bP_{\sF}^{\sF_G}X$ is special. 
\end{thm}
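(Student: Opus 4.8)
The plan is to deduce this from Theorem \ref{equiv} together with the fact that the prolongation functor $\bP_{\sF}^{\sF_G}$ is compatible with the formation of the Segal-map targets $\bR(-)$ and $\bR_G(-)$. First I would observe that, by Proposition \ref{ProlongEquiv}, the functor $\bP_{\sF}^{\sF_G}$ is an equivalence of categories, hence in particular preserves all objects and maps up to the natural isomorphisms of the adjunction; and that it sends the $\PI$-$G$-space underlying $X$ to the $\PI_G$-$G$-space underlying $\bP_{\sF}^{\sF_G}X$, via the analogous prolongation $\bP_{\PI}^{\PI_G}$ of \eqref{ProlongPairPI} (this compatibility of the two prolongations is part of what is recorded in \cite[Theorem 2.30]{MMO}). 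In particular $(\bP_{\sF}^{\sF_G}X)_1 \iso X_1$ as based $G$-spaces, since evaluation at $\mathbf{1}$ is unchanged under prolongation along $i\colon \sF\rtarr\sF_G$ (the object $\mathbf{1}$, with trivial $G$-action, lies in the image of $i$, and $\mathbf{1}$ is terminal among the relevant one-point indexing objects).

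The key step is then to identify $\bP_{\PI}^{\PI_G}(\bR A)$ with $\bR_G A$ naturally in the based $G$-space $A$. Here I would argue as follows: $\bR A$ is, by Definition \ref{RRG}, the $\PI$-$G$-space $\mathbf{n}\mapsto \sT_G(\mathbf{n},A) = A^n$ with its permutation action, and its prolongation along $i$ is computed as a left Kan extension. One checks that the left Kan extension of $\bR A$ along $\PI\rtarr\PI_G$ evaluated at a finite $G$-set $\al = (\mathbf{n},\al)$ is $\sT_G(\al,A) = A^\al$, i.e.\ precisely $\bR_G A$. The cleanest way to see this is to note that $\bR_G A$ restricts along $i$ to $\bR A$ (immediate from the definitions, since the underlying set of $\al$ is $\mathbf{n}$), and then to show that the resulting adjunction counit $\bP_{\PI}^{\PI_G}\bR A \rtarr \bR_G A$ is an isomorphism; this can be done by a direct cofinality/colimit computation using that every map in $\PI_G$ out of an object is, after forgetting the $G$-action, a map in $\PI$, so the relevant coend collapses. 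Moreover these identifications are compatible with the Segal maps: the diagram expressing $\bP_{\sF}^{\sF_G}(\de_X) = \de_{\bP_{\sF}^{\sF_G}X}$ under the isomorphisms $\bP_{\PI}^{\PI_G}(\bR X_1)\iso \bR_G X_1 \iso \bR_G(\bP_{\sF}^{\sF_G}X)_1$ commutes, since both are induced by the projections $\de_i$ and $\bP$ is a functor.

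Granting this, the theorem follows formally: $X$ is \gen-special iff $\de\colon X\rtarr \bR X_1$ is an \gen-level equivalence of $\PI$-$G$-spaces, iff (by Theorem \ref{equiv}, applied to the map $\de$ of $\sF$-$G$-spaces — or rather its $\PI$-level analogue) $\bP_{\PI}^{\PI_G}(\de)$ is a level $G$-equivalence of $\PI_G$-$G$-spaces, iff under the identifications above $\de\colon \bP_{\sF}^{\sF_G}X \rtarr \bR_G(\bP_{\sF}^{\sF_G}X)_1$ is a level $G$-equivalence, iff $\bP_{\sF}^{\sF_G}X$ is special. I expect the main obstacle to be the second step, namely pinning down the natural isomorphism $\bP_{\PI}^{\PI_G}\bR A\iso \bR_G A$ and its compatibility with Segal maps with enough care that the enriched/based structure is respected; the logical skeleton, once that compatibility is in hand, is a routine chain of "if and only if"s driven by Theorem \ref{equiv}. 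It is worth noting that this is essentially the content of \cite[Theorem 2.30]{MMO}, so in practice one may simply cite that result for the identification and present the deduction as above.
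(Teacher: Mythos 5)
Your proposal is sound, and it is worth noting that the paper itself gives \emph{no} proof of Theorem \myref{equivtoo}: it simply writes ``Again by \cite[Theorem 2.30]{MMO}'' and moves on, exactly as you anticipate in your closing remark. What you have written is essentially the argument underlying that citation.

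Two small observations on how you could streamline it. First, the key identification $\bP_\PI^{\PI_G}(\bR A)\iso\bR_G A$ does not require the ``direct cofinality/colimit computation'' you sketch: since $\bU^{\PI_G}_\PI\bR_G A=\bR A$ by inspection of \myref{RRG} and, by \myref{ProlongEquiv}, the pair $(\bP_\PI^{\PI_G},\bU^{\PI_G}_\PI)$ is an equivalence of categories, the counit $\bP_\PI^{\PI_G}\bU^{\PI_G}_\PI\bR_G A\rtarr\bR_G A$ is automatically an isomorphism. Second, for the same reason, $(\bP_\sF^{\sF_G}X)_1\iso X_1$ falls out of the unit isomorphism $X\iso\bU_\sF^{\sF_G}\bP_\sF^{\sF_G}X$ evaluated at $\mathbf 1$; one does not need a separate terminality argument. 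You are right to flag that the application of \myref{equiv} must be at the $\PI$/$\PI_G$ level (since the target $\bR X_1$ of the Segal map is only a $\PI$-$G$-space), and indeed the proof of \cite[Theorem 2.30]{MMO} is formulated exactly at that level, as \myref{Nfails} of the present paper recalls. The compatibility of the Segal maps with the identifications is the routine naturality check you say it is, though a fully careful treatment should verify that the non-equivariant coordinates $\de_i$ package correctly (the point addressed in \cite[Definitions 2.26]{MMO} and flagged in \myref{RRG}). With those polishings, your chain of equivalences is exactly the intended argument.
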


\begin{defn}\mylabel{Segma} An \emph{(equivariant) Segal machine} is a functor $\shim$ from $\sF$-$G$-spaces $X$ to orthogonal $G$-spectra such that if $X$ is \gen-special,
then 
\begin{enumerate}[(i)]
\item $\shim X$ is a positive $\OM$-$G$-spectrum and
\item if $V^G\neq 0$, there is a natural group completion $X_1\rtarr \OM^V(\shim X)(S^V)$.
\end{enumerate}
\end{defn}

Via the results above, an equivalent definition is that a Segal machine is a functor $\shim$ from $\sF_G$-$G$-spaces $Y$ to orthogonal $G$-spectra such that if $Y$ is special,
then 
\begin{enumerate}[(i)]
\item $\shim Y$ is a positive $\OM$-$G$-spectrum and
\item if $V^G\neq 0$, there is a natural group completion $Y_1\rtarr \OM^V(\shim Y)(S^V)$.
\end{enumerate}
Given a machine $\bS_G$ on $\sF$-$G$-spaces, $\bS_G\com \bU^{\sF_G}_{\sF}$ gives the equivalent machine on $\sF_G$-$G$-spaces.  
Given a machine $\bS_G$ on $\sF_G$-$G$-spaces, $\bS_G\com \bP^{\sF_G}_{\sF}$ gives the equivalent machine on $\sF_G$-$G$-spaces. 

The original non-equivariant Segal machine was introduced by Segal in \cite{Seg}. An equivariant version of the Segal machine was constructed in  \cite{Shim, MMO}. 
Our goal in this paper is to construct equivalent machines with better properties.

\section{The invariance theorem}\label{inv}
All variants of the Segal machine treated here start with $\sF$-$G$-spaces or $\sF_G$-$G$-spaces,
prolong them to functors defined on based $G$-CW complexes $A$, and then restrict $A$ to spheres $S^V$ to obtain orthogonal $G$-spectra. 
To compare machines, we show that prolongation preserves equivalences under suitable hypotheses.  

\subsection{Statement of the invariance theorem}  
We recall the details of the prolongation functor from \cite[\S2.3]{MMO} and state the invariance theorem.

\begin{defn}\label{DefWGSpace}
Let $\sW_G$ be the full subcategory of $\sT_G$ of based $G$-CW complexes, and define a \emph{$\sW_G$-$G$-space}
to be a $G\sU_*$-functor $X\colon \sW_G\rtarr \sT_G$.
\end{defn}

\begin{con}\mylabel{wgspaces}
The $G$-spaces $X(S^V)$ of a $\sW_G$-$G$-space specify an orthogonal $G$-spectrum. 
Identifying $S^V\sma S^W$ with $S^{V\oplus W}$,  its structure $G$-maps 
$$X(S^V)\sma S^W \rtarr X(S^{V\oplus W})$$  
are adjoint to the composites
\[ \xymatrix@1{S^W \ar[r] & \sT_G(S^V,S^{V\oplus W}) \ar[r]^-{X} & \sT_G(X(S^V),X(S^{V\oplus W})),} \]
where the first map is adjoint to the identity map of $S^{V\oplus W}$.
\end{con}

We define the prolongation functor $\bP_{\sF}^{\sW_G}$ from $\sF$-$G$-spaces to $\sW_G$-$G$-spaces to be the left adjoint of the functor induced by the inclusion $\sF \rtarr \sW_G$. For a $G$-CW-complex $A$, its evaluation at $A$ is given by the categorical tensor product
\[ (\bP_{\sF}^{\sW_G} X)(A) = A^\bullet\otimes_\sF X,\]
where $A^\bullet$ is the functor $\sF^{op}\rtarr \sT_G$ that sends $\mathbf{n}$ to the based $G$-space  $A^n = \sT_G(\mathbf{n},A).$  A detailed point-set topological level discussion of the prolongation functor is given in \cite[Appendix B]{Schwede2}. We have a factorization of prolongation functors as in the following diagram.  This factorization (up to canonical natural isomorphism) of left adjoints is immediate since the 
corresponding factorization of their right adjoint forgetful functors is evident.

\[ \xymatrix @R=2ex{
 & \text{$\sF_G$-$G$-spaces} \ar[dr]^{\bP_{\sF_G}^{\sW_G}}  & \\
\text{$\sF$-$G$-spaces}  \ar[ur]^{\bP_{\sF}^{\sF_G}} \ar@/_0ex/[rr]_{\bP_{\sF}^{\sW_G}} & &
\text{$\sW_G$-$G$-spaces}
}\]

As recalled in \myref{ProlongEquiv}, the functor $\bP_{\sF}^{\sF_G}$ on the left is an equivalence of categories.
However, the prolongation functor $\bP_{\sF_G}^{\sW_G}$ does not preserve level $G$-equivalences in general, hence we cannot expect the composite functor $\bP_{\sF}^{\sW_G}$ to take \gen-level equivalences of $\sF$-$G$-spaces to level $G$-equivalences of $\sW_G$-$G$-spaces in general. We will prove that such a homotopical invariance theorem does hold if we  impose the following ``cofibrancy" condition on $\sF$-$G$-spaces.  A simplicial $G$-space is a simplicial object in $G$-spaces; equivalently, it is a  $G$-object in simplicial spaces. It was shown in \cite[\S1.2]{MMO}) that a simplicial $G$-space $X_\sbt$ is Reedy cofibrant if all of its degeneracy maps $s_i \colon X_n \rtarr X_{n+1}$ are $G$-cofibrations.

\begin{defn}\mylabel{cof}
An $\sF$-$G$-space $X$ is \emph{proper} if for any based simplicial $G$-set $A_\sbt$ the simplicial $G$-space $(\bP_{\sF}^{\sW_G} X)(A_\sbt)$ is Reedy cofibrant.  An $\sF_G$-$G$-space $Y$ is \emph{proper} if for any simplicial $G$-set $A_\sbt$ the simplicial $G$-space $(\bP_{\sF_G}^{\sW_G} Y)(A_\sbt)$ is Reedy cofibrant.  Observe that $X$ is proper if and only if $\bP_{\sF}^{\sF_G} X$ is proper. 
\end{defn} 

This notion of a proper $\sF$-$G$-space (or $\sF_G$-$G$-space)  is new and may look strange at first sight.\footnote{We owe the idea for this notion to Stefan Schwede.}  Recall that all simplicial sets are Reedy cofibrant in the standard Quillen model structure.  
Since cofibrations are precisely monomorphisms, the same is true for 
simplicial $G$-sets with the model structure given by requiring a map $f\colon K\rtarr L$ of simplicial $G$-sets to be a weak equivalence or fibration if each fixed point map $f^H$ is a weak equivalence or fibration for all subgroups $H\subset G$; see \cite[Proposition~2.16]{Steph}.

\begin{rem}\mylabel{Reedy}
In \cite[Definition 9.10]{MMO} we said that a $\sW_G$-$G$-space $Z$, such as $\bP_{\sF}^{\sW_G} X$ or $\bP_{\sF_G}^{\sW_G} Y$, preserves Reedy cofibrancy if for every 
simplicial $G$-CW-complex $A_\sbt$, the simplicial $G$-space $Z(A_\sbt)$ is Reedy cofibrant.   Clearly $X$ is proper if $\bP_{\sF}^{\sW_G} X$ preserves Reedy cofibrancy.
\end{rem}

We can now state the invariance theorem.

\begin{thm}[Invariance theorem]\mylabel{inv1}
Let $f\colon X\rtarr X'$ be an \gen-level equivalence of proper $\sF$-$G$-spaces. Then the induced map 
$$\bP_{\sF}^{\sW_G} f\colon (\bP_{\sF}^{\sW_G} X)(A)  \rtarr (\bP_{\sF}^{\sW_G} X')(A)$$
is a $G$-equivalence for all based $G$-CW complexes $A$. 
\end{thm}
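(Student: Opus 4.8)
The plan is to reduce the statement to a check on each simplicial level of the bar-type filtration of the categorical tensor product $A^\bullet \otimes_{\sF} X$, exploiting the properness hypothesis to upgrade level equivalences to realization-level equivalences. First I would recall that, for a based $G$-CW complex $A$, the prolongation $(\bP_{\sF}^{\sW_G} X)(A)$ can be computed as the geometric realization of a simplicial $G$-space built from $A^\bullet$ and $X$: in simplicial degree $q$ this is a wedge, over tuples of morphisms in $\sF$, of smash products of copies of the $G$-spaces $A^{n}$ with the values $X_n$. Since $A$ is a $G$-CW complex, we may model it (up to $G$-homotopy equivalence, hence harmlessly) by a based simplicial $G$-set $A_\sbt$, so that $(\bP_{\sF}^{\sW_G} X)(A_\sbt)$ is a bisimplicial $G$-space; by the properness hypothesis it is Reedy cofibrant in the simplicial direction coming from $A_\sbt$, and the bar-construction direction is Reedy cofibrant for the usual reasons (free/degenerate decomposition of the inserted morphism data).

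Next I would argue levelwise. The key point is that an \gen-level equivalence $f\colon X\rtarr X'$ induces, on each simplicial level of the bar construction, a $G$-equivalence. This is where the subtlety of the equivariant fixed-point bookkeeping enters: the $G$-space appearing in degree $q$ involves $A^{n}$ smashed with $X_n$, and the relevant fixed points are indexed not by subgroups of $G$ alone but by the graph subgroups $\LA \in \bF_n$ — exactly the indexing set built into Definition~\ref{equivalences}(i). The reason \gen-level equivalence (rather than a naive level $G$-equivalence) is the correct hypothesis is that $A^{n} = \sT_G(\mathbf{n}, A)$ carries a $G\times\SI_n$-action, and its $H$-fixed points for $H\subset G$ decompose, via the orbits of $H$ on $\mathbf n$, into pieces each of which is a fixed-point space $X_m^{\LA}$ for some graph subgroup $\LA$. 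So I would make the combinatorial identification of $(A^{n}\sma X_n)^H$ in terms of the $\bF_n$-fixed points of $X$, conclude from this that $f$ induces a weak equivalence on $H$-fixed points of each smash summand, and hence a $G$-equivalence on each bisimplicial $G$-space level.

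Then I would invoke the standard fact that a map of Reedy cofibrant simplicial $G$-spaces which is a $G$-equivalence in each degree induces a $G$-equivalence on geometric realizations (this is essentially \cite{MMO}-style machinery, ultimately the equivariant version of the realization lemma, applied fixed-point-space by fixed-point-space since realization commutes with fixed points for $G$-spaces). Properness is precisely what guarantees the Reedy cofibrancy needed to apply this. Combining: $f$ induces a $G$-equivalence of bisimplicial $G$-spaces, realizing in one direction gives a map of Reedy cofibrant simplicial $G$-spaces that is a levelwise $G$-equivalence, and realizing again gives the desired $G$-equivalence $(\bP_{\sF}^{\sW_G} X)(A) \rtarr (\bP_{\sF}^{\sW_G} X')(A)$.

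The main obstacle I expect is the combinatorial fixed-point analysis in the second step: identifying the $H$-fixed points of the degree-$q$ term of the bar construction and matching them correctly against the graph-subgroup-indexed fixed points $X_m^{\LA}$, $\LA\in\bF_m$. One has to track how an element of $H\subset G$ acts both on the set $\mathbf{n}$ (through the maps in $\sF$, which are $G$-trivial, so here trivially) and on $A^{n}$ by permuting coordinates; the genuine content is that stabilizers of points in $A^{n}$ under $G$ are exactly graph subgroups, so that $(A^{n})^H$ is a disjoint union of spaces of the form $A^{H_j}$ for various subgroups $H_j$, and smashing with $X_n$ and taking $H$-fixed points then forces the appearance of $X_n^{\LA}$ for graph subgroups $\LA$ that intersect $\SI_n$ trivially — which is exactly the content of \gen-level equivalence. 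Getting this bookkeeping precisely right, uniformly across all simplicial levels and compatibly with the simplicial structure maps, is the delicate part; the realization step afterwards is routine given properness.
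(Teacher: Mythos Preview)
Your high-level skeleton---model $A$ by a simplicial $G$-set, prove the map is a levelwise $G$-equivalence, then invoke properness and the realization lemma---matches the paper's approach. But the execution of the levelwise step contains a genuine error.

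The prolongation $(\bP_{\sF}^{\sW_G} X)(A) = A^\bullet\otimes_{\sF} X$ is a \emph{coequalizer}, not a bar construction. Your description of ``simplicial degree $q$'' as ``a wedge, over tuples of morphisms in $\sF$, of smash products of $A^n$ with $X_n$'' is describing $B_q(A^\bullet,\bF^{\bN},X)$, which is a different object; the augmentation $\epz$ from that bar construction to the tensor product is not an equivalence in general, and there is no ``bar-construction direction'' hiding inside the prolongation. So there is no bisimplicial object of the shape you describe. Worse, the fixed-point analysis you sketch for that bar construction would not produce graph-subgroup fixed points: since $G$ acts trivially on $\sF$, one has $(A^{n}\sma \sF(\ul{\mathbf m})\sma X_{n_0})^H = (A^H)^{n}\sma \sF(\ul{\mathbf m})\sma X_{n_0}^H$, and only ordinary $H$-fixed points of $X$ appear. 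This is exactly the phenomenon behind \myref{why-not-N}: the $\bN$-indexed bar construction does not see the \gen-structure.

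The paper's route to the levelwise step is different and is where the graph subgroups actually enter. Writing $A=|K_\sbt|$ with $K_\sbt$ a simplicial $G$-set, one has $(\bP_{\sF}^{\sW_G} X)(A)\cong |(\bP_{\sF}^{\sW_G} X)(K_\sbt)|$, a \emph{single} simplicial direction whose $n$th term is the prolongation at the discrete $G$-set $K_n$. For a finite based $G$-set $\al$, Yoneda collapses $\al^\bullet\otimes_{\sF} X$ to $X_{|\al|}$ with the $G$-action twisted by $\al$; the $H$-fixed points of this twisted action are exactly $X_{|\al|}^{\LA}$ for the graph subgroup $\LA=\{(h,\al(h)):h\in H\}$. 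This identification is the content of \myref{equiv}, which the paper then extends to infinite $G$-sets by writing them as ordered colimits of finite ones (\myref{InvarGSet}). Once the levelwise $G$-equivalence is established this way, properness gives Reedy cofibrancy and the realization lemma finishes the argument. Your instinct that the graph-subgroup bookkeeping is the crux is correct, but it takes place in the coequalizer (via the twisted action on $X_n$), not in an intermediate bar construction.
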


By \myref{equiv} and the observation at the end of \myref{cof}, the invariance theorem admits the following equivalent reinterpretation.

\begin{thm}[Invariance theorem]\mylabel{inv2}
Let $f\colon Y\rtarr Y'$ be a level equivalence of proper $\sF_G$-$G$-spaces. Then the induced map 
$$ \bP_{\sF_G}^{\sW_G} f\colon (\bP_{\sF_G}^{\sW_G}Y)(A)  \rtarr (\bP_{\sF_G}^{\sW_G}Y')(A)$$
is a $G$-equivalence for all based $G$-CW complexes $A$. 
\end{thm}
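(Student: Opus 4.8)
The plan is to prove the invariance theorem in the form of \myref{inv1}, since \myref{inv2} follows immediately via the equivalence of categories $\bP_{\sF}^{\sF_G}$ (\myref{ProlongEquiv}), the translation of equivalences in \myref{equiv}, and the observation at the end of \myref{cof} that properness is preserved. So let $f\colon X\rtarr X'$ be an \gen-level equivalence of proper $\sF$-$G$-spaces and fix a based $G$-CW complex $A$.

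The first step is to reduce the statement about a general $G$-CW complex $A$ to a statement about simplicial $G$-sets. Using the standard $G$-CW structure, $A$ is the geometric realization $|A_\sbt|$ of a based simplicial $G$-set $A_\sbt$ (up to $G$-homeomorphism after a cell-by-cell simplicial approximation, or simply by working with $G$-simplicial complexes). Since $\bP_{\sF}^{\sW_G}X$ is built by the categorical tensor product $A^\bullet \otimes_\sF X$ and the functor $A\mapsto A^\bullet$ commutes with geometric realization in each variable, one gets $(\bP_{\sF}^{\sW_G}X)(|A_\sbt|)\iso |(\bP_{\sF}^{\sW_G}X)(A_\sbt)|$, the realization of the simplicial $G$-space obtained by applying the prolongation levelwise. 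By the properness hypothesis, both $(\bP_{\sF}^{\sW_G}X)(A_\sbt)$ and $(\bP_{\sF}^{\sW_G}X')(A_\sbt)$ are Reedy cofibrant simplicial $G$-spaces, so geometric realization preserves levelwise $G$-equivalences between them. Thus it suffices to show that for each simplicial degree $q$, the map $(\bP_{\sF}^{\sW_G}f)(A_q)$ is a $G$-equivalence, i.e.\ to prove the theorem when $A$ is a based $G$-set (a wedge of $G$-orbits $G/H_+$).

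The second, and main, step is the case $A = G/H_+$. Here the key input is Schwede's suggested idea: we use the equivalence with $\sF_G$-$G$-spaces to reduce to a calculation one can actually do. By \myref{equiv}, $\bP_{\sF}^{\sF_G}f$ is a level $G$-equivalence of $\sF_G$-$G$-spaces, and by the factorization $\bP_{\sF}^{\sW_G} = \bP_{\sF_G}^{\sW_G}\com\bP_{\sF}^{\sF_G}$ it is enough to analyze $\bP_{\sF_G}^{\sW_G}$ on $G/H_+$. Now $(\bP_{\sF_G}^{\sW_G}Y)(G/H_+) = (G/H_+)^\bullet \otimes_{\sF_G} Y$, and the point is that $(G/H_+)^\bullet$, the functor $\sF_G^{op}\rtarr \sT_G$ sending a finite $G$-set $\al$ to $\sT_G(\al, G/H_+)$, is a representable-type object: mapping a finite $G$-set into $G/H_+$ decomposes $\al$ according to which orbits land where. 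One shows that the coend $(G/H_+)^\bullet \otimes_{\sF_G} Y$ is $G$-homeomorphic (or at least $G$-equivalent, using properness to handle the bar-construction/coend) to a value $Y(\beta)$ for an appropriate finite $G$-set $\beta$ associated to $H$ — in the simplest instances to $Y(G/H_+)$ itself — so that $\bP_{\sF_G}^{\sW_G}f$ on $G/H_+$ is, up to equivalence, the level map $f_\beta\colon Y(\beta)\rtarr Y'(\beta)$, which is a $G$-equivalence by hypothesis. Making this identification precise, and checking the coend is computed correctly (i.e.\ that the relevant tensor is homotopically meaningful, which is exactly where properness/Reedy cofibrancy enters), is the heart of the argument.

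I expect the main obstacle to be precisely this identification of the coend $(G/H_+)^\bullet \otimes_{\sF_G} Y$ in terms of the values of $Y$. Two subtleties must be handled carefully: first, the coend over $\sF_G$ is not just a colimit over a discrete category but involves the $G$-action by conjugation on the morphism $G$-sets of $\sF_G$, so one must track the $G$-equivariance throughout and make sure the fixed points $\left((G/H_+)^\bullet\otimes_{\sF_G}Y\right)^K$ are computed correctly for every subgroup $K\subset G$; second, although the naive pointwise left Kan extension along $\sF_G\rtarr\sW_G$ may not be homotopically well-behaved in general (this is exactly why the na\"ive Segal machine fails equivariantly, cf.\ \myref{quibble} and the discussion in the introduction), the properness assumption is designed to guarantee that, after realization of the simplicial resolution, the relevant coend does have the expected homotopy type. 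The remaining steps — gluing the $G/H_+$ case over wedges to get arbitrary based $G$-sets, then over simplicial degrees to get arbitrary $G$-CW complexes, invoking Reedy cofibrancy for the realization — are routine once the orbit case is in hand.
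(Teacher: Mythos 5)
Your overall strategy matches the paper's: reduce to realizations of simplicial $G$-sets, use properness (Reedy cofibrancy) to work one simplicial level at a time, and then note that at a finite $G$-set $\al$ the prolongation $(\bP_{\sF}^{\sW_G}X)(\al)$ is, by co-Yoneda along $\sF_G\hookrightarrow\sW_G$, just $(\bP_{\sF}^{\sF_G}X)(\al)$, where the level $G$-equivalence hypothesis applies directly (and this is an isomorphism, not merely an equivalence — your hedge ``or at least $G$-equivalent, using properness'' is not needed and slightly misconstrues what properness is for). The step you wave off as routine — ``gluing the $G/H_+$ case over wedges to get arbitrary based $G$-sets'' — is where the paper does real work, and it cannot be skipped: the simplicial degrees of the relevant simplicial $G$-sets (e.g.\ $S_\sbt S^V$) are \emph{infinite} $G$-sets, not finite wedges of orbits. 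The paper's Lemma~\myref{InvarGSet} writes a based $G$-set as a well-ordered colimit of finite sub-wedges along retract (hence closed) inclusions, notes that this closed-inclusion property is preserved by prolongation, and then uses that $H$-fixed points commute with wedges and ordered colimits and that homotopy groups commute with ordered colimits of closed inclusions. Also, the reduction to realizations of simplicial $G$-sets should go through the singular complex: a general $G$-CW complex is not of the form $|A_\sbt|$ for a simplicial $G$-set, and ``simplicial approximation'' does not change that; the paper instead observes that $(S_\sbt A)^H = S_\sbt(A^H)$ and $|K_\sbt|^H=|K_\sbt^H|$, so $\epz\colon|S_\sbt A|\to A$ is a $G$-homotopy equivalence for $G$-CW $A$, whence $\bP_{\sF}^{\sW_G}X(\epz)$ is one too. (Your fallback to $G$-simplicial complexes does handle $A=S^V$ via Illman's triangulation theorem, but not the theorem as stated.)

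Your diagnosis of the likely pitfalls is off target in two places. Properness is not there to make the coend at an orbit homotopically meaningful — at a finite $G$-set that coend is an honest isomorphism and no homotopy theory enters; properness is used only at the very end to ensure that geometric realization preserves the levelwise $G$-equivalence. And the failure of the naive equivariant Segal machine is unrelated to \myref{quibble} (a basepoint-nondegeneracy point) and to the conjugation $G$-action on the morphism $G$-sets of $\sF_G$. As recalled in \myref{why-not-N}, the problem is that the standard contracting homotopy for $\epz\colon B(\bF^{\bN},\bF^{\bN},X)\to X$ is a level $G$-equivalence but is not $\SI$-equivariant, so after prolonging to $\sF_G$-$G$-spaces it is no longer a level $G$-equivalence, and the invariance theorem — which is perfectly fine — simply has nothing to latch onto.
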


\begin{rem}
Note that when $G=e$, an \gen-level equivalence of $\sF$-spaces is just a level equivalence. Thus, in the nonequivariant case, the invariance theorem says that prolongation preserves level equivalences between proper $\sF$-spaces.
\end{rem}

\subsection{Proof of the invariance theorem}
We make use of the classical adjunction $(|-|, S_\sbt)$ between geometric realization of simplicial sets and the total singular complex functor. 
Let $G$ act trivially on the topological simplices $\DE^n$.  If $A$ is a based $G$-space, then $S_\sbt A$ is a based $G$-simplicial set with $G$ acting on simplices via the action of $G$ on $A$.   Visibly $(S_\sbt A)^H = S_\sbt (A^H)$; a simplex $f\colon \DE^n \rtarr A$ is fixed by $H$  if and only it takes 
values in $A^H$.  Similarly, for a simplicial $G$-set $K_\sbt$,  we have $(|K_\sbt |)^H = |K_\sbt^H|$.  Thus the natural $G$-map
$\epz\colon |S_\sbt A| \rtarr A$ restricts on $H$-fixed points to the standard weak equivalence $|S_\sbt A^H|\rtarr A^H$.  Moreover, just as nonequivariantly, $|K_\sbt |$ is a $G$-CW complex.  Choosing one $H$ in each conjugacy class of subgroups of $G$, it has one cell of type $G/H \times D^n$ for each nondegenerate $n$-simplex with isotropy group $H$.  If $A$ is a $G$-CW complex,
$\epz$ is therefore a $G$-homotopy equivalence.   This implies the following lemma.

\begin{lem} For any $\sF$-$G$-space $X$, the map $\epz\colon |S_\sbt A| \rtarr A$ induces a natural $G$-homotopy equivalence 
\[\bP_{\sF}^{\sW_G}X( \epz) \colon |S_\sbt A|^\bullet\otimes_\sF X \to A^\bullet \otimes_\sF X.\]
\end{lem}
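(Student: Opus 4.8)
The plan is to reduce the statement to a known homotopical property of the categorical tensor product $(-)\otimes_\sF X$, using the fact that $\epz\colon |S_\sbt A|\rtarr A$ is a $G$-homotopy equivalence when $A$ is a $G$-CW complex. First I would recall from the preceding paragraph that $\epz$ is a $G$-homotopy equivalence, so in particular it has a $G$-homotopy inverse $\eta\colon A\rtarr |S_\sbt A|$ with $G$-homotopies $\eta\epz\simeq \mathrm{id}$ and $\epz\eta\simeq\mathrm{id}$ through $G$-maps. Applying the contravariant functor $(-)^\bullet\colon \sT_G^{op}\rtarr (\text{functors }\sF^{op}\rtarr \sT_G)$ — which is given levelwise by $B^\bullet_n=\sT_G(\mathbf n,B)=B^n$ and hence sends $G$-maps to $G$-maps and respects $G$-homotopies coordinatewise, since a $G$-homotopy $B\times I\rtarr C$ induces $(B\times I)^n=B^n\times I\rtarr C^n$ — we obtain maps $\epz^\bullet\colon A^\bullet\rtarr |S_\sbt A|^\bullet$ and $\eta^\bullet$ of functors $\sF^{op}\rtarr \sT_G$ that are mutually inverse $G$-homotopy equivalences through natural $G$-homotopies.

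Next I would push this through the coend. The assignment $B\mapsto B^\bullet\otimes_\sF X$ is functorial in the $\sF^{op}$-$G$-space $B^\bullet$, and it sends $G$-homotopies of maps of $\sF^{op}$-$G$-spaces to $G$-homotopies of the resulting $G$-maps on coends: a $G$-homotopy is just a map out of $(-)\times I$ with $I$ having trivial $G$-action, and $I$ is finite-dimensional and the relevant constructions commute with the (cartesian) product with $I$ in the appropriate sense — more precisely, a $G$-homotopy $h$ between maps $B^\bullet\rightrightarrows C^\bullet$ of $\sF^{op}$-$G$-spaces is a map $B^\bullet\otimes I_+ \rtarr C^\bullet$ (with $I_+$ the based space with trivial $G$-action and $(B\times I)^\bullet \iso B^\bullet\wedge I_+$ coordinatewise), and applying $-\otimes_\sF X$ and using that $(B^\bullet\wedge I_+)\otimes_\sF X \iso (B^\bullet\otimes_\sF X)\times I$ gives the required homotopy of $G$-maps on tensor products. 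Therefore $\epz^\bullet\otimes_\sF X$ and $\eta^\bullet\otimes_\sF X$ are mutually $G$-homotopy-inverse, so $\bP_{\sF}^{\sW_G}X(\epz) = \epz^\bullet\otimes_\sF X \colon |S_\sbt A|^\bullet\otimes_\sF X\rtarr A^\bullet\otimes_\sF X$ is a $G$-homotopy equivalence, and naturality in $A$ is clear since every step is natural in $A$.

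The main technical point — and the only step requiring care — is the interchange $(B^\bullet \wedge I_+)\otimes_\sF X \iso (B^\bullet\otimes_\sF X)\times I$ used to transport $G$-homotopies across the coend. This is an instance of the fact that the coend $(-)\otimes_\sF X$, being a left adjoint (it is part of the prolongation adjunction), preserves colimits, combined with the fact that smashing with $I_+$ coordinatewise corresponds, after $\otimes_\sF X$, to taking the product with $I$; concretely one checks that $-\otimes_\sF X$ is a $\sT_G$-enriched (equivalently $G\sU_*$-enriched) functor in the sense that it is compatible with tensors, and tensoring a $\sW_G$-$G$-space such as $\bP_{\sF}^{\sW_G}X$ with $I_+$ is computed coordinatewise. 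Since everything is built from compactly generated weak Hausdorff $G$-spaces and the relevant enrichments are the standard ones recalled in \S\ref{sec:CatGSpaces}, this compatibility is formal, and I would simply cite it (e.g. via the enriched-functor-preserves-tensors principle, or directly from the coend formula) rather than verify it by hand. Given that, the lemma follows immediately.
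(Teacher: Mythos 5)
Your approach is the same as the paper's, which treats the implication as routine once $\epz\colon |S_\sbt A|\rtarr A$ is known to be a $G$-homotopy equivalence: the $\sW_G$-$G$-space $\bP_{\sF}^{\sW_G}X = (-)^\bullet\otimes_\sF X$ is by definition a $G\sU_*$-enriched functor $\sW_G\rtarr \sT_G$, and enriched functors preserve $G$-homotopies and hence $G$-homotopy equivalences. You have unpacked this standard enrichment argument, which is a reasonable thing to do.

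One step in your unpacking is stated incorrectly, though it is easily repaired. You write $(B\times I)^n = B^n\times I$, but $(B\times I)^n = \sT_G(\mathbf{n}, B\times I)$ is the $n$-fold product $B^n\times I^n$, not $B^n\times I$; likewise $(B\times I)^\bullet$ is not coordinatewise isomorphic to $B^\bullet\sma I_+$. What you actually need, and what does hold, is a map of contravariant $\sF$-$G$-spaces $B^\bullet\sma I_+\rtarr (B\times I)^\bullet$ given in each coordinate by the diagonal $I\rtarr I^n$; precomposing $h^\bullet$ with this map, for a $G$-homotopy $h\colon B\times I\rtarr C$, yields the desired homotopy $B^\bullet\sma I_+\rtarr C^\bullet$ of $\sF^{op}$-$G$-spaces. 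Equivalently and more cleanly, you can avoid coordinates altogether: the $G\sU_*$-enrichment gives a based continuous $G$-map $\sT_G(B,C)\rtarr \sT_G\bigl(\bP_{\sF}^{\sW_G}X(B),\bP_{\sF}^{\sW_G}X(C)\bigr)$, and a $G$-homotopy between based $G$-maps $f,g\colon B\rtarr C$ is a path from $f$ to $g$ in $\sT_G(B,C)^G = G\sT(B,C)$, which the structure map transports to a path between the images. With either fix, your proof is complete and matches the paper's (implicit) argument.
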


Therefore the invariance theorem holds if and only if its conclusion holds with $A$ replaced by $|S_\sbt A|$.   We may view
a $G$-set $A$, not necessarily finite, as a $G$-CW complex.  The following lemma gives the starting point for the proof
of the invariance theorem. 

\begin{lem}\mylabel{InvarGSet}  Let $f\colon X\rtarr X'$ be an \gen-level equivalence of $\sF$-$G$-spaces. Then 
$\bP_{\sF}^{\sW_G} f\colon (\bP_{\sF}^{\sW_G} X)(A) \rtarr (\bP_{\sF}^{\sW_G} X')(A)$ is a $G$-equivalence for any based $G$-set $A$. 
\end{lem}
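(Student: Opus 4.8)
The plan is to reduce the statement about a general $G$-set $A$ to the case of an orbit $A = G/H$, and then to identify the relevant coend very concretely. First I would observe that the prolongation coend $(\bP_{\sF}^{\sW_G}X)(A) = A^\bullet \otimes_\sF X$ takes wedges of based $G$-sets to wedges: more precisely, if $A = \bigvee_k A_k$ is a (possibly infinite) wedge, then $A^n = (\bigvee_k A_k)^n$ decomposes as a wedge indexed on functions $\mathbf{n} \rtarr \{$summands$\}$, and since $X$ is reduced ($X_0 = \ast$) only the functions that hit a single summand contribute after passing to the coend. This is the standard argument that the Segal machine on a based set is a wedge of values on the individual orbits; it works $G$-equivariantly provided one is careful that the indexing set of functions carries the evident $G$-action. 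Thus it suffices to treat $A = G/H_+$ for $H \subseteq G$, and in fact $(\bP_{\sF}^{\sW_G}X)(G/H_+)$ will be built from the $G$-spaces $X_n$ with their $G \times \SI_n$-action, bound together by the $\SI_n$-action and the $\sF$-functoriality.

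Next, I would unwind the coend $(G/H_+)^\bullet \otimes_\sF X$. We have $(G/H_+)^n = \sT_G(\mathbf n, G/H_+) = \mathrm{Map}(\mathbf n \smallsetminus 0, G/H)_+$ with basepoint the constant-at-basepoint partial function; equivalently, based functions $\mathbf n \rtarr G/H_+$. The $G$-action is by postcomposition. The point is that a based function $\mathbf n \rtarr G/H_+$ together with a point of $X_n$ determines, via the coend relations (using injections, projections, and permutations in $\PI$, plus the fold maps in $\sF$), a canonical representative supported on the non-basepoint fibers; collating this identifies $(G/H_+)^\bullet \otimes_\sF X$ with a space assembled out of the $X_n^{\LA}$ for $\LA \in \bF_n$ — precisely the subgroups of $G \times \SI_n$ meeting $\SI_n$ trivially, which are exactly the graph subgroups $\LA = \{(h, \sigma(h))\}$ coming from a homomorphism $H' \rtarr \SI_n$ on a subgroup $H' \subseteq G$, together with the conjugation data encoding how $H$ sits inside $G$. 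In other words, the $H$-set structure on the fibers of $\mathbf n \rtarr G/H_+$ is exactly the data that a subgroup in $\bF_n$ records. Then $\bP_{\sF}^{\sW_G}f$ evaluated at $G/H_+$ is, $G$-fixed-point by $G$-fixed-point (and more generally $K$-fixed points for $K \subseteq G$), built out of the maps $f_n^{\LA}\colon X_n^{\LA} \rtarr {X'}_n^{\LA}$, which are weak equivalences by hypothesis since $f$ is an \gen-level equivalence. Keeping track of the (finite, since $G$ and all $\mathbf n$ are finite) colimits and the disjointness of the pieces, one concludes that $\bP_{\sF}^{\sW_G}f$ is a $G$-equivalence on $G/H_+$, hence on every based $G$-set.

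The main obstacle I anticipate is the bookkeeping in the second step: making the identification of $(G/H_+)^\bullet \otimes_\sF X$ with an explicit wedge-like assembly of the fixed-point spaces $X_n^{\LA}$ genuinely precise, including the correct $G$-action and the matching of $\bF_n$ with the stabilizer data of based functions $\mathbf n \rtarr G/H_+$. One must check that after imposing the coend relations there is a canonical ``reduced'' form for each class (supported bijectively on the non-basepoint fibers), that distinct reduced forms land in distinct wedge summands, and that passing to $K$-fixed points commutes with this decomposition — the last point being where the condition ``$\LA$ intersects $\SI_n$ trivially'' is forced, since a based function whose value is fixed by some $\sigma \neq e$ permuting the fiber would collapse under the $\PI$-relations. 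Once this combinatorial identification is in place, the homotopical conclusion is immediate from the definition of an \gen-level equivalence together with the elementary facts that finite wedges and the relevant pushouts of $G$-spaces preserve $G$-equivalences. A secondary, milder point is the infinite-wedge reduction in the first step: one should note that a based function from the finite set $\mathbf n$ can meet only finitely many summands, so no genuinely infinite colimit obstructs the argument, and filtered colimits of $G$-equivalences along the finite subsets of the indexing set are $G$-equivalences.
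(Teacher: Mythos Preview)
Your first reduction step contains a genuine error: prolongation does \emph{not} distribute over wedges.  Take $A = \mathbf{1}\vee \mathbf{1} = \mathbf{2}$.  Since $\mathbf{2}^{\bullet} = \sF(-,\mathbf{2})$, the co-Yoneda lemma gives $(\bP_{\sF}^{\sW_G} X)(\mathbf{2}) \cong X_2$, whereas your claim would yield $X_1\vee X_1$.  For a special $X$ one has $X_2\simeq X_1\times X_1$, which is certainly not $X_1\vee X_1$.  The flaw in the argument is the assertion that a class represented by $(a,x)$ with $a\colon \mathbf{n}\rtarr A$ hitting several wedge summands can be reduced, via the coend relations, to a representative hitting a single summand.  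The relation identifies $(a,X(\phi)x')\sim (a\com\phi,x')$ for $\phi\colon \mathbf{m}\rtarr \mathbf{n}$; any $a'$ with $a = a'\com\psi$ hits at least the same summands as $a$, and passing to $a\com\phi$ with smaller image requires the original $x$ to lie in the image of $X(\phi)$, which it need not.  So the reduction to a single orbit fails at the outset.

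The paper's route is different and avoids this issue entirely.  For a \emph{finite} based $G$-set $A$ one has $A\in \sF_G$, and the factorization $\bP_{\sF}^{\sW_G}\cong \bP_{\sF_G}^{\sW_G}\com\bP_{\sF}^{\sF_G}$ together with full faithfulness of $\sF_G\subset \sW_G$ gives $(\bP_{\sF}^{\sW_G} X)(A)\cong (\bP_{\sF}^{\sF_G} X)(A)$.  That $(\bP_{\sF}^{\sF_G} f)(A)$ is a $G$-equivalence is then exactly the statement that $\bP_{\sF}^{\sF_G} f$ is a level $G$-equivalence, which is \myref{equiv}.  (Your second step is essentially a hands-on sketch of part of the proof of \myref{equiv}, so it is not misguided, but it is redundant here.)  For infinite $A$, the paper writes $A$ as a well-ordered colimit of sub-wedges of orbits along split inclusions, observes that prolongation carries these to closed inclusions, and concludes by commuting fixed points and homotopy groups past the colimit.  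The correct reduction is thus to \emph{finite} $G$-sets, where \myref{equiv} applies directly, not to single orbits.
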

\begin{proof}  By Zorn's Lemma, we may decompose the based $G$-set $A$ as a wedge of orbits $G/H_+$.
We can then order our orbits, so that $A$ is the well-ordered colimit of maps of the form $B_+ \rtarr B_+\vee G/H_+$.
Due to the presence of the disjoint basepoint, the inclusion from one term into the next is the inclusion of a retract, a property 
that is retained on application of $\bP_{\sF}^{\sW_G}X$.   
By \cite[Lemma~1.6.2]{MaySig}, it follows that each induced map $(\bP_{\sF}^{\sW_G} X)(B_+) \rtarr (\bP_{\sF}^{\sW_G} X)(B_+\vee G/H_+)$ is a closed inclusion.

Since passage to $H$-fixed points commutes with wedges and ordered colimits (\cite[Lemma~III.1.6]{MM}) and since we can commute ordered colimits and the coequalizers that define $\bP_{\sF}^{\sW_G}$, the map $\bP_{\sF}^{\sW_G} f$, evaluated at $A$, is the colimit of an ordered set of $G$-equivalences. It is therefore a $G$-equivalence since passage to homotopy groups commutes with ordered colimits of closed inclusions.  In more detail, we again use that the colimits in question commute with fixed points and we observe that applying fixed points to a closed inclusion again produces a closed inclusion. The fact that homotopy groups commute with colimits of sequences in the category $\mathbf{Top}$  of (arbitrary) topological spaces is classical \cite[Lemma 2.14]{DT}, and the classical proof readily generalizes from sequences to ordered colimits.  As all maps in the colimit system are closed inclusions, the colimit as calculated in $\mathbf{Top}$ agrees with the colimit as calculated in $\sU$ \cite[Lemma~3.3]{CGWH}.
\end{proof}

Now we use the notion of a proper $\sF$-$G$-space to complete the proof of the invariance theorem.

\begin{prop}\mylabel{InvarProp}
Let $f\colon X\to X'$ be an \gen-level equivalence of proper $\sF$-$G$-spaces and let $A=|K_{\sbt}|$ be the geometric realization 
of a   based $G$-simplicial set $K_{\sbt}$. Then $\bP_{\sF}^{\sW_G} f\colon (\bP_{\sF}^{\sW_G} X)(A)\rtarr (\bP_{\sF}^{\sW_G} X')(A) $ is a $G$-equivalence. 
\end{prop}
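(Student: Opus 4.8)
The plan is to commute the prolongation functor past geometric realization so as to reduce the statement to Lemma~\myref{InvarGSet}, and then to invoke properness in order to push the resulting levelwise equivalence through the realization.

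First I would record that $\bP_{\sF}^{\sW_G}$ commutes with geometric realization in its space variable. Recall that $(\bP_{\sF}^{\sW_G}X)(A) = A^\bullet\otimes_{\sF}X$ is the categorical tensor product, presented as a coequalizer built from smash products of the finite powers $A^n = \sT_G(\mathbf{n}, A)$ of $A$ with the fixed based $G$-spaces $X(\mathbf{n})$, with the morphisms of $\sF$ acting on powers by the evident continuous maps (projections, diagonals, and basepoint insertions). Since we work with compactly generated spaces, geometric realization of simplicial spaces preserves finite products (Milnor), and, being a left adjoint, it preserves colimits, hence coequalizers; applying these facts levelwise in $\mathbf{n}$ (using naturality in $\sF^{op}$) and then to the coequalizer presentation yields a natural isomorphism
$$(\bP_{\sF}^{\sW_G}X)(|K_\sbt|)\;\iso\;\bigl|\,[q]\mapsto (\bP_{\sF}^{\sW_G}X)(K_q)\,\bigr|,$$
where the right-hand side is the geometric realization of the simplicial $G$-space $q\mapsto (\bP_{\sF}^{\sW_G}X)(K_q)$; the analogous isomorphism holds for $X'$, naturally in $f$. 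Under this identification, $\bP_{\sF}^{\sW_G}f$ evaluated at $A=|K_\sbt|$ is the realization $|f_\sbt|$ of the map of simplicial $G$-spaces with $q$-th component $f_q = (\bP_{\sF}^{\sW_G}f)(K_q)$.

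Each $K_q$ is a based $G$-set, so Lemma~\myref{InvarGSet} shows that every $f_q$ is a $G$-equivalence; that is, $f_\sbt$ is a levelwise $G$-equivalence of simplicial $G$-spaces. Since $X$ and $X'$ are proper, the simplicial $G$-spaces $(\bP_{\sF}^{\sW_G}X)(K_\sbt)$ and $(\bP_{\sF}^{\sW_G}X')(K_\sbt)$ are Reedy cofibrant by Definition~\myref{cof}. Geometric realization carries a levelwise $G$-equivalence between Reedy cofibrant simplicial $G$-spaces to a $G$-equivalence; this is standard (compare the discussion of Reedy cofibrancy in \cite[\S1.2]{MMO}, and note that $H$-fixed points commute with realization, which reduces the claim to the classical nonequivariant fact). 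Therefore $|f_\sbt| = (\bP_{\sF}^{\sW_G}f)(|K_\sbt|)$ is a $G$-equivalence, as required.

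The step I expect to be the main obstacle is the first one: carefully justifying that the coequalizer defining $\bP_{\sF}^{\sW_G}$ genuinely commutes with geometric realization, and in particular that the resulting simplicial object is exactly $q\mapsto (\bP_{\sF}^{\sW_G}X)(K_q)$ with its evident simplicial structure. This rests on the fact that realization preserves finite products in compactly generated spaces, applied functorially over $\sF^{op}$, together with its compatibility with the coequalizer presentation of $\otimes_{\sF}$ (the relevant point-set bookkeeping is of the kind carried out in \cite[Appendix B]{Schwede2}). Once this identification is in hand, the remainder of the argument is formal.
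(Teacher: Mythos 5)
Your argument is essentially identical to the paper's: commute $\bP_{\sF}^{\sW_G}$ past $|-|$ using that realization preserves finite products and colimits, apply \myref{InvarGSet} levelwise to the based $G$-sets $K_q$, and use properness (Reedy cofibrancy) so that realization sends the resulting levelwise $G$-equivalence to a $G$-equivalence (the paper cites \cite[Theorem~1.10]{MMO} for this last step). Correct, and no meaningfully different route is taken.
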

\begin{proof}  Using  that products commute with realization and that 
the coequalizers that define $\bP_{\sF}^{\sW_G}$ commute
with the colimits that define $|-|$, we see that 
\[ A^\bullet\otimes_\sF X \iso  | (\bP_{\sF}^{\sW_G} X)(K_\sbt) |,\] and similarly for $X'$.  By \myref{InvarGSet}, $(\bP_{\sF}^{\sW_G} f)(A)$ is the realization of a level $G$-equivalence of simplicial $G$-spaces, and by assumption $(\bP_{\sF}^{\sW_G} X)(K_\sbt)$ and $(\bP_{\sF}^{\sW_G} X')(K_\sbt)$ are Reedy cofibrant simplicial $G$-spaces. Therefore,  $(\bP_{\sF}^{\sW_G} f)(A)$  is a $G$-equivalence \cite[Theorem~1.10]{MMO}. 
\end{proof}

\begin{rem}  We only need the case $A=S^V$.  As a smooth $G$-manifold,
$S^V$ is triangulable as a countable $G$-simplicial complex \cite{Illman}.
Restricting to this case, we only need countable colimits in a variant of the argument above.  Schwede \cite[Proposition B.48]{Schwede2} gives an analogous invariance theorem that restricts attention 
to finite $G$-CW complexes $A$ and bypasses the use of infinite colimits.
\end{rem}

\section{Variants of the two-sided bar construction}\label{variants}

All of our variants of the Segal machine are constructed by prolonging either  $\sF$-$G$-spaces or $\sF_G$-$G$-spaces given by two-sided bar constructions  to $\sW_G$-$G$-spaces and then restricting to $G$-spheres to obtain (orthogonal) $G$-spectra.  We focus on the relevant bar constructions in this section.   We first recall the general definitions and then specialize to the examples of interest.

\subsection{The general monadic bar construction $B(\bY,\bE,X)$}\label{formal}

We assume given a closed symmetric monoidal category $\sV$ with product $\otimes$ and internal hom objects $\ul{\sV}(V,W)$.
 We are thinking of $(\sV,\otimes)$ as either $(G\sU,\times)$ or $(G\sU_*,\sma)$, and then we are thinking of $\sU_G$ 
and ${\sU_G}_{*}$ as giving the hom objects of the closed structures on $(G\sU,\times)$ and $(G\sU_{\ast}, \sma)$.  We also assume 
given a small $\sV$-category $\sE$.  We are thinking of $\sE$ as either $\sF$ or $\sF_G$.  With these examples in mind, we shall be a little imprecise about categorical details in order to avoid excessive pedantry.    

In decades of previous work, and especially in the prequel \cite{MMO}, it is emphasized that there is a categorical two-sided bar construction that is defined when $\otimes = \times$ (as in \cite [\S12]{MayClass}) and a monadic two-sided bar construction that is defined in general (as in \cite[\S9]{MayGeo}).  It is essential to our new examples that we work monadically.  In this section, we recall the definition of  the monadic bar construction and show that the categorical bar construction is a special case. We then generalize this special case to obtain the monadic bar constructions of interest in this paper.   In our examples, $\sW$ in the following definition will be the category $\Fun (\PS,\sV)$ of $\sV$-functors $\PS\rtarr \sV$ for some $\sV$-subcategory $\PS$ of $\sE$.\footnote{More generally, $\sV$ could be replaced by an appropriate $\sV$-category $\sM$.}
We require the category  $\sZ$ below to be enriched and tensored over $\sV$.
 
\begin{defn}
Let $(\bE,\mu,\eta)$ be a monad in a ground category $\sW$, let $(X,\theta)$ be an $\bE$-algebra, and let
$\bY\colon \sW\rtarr \sZ$ be a right $\bE$-functor, namely a functor together with an action natural transformation
$\vartheta \colon \bY\bE \rtarr \bY$ such that $\vartheta\com \bY\eta = \bI$ and $\vartheta \com \bY\mu = \vartheta\com \vartheta \bE$. Then  the bar construction $B(\bY,\bE,X)$ in $\sZ$ is defined as the geometric realization, constructed as usual in our topological examples, of a simplicial object $B_\sbt(\bY,\bE,X)$ in $\sZ$  with $q$-simplices  $\bY \bE^{q} X$.  The faces are induced by $\vartheta$, $\mu$, and the action $\theta\colon \bE X \rtarr X$ and the degeneracies are induced by $\et$, as in \cite[\S9]{MayGeo}. 
\end{defn}

Conceptually, the bar construction  $B(\bY,\bE,X)$ is a derived variant of the monadic tensor product  $\bY\otimes_{\bE}X$, which is defined to be the coequalizer of the pair of maps $$\bY\bE X \rightrightarrows \bY X$$ in $\sZ$ induced by $\vartheta$ and $\tha$.  The maps $\bY \bE^{q} X \rtarr \bY X$ given by iterated use of $\mu$ induce a map of simplicial objects in $\sZ$ from $B_\sbt(\bY,\bE,X)$ to the constant simplicial object at $\bY\otimes_{\bE} X$.  Its realization is a natural map
$$\epz\colon B(\bY,\bE,X)\rtarr \bY\otimes_{\bE}X.$$

The most obvious example of $\bY$ is $\bE\colon \sW\rtarr \sW$, with $\vartheta = \mu$.  In this case $\mu$ also induces an
$\bE$-algebra structure on $B(\bE,\bE,X)$, and $\epz\colon B(\bE,\bE,X) \rtarr X$ is a map 
of $\bE$-algebras.  By the standard extra degeneracy argument, as in \cite{MayGeo, Shul}, $\epz$ is a homotopy equivalence when $\sW$ is any of the categories of spaces or $G$-spaces we consider, with homotopy inverse $\et\colon X \rtarr B(\bE,\bE,X)$ induced by $\et\colon X \rtarr \bE X$ on $0$-simplices. Examples of this form are the starting point of all of our variants of the Segal machine. By inspection in examples or a formal argument in general \cite{Shul},
we have a natural isomorphism
\begin{equation}\mylabel{BAotimes}
\bY\otimes_{\bE} B(\bE,\bE,X)\iso B(\bY,\bE,X)
\end{equation}
over $\bY\otimes_{\bE}X$.  

Recall that any adjoint pair of functors $(\bP,\bU)$ gives rise to a monad $\bE = \bU\bP$  with unit $\Id \rtarr \bE$ given by the unit $\et$ of the adjunction, and product $\bE\bE\rtarr \bE$ given by $\bU \epz \bP$, where $\epz$ is the counit of the adjunction.  Here the $\sW$ above is the domain category of $\bP$. Our monads $\bE$ are all of this form.

To compare with the categorical bar construction (in its general enriched form), we let
$\sO$ denote the object set of $\sE$.  We think of $\sO$ as a discrete $\sV$-category, identity morphisms only, and we have the inclusion of $\sV$-categories $\sO \rtarr \sE$. Notice that a covariant or contravariant $\sV$-functor $X\colon \sO \rtarr \sV$ is just an assigment of objects $X(e)$ of 
$\sV$, one for each object $e\in \sO$ of $\sE$. We then have a forgetful functor 
\begin{equation}
 \bU = \bU^{\sE}_{\sO} \colon {\Fun }(\sE,\sV) \rtarr {\Fun }(\sO,\sV). 
 \end{equation}
Assuming (as we have already done implicitly) that $\sV$ has coproducts, $\bU$ has the left adjoint 
 \begin{equation}
  \bP = \bP^{\sE}_{\sO} \colon {\Fun }(\sO,\sV) \rtarr {\Fun }(\sE,\sV)
 \end{equation}
 specified by
 \begin{equation}
 (\bP X)(e) = \coprod_{d}  \sE(d,e) \otimes X(d).
 \end{equation}
 The evaluation map of this $\sV$-functor is given by coproducts of maps
 $$ \sE(e,f)\otimes \sE(d,e) \otimes X(d) \rtarr \sE(d,f) \otimes X(d) $$
 given by composition in $\sE$.   Remembering only the underlying objects, this gives the monad $\bE = \bU\bP$ in 
the ground category $\sW = {\Fun }(\sO,\sV)$.
 
In this case, an action $\tha\colon \bE X \rtarr X$ is just the evaluation map of a covariant 
$\sV$-functor $X\colon \sE\rtarr \sV$.   
For a contravariant $\sV$-functor $Y\colon \sE\rtarr \sZ$, where $\sZ$ is a $\sV$-category tensored over  $\sV$, define a right $\bE$-functor 
 $\bY\colon {\Fun }(\sO,\sV)\rtarr\sZ$ by
$$ \bY(Z) = \coprod_e Y(e) \otimes Z(e).$$
Then $(\bY \bE)(Z) = \coprod_{e,d} Y(e) \otimes \sE(d,e) \otimes Z(d)$, and $\vartheta$ is induced by the evaluation map 
$Y(e) \otimes \sE(d,e)\rtarr Y(d)$ of the $\sV$-functor $Y$.  Expanding the definitions, the monadic bar construction is given by
\begin{equation}
B_q(\bY,\bE,X) = \coprod_{(e_0,\dots, e_q)} Y(e_q)\otimes \sE(e_{q-1},e_q)\otimes \cdots \otimes \sE(e_0,e_1) \otimes X(e_0).
\end{equation}
This is the same as the  $B_q(Y,\sE,X)$ of the categorical bar construction, and the faces and degeneracies agree.  
Therefore $B(\bY,\bE,X)$ coincides with the categorical two-sided bar construction $B(Y,\sE,X)$.   

An object $A\in \sV$ gives the represented contravariant $\sV$-functor $Y = \ul{\sV}(-,A)$ on $\sV$. If $\sE \subseteq \sV$, 
the associated right $\bE$-functor $\bY\colon \Fun(\sO,\sV) \to \sV$ is denoted by $A^{\bullet}$.
It is defined explicitly on $\sV$-functors 
$Z\colon \sO \rtarr \sV$ by
\[  A^{\bullet}(Z) = \coprod_{e} \sV(e, A)\otimes Z(e). \]
These right $\bE$-functors lead to the prolongation functors that are used to construct the Segal machine.

The treatment of the Segal machine in \cite{Shim, MMO} uses the categorical bar construction with $\sV = G\sU$.  We have analogous monads and monadic bar constructions starting from $\sV = G\sU_*$.  We did not use those in \cite{MMO} only because we did not yet have the invariance theorem and so did not know they behave well homotopically.   That gives one innovation in this paper.  But the main innovation is to replace $\sO$  by  the subgroupoid all isomorphisms of $\sE$.  Anticipating our examples, we denote this subgroupoid by $\SI$.  We again have a forgetful functor
\begin{equation}
 \bU = \bU^{\sE}_{\SI} \colon {\Fun }(\sE,\sV) \rtarr {\Fun }(\SI,\sV). 
 \end{equation}
 It has the left adjoint 
 \begin{equation}
  \bP = \bP^{\sE}_{\SI} \colon {\Fun }(\SI,\sV) \rtarr {\Fun }(\sE,\sV),
 \end{equation}
given by  the categorical tensor product of functors $\sE\otimes_{\SI} X$.  Thus, for an object $e$ of $\sE$,  $\bP(X)(e)$ is the coequalizer of the pair of maps
\[ \coprod_{(c,d)} \sE(d,e) \otimes \SI(c,d) \otimes X(c) \rightrightarrows \coprod_{d} \sE(d,e)\otimes X(d) \]
given by $\com\otimes \id$ and $\id\otimes \ze$, where $\com\colon \sE(d,e) \otimes \SI(c,d)\rtarr \sE(c,e)$ is composition 
in $\sE$ and $\ze\colon \SI(c,d) \otimes X(c) \rtarr X(d)$ is the evaluation map of the $\sV$-functor $X\colon \SI\rtarr \sV$. 
This gives a monad $\bE = \bU\bP$ in the ground category $\sW ={\Fun }(\SI,\sV)$.

Here again, for an object $A\in \sV$, we have a right $\bE$-functor $A^{\bullet}$.  It is defined on $\sV$-functors 
$Z\colon \SI\rtarr \sV$ as the categorical tensor product of the functor represented by $A$ and $Z$.  
Explicitly, it is the coequalizer  of the pair of maps
\[ \coprod_{(c,d)}  \sV(d, A) \otimes \SI(c,d) \otimes Z(c) \rightrightarrows \coprod_{d} \sV(d,A)\otimes Z(d) \]
given by composition in $\sV$ and the evaluation map of $Z$.  Assuming that $\SI\subset \sE\subset \sV$, 
the right action of $\bE$ on $A^{\bullet}$ is induced by composition in $\sV$.  To see this, it helps to observe 
that, ignoring associativity isomorphisms, $A^{\bullet}\bE Z$ is constructed by passage to coequalizers from
$$ \coprod_{a,b,c,d} \sV(d,A)\otimes \SI(c,d) \otimes \sE(b,c) \otimes \SI(a,b) \otimes Z(a).$$

\subsection{Overview of the relevant specializations}\label{concrete} 

In principle, there are eight relevant specializations visible in our general discussion.  Four are obtained by taking $\sE=\sF$
and  four are obtained by taking $\sE = \sF_G$.  In each case, two take $(\sV,\otimes) =(G\sU,\times)$ and two take 
$(\sV,\otimes) =(G\sU_*,\sma)$ for the enriching category, while two start from $\sO\subset \sE$ and two start from $\SI\subset \sE$.  

We first consider the distinction between $G\sU$ and $G\sU_*$.   Thus consider the functor categories
\begin{equation}\label{Fun}
 {\Fun }(\sE,\sU_G) \ \ \text{and} \ \   {\Fun }(\sE,{\sU_G}_*). 
\end{equation}
We require the first to consist of $G\sU$-functors and the second to consist of $G\sU_*$-functors; that is, the functors $X$ in these categories must be given by maps
\[   X\colon \sE(d,e) \rtarr \sU_G(X(d),X(e))    \ \ \ \text{or}  \ \ \ X\colon \sE(d,e) \rtarr {\sU_G}_*(X(d),X(e)) \]
in $G\sU$ or $G\sU_*$.  As said earlier, in the case of ${\sU_G}_*$ this forces $X$ to be reduced in the sense  that $X({\mathbf{0}})$ is a point.  In particular, $\sF$-$G$-spaces and $\sF_G$-$G$-spaces are reduced.  
Thus, the full subcategory of ${\Fun}(\sE,\sU_G)$ consisting of those functors that are reduced can be identified with ${\Fun}(\sE,{\sU_G}_*).$  Indeed,  
applying $X$ to the unique map $\mathbf{0}\rtarr \mathbf{n}$ in $\sF$ then gives the $G$-spaces $X(\mathbf{n})$ basepoints that are preserved by $X(\ph)$ for all morphisms $\ph\in \sF$, and similarly for $\sF_G$.

However, for any of the four $\bE$ that we have in the case of $(G\sU,\times)$, the bar construction $B(\bE,\bE,X)$
 is not reduced even when $X$ is so.  To remedy that and to get enrichment in 
$G\sU_*$ rather than in $G\sU$, we follow \cite[(3.3)]{MMO} and replace all bar constructions $B(\bY,\bE,X)$ with the reduced variants obtained by quotienting out the contractible $G$-subspace $B(\ast, \bE,X)$.  Here we use that the basepoints of the $Y_n$ induced by the maps $\mathbf{0} \rtarr \mathbf{n}$ in $\sF$ or  $\sF_G$ give a map
$\ast\rtarr \bY$ of $\bE$-functors for any $\bE$-functor $\bY$. In particular, we define
\begin{equation}
\widetilde{B}(\bE, \bE,X) = B(\bE, \bE,X)/B(\ast, \bE,X).
\end{equation}

In \cite{MMO}, we used the unadorned notation $B$ for such reduced bar constructions.  
Taking $\sE = \sF_G$ and restricting our target to be  $\sT_G$ rather than ${\sU_G}_*$,\footnote{By inspection of definitions, all of the monads in sight restrict to $\sT_G$, meaning that if $X$ takes values in $\sT_G$, then so do $\bE X$ and the relevant bar constructions.} the reduced bar construction gives rise to the versions of the Segal machines used there.  To describe our variants, we fix the following notations.

\begin{notn} 
The set of objects of $\sF$ is the set $\bN$ of natural numbers, so we write $\sO = \bN$ in that case.  
Correspondingly, we write $\sO =\bN_G$ for the set of objects $({\mathbf n},\al)$ of $\sF_G$.  
The maximal subgroupoid of $\sF$ is the disjoint union $\SI$ of the symmetric groups $\SI_n$,
and here the source and target of an isomorphism must be the same.   Correspondingly we write
$\SI_G$ for the maximal subgroupoid of $\sF_G$.  Thus $\SI_G$ has the same objects as $\sF_G$ and, for finite based $G$-sets 
$\al$ and $\be$, $\SI_G(\al,\be)$ is empty if $|\al|\neq |\be|$, and it is $\SI_n$, equipped with the conjugation 
$G$-action, if $|\al|=|\be|=n$.
\end{notn}

We have the following eight monads, listed here with the corresponding ground categories
\begin{equation}\label{monadF}
\begin{aligned}
&^{\times}\bF^{\bN} =({\Fun}(\bN,\sU_G),  \bU^{\sF}_{\bN} \bP^{\sF}_{\bN}) &  & ^{\times}\bF_G^{\bN_G}=({\Fun}(\bN_G,\sU_G),  \bU^{\sF_G}_{\bN_G} \bP^{\sF_G}_{\bN_G}) \\  
&^{\times}\bF^{\SI} =({\Fun}(\SI,\sU_G),  \bU^{\sF}_{\SI} \bP^{\sF}_{\SI})  &    &  ^{\times}\bF_G^{\SI_G} =({\Fun}(\SI_G,\sU_G),  \bU^{\sF_G}_{\SI_G} \bP^{\sF_G}_{\SI_G} )\\
&^{\sma}\bF^{\bN} =({\Fun}(\bN,\sT_G),  \bU^{\sF}_{\bN} \bP^{\sF}_{\bN})  & & ^{\sma}\bF_G^{\bN_G}=  ({\Fun}(\bN_G,\sT_G),\bU^{\sF_G}_{\bN_G} \bP^{\sF_G}_{\bN_G} ) \\   
&^{\sma}\bF^{\SI} =({\Fun}(\SI,\sT_G), \bU^{\sF}_{\SI} \bP^{\sF}_{\SI})  &   &  ^{\sma}\bF_G^{\SI_G}  =({\Fun}(\SI_G,\sT_G), \bU^{\sF_G}_{\SI_G} \bP^{\sF_G}_{\SI_G} )
\end{aligned}
\end{equation}

\begin{notn}\mylabel{simp}  By default, we agree  to write $\bF$ rather than $^{\sma}\bF$ for the bottom four monads, occasionally retaining the notation $^{\sma}\bF$ for emphasis or when making comparisons. 
We always write $^{\times}\bF$  when the $\times$-variant is meant.  
\end{notn}

The algebras for the four monads on the left are functors with domain $\sF$.  The two on the top are unreduced functors, and the two on the bottom are reduced functors. Similarly, the algebras for the four monads on the right are functors with domain $\sF_G$, with the two on the top being unreduced and the two on the bottom being reduced. For the four categories of algebras for the monads on the top, we will restrict our attention to those algebras whose underlying functor is reduced (and hence based), and lands in $\sT_G$. Recalling the equivalence between the categories of $\sF$-$G$-categories and $\sF_G$-$G$-categories, we see that we have eight monads in sight, all of which have equivalent categories of algebras, after the appropriate restrictions are taken.  There are still others that we have chosen not to consider; see \myref{PIPIG}.

We have eight corresponding bar constructions.  Fixing an $\sF$-$G$-space $X$ and an 
$\sF_G$-$G$-space $Y$,
we obtain eight $\sW_G$-$G$-spaces by taking $A^{\bullet}$ for $A\in \sW_G$ as the first variable in the bar constructions.   As a reminder that we must reduce the bar constructions in the $\times$-case  and that we prefer the $\sma$-case, we systematically write $\widetilde{B}$ in the $\times$-case and $B$ in the $\sma$-case.   

For example, we write
\begin{equation}\label{drei}
 (\widetilde{B}^{\bN_G} Y)(A) = \widetilde{B}(A^\bullet, ^{\times}\bF_G^{\bN_G},Y) \ \ \  \text{and}\ \ \  (B^{\SI} X)(A) = B(A^\bullet, ^{\sma}\bF^{\SI},X).
\end{equation}

We display these  $\sW_G$-$G$-spaces in a commutative diagram.  For the isomorphism $r$, which is given by  \myref{MAIN1}, we take $Y=\bP^{\sF_G}_{\sF} X$ or, equivalently,  $X = \bU^{\sF_G}_{\sF} Y$.

The dotted arrow maps $p$ are equivalences, but the proof is not obvious and will be omitted.  The left dotted arrow $q$ is thus also an equivalence since by \myref{MAIN2} the bottom left solid arrow $q$ is an equivalence.  The right dotted arrow $q$ is not an equivalence since the bottom right solid arrow $q$ is not an equivalence, as we shall see. However, the dotted arrows are included solely in order to make the relationships among the eight bar constructions clear since we have no real interest in the symmetric bar constructions defined using $\times$ rather than $\sma$.

\begin{equation}\label{ein}
\xymatrix{
\widetilde{B}^{\bN_G}Y \ar@{-->}[r]^-{q} \ar[d]_{p} & \widetilde{B}^{\SI_G}Y \ar@{-->}[d]^{p}  & \widetilde{B}^{\SI} X  
\ar@{-->}[d]^{p}  & \widetilde{B}^\bN X\ar[d]^{p} \ar@{-->}[l]_-{q} \\
B^{\bN_G}Y  \ar[r]_-{q}  &  \  B^{\SI_G} Y     \ar[r]^-{\iso}_-{r} & B^{\SI}X &  \ar[l]^-{q}  B^\bN X \\}
\end{equation}

The natural quotient map from products to smash products of $G$-spaces induces quotient maps labeled $p$.  The natural quotient maps from monads defined just using objects to monads defined using isomorphisms of objects induce quotient maps labeled $q$.

It was convenient to use $\times$ for some of the proofs in \cite{MMO}. However, as the use of dotted arrows emphasizes, we see no advantages to using $\times$ equivariantly or multiplicatively, preferring to use $\sma$ on the grounds that it is most natural to work throughout with based $G$-spaces.
The invariance theorem allows us to do so since it allows us to prove that the solid arrow maps $p$ 
in (\ref{ein}) are level $G$-equivalence of $\sW_G$-$G$-spaces; see \myref{smatimes1}. 

The diagram gives eight candidates for Segal machines. According with our preference for the variant of the bar construction defined using $\sma$, we rule out $\widetilde{B}^{\SI_G}$ and $\widetilde{B}^{\SI}$ from further consideration. Equivariantly, as we explain in \S\ref{NNG}, $\widetilde{B}^\bN$ and $B^\bN${\em fail} to give Segal machines, and we therefore also rule them out.  As we shall prove in the following sections, we are left with four equivalent Segal machines, two of which are symmetric monoidal and two or which are not. 

Nonequivariantly, with $G=e$, the left and right squares coincide and we have agreed to ignore $\widetilde{B}^{\SI}$.  Here $\widetilde{B}^\bN$ gives Woolfson's variant \cite{Woolf} of Segal's original machine \cite{Seg} and our $B^{\bN}$ gives an equivalent variant defined using $\sma$ instead of $\times$.   These machines are {\em not} symmetric monoidal, but our $B^{\SI}$ gives a third equivalent nonequivariant machine that is so. 

Equivariantly, as we explain in  \S\ref{NNG}, $\widetilde{B}^{\bN_G}$ gives the Segal machine studied in \cite{Shim, MMO}, and $B^{\bN_G}$ gives an equivalent machine defined using $\sma$ instead of 
$\times$.   Again, these machines are \emph{not} symmetric monoidal.

The main new idea of this paper is the introduction of the symmetric bar constructions defined using $\SI$ and $\SI_G$.  
In \myref{MAIN1}, we shall prove the surprising fact that $B^{\SI}$ and $B^{\SI_G}$ give isomorphic $\sW_G$-$G$-spaces, giving the isomorphism $r$ in (\ref{ein}).
In \myref{MAIN2}, we shall prove that the bottom left arrow $q\colon B^{\bN_G}Y\rtarr  B^{\SI_G} Y  $
 in (\ref{ein}) is a level $G$-equivalence of $\sW_G$-$G$-spaces.  These results prove the main result of the paper, namely that the composite 
 \begin{equation}\label{rqp}
 \xymatrix@1{
 \widetilde{B}^{\bN_G}Y \ar[r]^-{p} & B^{\bN_G}Y  \ar[r]^-{q}  &  \  B^{\SI_G} Y     \ar[r]_-{\iso}^-{r} & B^{\SI}X\\} 
 \end{equation}
displays equivalences between four Segal machines when $Y=\bP^{\sF_G}_{\sF} X$ or, equivalently,  
when $X = \bU^{\sF_G}_{\sF} Y$.  The 
isomorphic symmetric variants remedy the defects of the original Segal machine of \cite{Shim, MMO}. 
 
\subsection{The bar constructions defined using $\bN_G$}\label{NNG}

We start with $\sF_G$ and $\bN_G$ and recall the classical Segal machine of \cite{Shim, MMO}, adding tilde to the notation to distinguish this variant from our  variant defined using smash products.

\begin{defn}\mylabel{mach1} For an $\sF_G$-$G$-space $Y$, define $\shimtimes Y$ to be the $G$-spectrum associated to 
the $\sW_G$-$G$-space $ \widetilde{B}^{\bN_G} {Y}$, so that
\[ (\shimtimes Y)(V) = ( \widetilde{B}^{\bN_G}Y)(S^V) .\]
Similarly, define $\shimsma Y$ to be the $G$-spectrum associated to 
the $\sW_G$-$G$-space $B^{\bN_G}Y$, so that
\[ (\shimsma Y)(V) = (B^{\bN_G}Y)(S^V) .\]
\end{defn}

As was noted in \cite[Remark 3.6]{MMO}, unravelling definitions shows that $( \widetilde{B}^{\bN_G}Y)(A)$ is the geometric realization of a simplicial $G$-space with $q$-simplices given by wedges of half-smash products:
$$\bigvee_{\al_0, \dots, \al_q} A^{\al_q}\sma \big{(}\sF_G(\al_{q-1}, \al_q) \times \dots\times \sF_G(\al_0,\al_1)\times  Y(\al_0)\big{)}_+. $$
For comparison, $(B^{\bN_G}Y)(A)$ is the geometric realization of a simplicial $G$-space with $q$-simplices given by wedges of smash products:
\[\bigvee_{\al_0, \dots, \al_q} A^{\al_q}\sma \sF_G(\al_{q-1}, \al_q)\sma\dots\sma \sF_G(\al_0,\al_1)\sma Y(\al_0). \]

It was proven in \cite{Shim}, with an updated proof in \cite[Theorem 3.22]{MMO}, that $\shimtimes$ is indeed a Segal machine in the sense of \myref{Segma}.   The following results imply that $\shimsma$ gives another, equivalent, Segal machine.  

\begin{lem}\mylabel{proper} 
The $\sF_G$-$G$-spaces $\widetilde{B}(^{\times}\bF_G^{\bN_G}, ^{\times}\bF_G^{\bN_G}, Y)$ and 
$B(^{\sma}\bF_G^{\bN_G}, ^{\sma}\bF_G^{\bN_G}, Y)$ 
are proper for any $\sF_G$-$G$-space $Y$.
\end{lem}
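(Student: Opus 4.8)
The plan is to verify the defining condition of \myref{cof} directly, using the explicit simplicial descriptions recalled above from \cite[Remark~3.6]{MMO}. Unwinding definitions, for an $\sF_G$-$G$-space $Y$ and a simplicial $G$-set $A_\sbt$ the simplicial $G$-space $(\bP_{\sF_G}^{\sW_G}\,\widetilde{B}({}^{\times}\bF_G^{\bN_G},{}^{\times}\bF_G^{\bN_G},Y))(A_\sbt) = (\widetilde{B}^{\bN_G}Y)(A_\sbt)$ is, in simplicial degree $p$, the geometric realization over the bar coordinate $q$ of the $G$-spaces
\[ B_q(A_p)\ :=\ \bigvee_{\al_0,\dots,\al_q} A_p^{\al_q}\sma\big(\sF_G(\al_{q-1},\al_q)\times\cdots\times\sF_G(\al_0,\al_1)\times Y(\al_0)\big)_+, \]
and likewise, with half-smash products replaced by smash products, for $B^{\bN_G}$. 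Since a simplicial $G$-space is Reedy cofibrant as soon as all of its degeneracies are $G$-cofibrations \cite[\S1.2]{MMO}, it is enough to prove that $(\bP_{\sF_G}^{\sW_G}\,\widetilde{B}(\cdots,Y))(s_i)$ is a $G$-cofibration for every degeneracy operator $s_i\colon A_p\rtarr A_{p+1}$ of $A_\sbt$. By functoriality that map is the geometric realization, over $q$, of the map of simplicial $G$-spaces $B_\sbt(A_p)\rtarr B_\sbt(A_{p+1})$ induced by $s_i$; the half-smash and smash versions are handled identically, so I will discuss only the former.

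I would show that $B_\sbt(A_p)\rtarr B_\sbt(A_{p+1})$ is a Reedy cofibration of simplicial $G$-spaces, and then conclude by the standard fact that geometric realization carries Reedy cofibrations to $G$-cofibrations (the skeletal filtration argument, cf. \cite[\S1]{MMO}). Two things need checking. First, each $B_\sbt(A)$ is itself Reedy cofibrant: a bar degeneracy $B_q(A)\rtarr B_{q+1}(A)$ simply inserts an identity morphism, so it identifies $B_q(A)$ with a sub-wedge of $B_{q+1}(A)$ (the reindexing of chains is injective and the factors $A^{\al_q}\sma(\cdots)_+$ are carried along unchanged), and the inclusion of a wedge summand is a $G$-cofibration because all the summands are nondegenerately based --- here one uses that each $A_p$ is discrete (so $A_p^{\al_q}$ is nondegenerately based), that the morphism sets of $\sF_G$ are discrete, that $Y$ lands in $\sT_G$, and that smash and half-smash products of nondegenerately based $G$-spaces are again nondegenerately based. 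Second, for the map $B_\sbt(A_p)\rtarr B_\sbt(A_{p+1})$ itself one computes that its relative latching map in degree $q$ is the wedge of the identity map of the latching object $L_q B_\sbt(A_{p+1})$ with the maps $(A_p^{\al_q}\rtarr A_{p+1}^{\al_q})\sma\id$ indexed by the nondegenerate chains. Now $s_i\colon A_p\rtarr A_{p+1}$ is a monomorphism --- every degeneracy operator of a simplicial set is split by a face operator --- so $A_p^{\al_q}\rtarr A_{p+1}^{\al_q}$ is a monomorphism of discrete $G$-spaces, hence a $G$-cofibration, and smashing it with the nondegenerately based $G$-space $\big(\sF_G(\al_{q-1},\al_q)\times\cdots\times Y(\al_0)\big)_+$ preserves this property (pushout--product with a $G$-NDR pair). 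A wedge of $G$-cofibrations is a $G$-cofibration, so the relative latching maps are $G$-cofibrations and $B_\sbt(A_p)\rtarr B_\sbt(A_{p+1})$ is a Reedy cofibration.

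The one genuinely nontrivial ingredient is the passage from Reedy cofibrations of simplicial $G$-spaces to $G$-cofibrations of their realizations, which rests on the skeletal filtration of the realization; everything else is bookkeeping, the only care required being to keep every wedge summand and every smash factor nondegenerately based at each step --- precisely the reason the standing hypothesis that $\sF_G$-$G$-spaces take values in $\sT_G$, rather than in ${\sU_G}_*$, is essential. The argument is the evident elaboration of the treatment of the categorical $\times$-bar construction implicit in \cite{Shim,MMO}, now carried out uniformly for both the $\times$- and the $\sma$-variant, and it applies verbatim with $Y$ replaced by $\bP^{\sF_G}_{\sF}X$ to give the corresponding statement for $\sF$-$G$-spaces.
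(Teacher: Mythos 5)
Your argument is a from-scratch reconstruction, whereas the paper's proof of this lemma is a citation: \cite[Proposition 9.13]{MMO} has already shown that the prolonged $\sW_G$-$G$-space $\widetilde{B}^{\bN_G}Y$ preserves Reedy cofibrancy (the same argument works with $\times$ replaced by $\sma$), and Remark~\myref{Reedy} notes that preserving Reedy cofibrancy implies properness. So the paper outsources the entire content to the prequel, while you reprove it. Moreover, even as a reconstruction your route is heavier than the one the paper itself uses for the analogous Proposition~\myref{FGspacesproper}: there, the passage from a map of realizations to a $G$-cofibration is handled by \cite[Theorem 1.11]{MMO}, which asks only that the map be a \emph{levelwise} $G$-cofibration between simplicial $G$-spaces that are Reedy cofibrant in the weak sense (degeneracies are $G$-cofibrations). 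You instead set out to verify that $B_\sbt(A_p)\rtarr B_\sbt(A_{p+1})$ is a full Reedy cofibration via relative latching maps, which is strictly stronger and requires more bookkeeping. Your approach works in principle, but you could avoid the latching analysis entirely by invoking that theorem, exactly as the paper does in the $\Sigma$-case.

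Two imprecisions deserve attention. First, the claim that ``a bar degeneracy ... identifies $B_q(A)$ with a sub-wedge of $B_{q+1}(A)$ (the reindexing of chains is injective and the factors $A^{\al_q}\sma(\cdots)_+$ are carried along unchanged)'' is not right as stated. The degeneracy sends the summand of $B_q(A)$ indexed by $(\al_0,\dots,\al_q)$ into the summand of $B_{q+1}(A)$ indexed by $(\al_0,\dots,\al_j,\al_j,\dots,\al_q)$, but it does so by inserting $\id_{\al_j}$ into the new factor $\sF_G(\al_j,\al_j)$; it is a proper sub-inclusion of that summand, not an isomorphism onto it, so the factor is \emph{not} carried along unchanged and $B_q(A)$ is not a sub-wedge. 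What is true, and what you actually need, is that $\{\id_{\al_j}\}\hookrightarrow\sF_G(\al_j,\al_j)$ is the inclusion of a $G$-fixed point into a discrete $G$-set, hence a $G$-cofibration, and smashing (or half-smashing) with nondegenerately based $G$-spaces preserves $G$-cofibrations; in the $\sma$-variant remember that $\id_{\al_j}$ is not the basepoint of $\sF_G(\al_j,\al_j)$. Second, the relative latching map computation is asserted (``one computes'') rather than carried out; the identification of $L_q B_\sbt(A)$ with the degenerate sub-wedge of $B_q(A)$ is the crucial combinatorial input there and deserves an argument, which is another reason the paper's levelwise criterion is the cleaner tool.
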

\begin{proof}
By \cite[Proposition 9.13]{MMO}, the prolonged $\sW_G$-$G$-space given by the $( \widetilde{B}^{\bN_G}Y)(A)$ preserves Reedy cofibrancy, and the same argument works with $\times$ replaced by $\sma$. The conclusion follows from \myref{Reedy}.
\end{proof}
 
\begin{thm}\mylabel{smatimes1}
For $\sF_G$-$G$-spaces $Y$, there is a natural level $G$-equivalence of $G$-spectra $$\shimtimes Y\rtarr \shimsma Y. $$
 \end{thm}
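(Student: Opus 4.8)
The plan is to deduce the level $G$-equivalence $\shimtimes Y \to \shimsma Y$ from the invariance theorem, applied to the quotient maps $p$ of \eqref{ein}. Concretely, the map in question is obtained by prolonging, and then restricting to spheres $S^V$, a map of $\sF_G$-$G$-spaces $\widetilde{B}(^{\times}\bF_G^{\bN_G}, ^{\times}\bF_G^{\bN_G}, Y) \to B(^{\sma}\bF_G^{\bN_G}, ^{\sma}\bF_G^{\bN_G}, Y)$ induced levelwise by the natural quotient map from (reduced) products of $G$-spaces to their smash products. So the first step is to write down this map carefully and check it is a well-defined map of $\sF_G$-$G$-spaces: on $q$-simplices of the defining simplicial $G$-spaces it is the evident map
\[ A^{\al_q}\sma \big(\sF_G(\al_{q-1},\al_q)\times\cdots\times \sF_G(\al_0,\al_1)\times Y(\al_0)\big)_+ \longrightarrow A^{\al_q}\sma \sF_G(\al_{q-1},\al_q)\sma\cdots\sma \sF_G(\al_0,\al_1)\sma Y(\al_0), \]
compatible with faces and degeneracies, hence realizes to the map $p$; since all of the $A^\bullet$-structure here is natural in $A$ the construction passes through the prolongation functor $\bP_{\sF_G}^{\sW_G}$.

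The second step is to verify the two hypotheses of the invariance theorem (\myref{inv2}) for this map. Properness of both source and target is exactly \myref{proper}. For the level-equivalence hypothesis we must show the map $\widetilde{B}(^{\times}\bF_G^{\bN_G}, ^{\times}\bF_G^{\bN_G}, Y) \to B(^{\sma}\bF_G^{\bN_G}, ^{\sma}\bF_G^{\bN_G}, Y)$ is a level $G$-equivalence of $\sF_G$-$G$-spaces — that is, a level $G$-equivalence of the underlying $\PI_G$-$G$-spaces. Here I would argue simplicial degree by simplicial degree: in each degree $q$ the map is a wedge, over tuples $(\al_0,\dots,\al_q)$, of the quotient map from a half-smash product $Z\sma W_+$ to the smash product $Z\sma W$, where $W = \sF_G(\al_{q-1},\al_q)\times\cdots\times Y(\al_0)$ is a reduced based $G$-space with a nice (nondegenerate, $G$-cofibrant) basepoint. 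The key point is that $Z\sma W_+ \to Z\sma W$ is a $G$-equivalence whenever $W$ is well-based: this is the standard fact (used already in \cite[(3.3)]{MMO}) that collapsing the contractible $G$-subspace $B(\ast, \bE, -)$, equivalently quotienting by the wedge-summand coming from the basepoint, is a $G$-equivalence. Having a levelwise $G$-equivalence of the simplicial $G$-spaces, together with Reedy cofibrancy of both (again from \myref{proper}, or directly since degeneracies are built from $G$-cofibrations), the realization is a $G$-equivalence by \cite[Theorem~1.10]{MMO}; this gives the level $G$-equivalence of $\sF_G$-$G$-spaces.

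The third step is then purely formal: apply \myref{inv2} to conclude that $\bP_{\sF_G}^{\sW_G} p$ is a $G$-equivalence at every based $G$-CW complex $A$, in particular at $A = S^V$ for all $V$, which is precisely the statement that $\shimtimes Y \to \shimsma Y$ is a level $G$-equivalence of orthogonal $G$-spectra; naturality in $Y$ is inherited from the naturality of $p$. I expect the main obstacle to be the bookkeeping in the second step — namely being careful that the relevant based $G$-spaces ($W$ above, and the iterated quotients $B(\ast,\bE,Y)$) really are well-based so that the half-smash-to-smash maps are $G$-equivalences, and that these equivalences assemble compatibly across simplicial degrees into a Reedy-cofibrant-level equivalence. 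Once that is in hand, invoking the invariance theorem is immediate, which is exactly the point of having proved \myref{inv2}.
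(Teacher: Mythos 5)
Your steps 1 and 3 — writing down the quotient map $p$ of $\sF_G$-$G$-spaces and then applying the invariance theorem (\myref{inv2}) together with the properness of \myref{proper} — match the paper's strategy exactly. The gap is in step 2, and it is not a bookkeeping issue but a genuine error. Your key claim, that the quotient $Z\sma W_+ \to Z\sma W$ is a $G$-equivalence whenever $W$ is well-based, is false. That map sits in a cofiber sequence $Z \to Z\sma W_+ \to Z\sma W$, where the copy of $Z$ being collapsed is the image of $Z\times\{w_0\}$; it is a $G$-equivalence precisely when the wedge summand $Z$ is $G$-contractible, which in the situation at hand it is not. In the Segal machine $Z$ is $\sF_G(\al_q,\be)$ (or, after prolongation, $A^{\al_q}$), a nontrivial based $G$-set (resp.\ a product of copies of $A$). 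Already in simplicial degree $q=0$ the map is $\bigvee_{\al} \sF_G(\al,\be)\sma Y(\al)_+ \to \bigvee_{\al}\sF_G(\al,\be)\sma Y(\al)$, which collapses $\bigvee_\al \sF_G(\al,\be)$ and is not a $G$-equivalence. Your appeal to the contractibility of $B(\ast,\bE,-)$ conflates a statement about realizations with a degree-wise statement: $B(\ast,\bE,X)$ is contractible as a realization because its defining simplicial object is augmented over a point with an extra degeneracy, but in any single simplicial degree it is not contractible, so there is no degree-wise collapse argument.

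Because no degree-wise analysis of $p$ is available, the paper instead proves that $p$ is a level $G$-equivalence of $\sF_G$-$G$-spaces by a two-out-of-three argument via the augmentations to $Y$. All three of $B(^{\times}\bF_G^{\bN_G},{}^{\times}\bF_G^{\bN_G},Y)$, $\widetilde B(^{\times}\bF_G^{\bN_G},{}^{\times}\bF_G^{\bN_G},Y)$, and $B(^{\sma}\bF_G^{\bN_G},{}^{\sma}\bF_G^{\bN_G},Y)$ come equipped with augmentation maps $\epz$ to $Y$. The outer two $\epz$'s are level $G$-equivalences by the standard extra-degeneracy argument, and the quotient $B\to\widetilde B$ is a level $G$-equivalence because it collapses the level-contractible, cofibered-in $B(\ast,\bE,Y)$. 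Two applications of two-out-of-three then force the middle $\epz$, and hence $p$, to be level $G$-equivalences. With that in hand, your step 3 finishes the proof exactly as you describe.
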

 \begin{proof}  We have the following commutative triangle, where $p$ is the evident quotient map and the middle arrow $\epz$ is induced by passage to quotients from the left arrow $\epz$.  Since it is not reduced, the left corner is not an $\sF_G$-$G$-space, 
 but the map 
 $p$ is a map of $\sF_G$-$G$-spaces and it induces a map of prolongations to $\sW_G$-$G$-spaces, which in turn induces a map  
 $p\colon \shimtimes Y \rtarr \shimsma Y$ of $G$-spectra.
 \begin{equation}\label{triangle2}
\xymatrix{
B(^{\times}\bF_G^{\bN_G}, ^{\times}\bF^{\bN_G}_G,Y) \ar[r]^-{\htp} \ar[dr]_{\epz} 
&  \widetilde{B}(^{\times}\bF_G^{\bN_G}, ^{\times}\bF^{\bN_G}_G,Y)  \ar[r]^-{p}  \ar[d]_{\epz} 
& B(^{\sma}\bF_G^{\bN_G}, ^{\sma}\bF^{\bN_G}_G,Y) \ar[ld]^{\epz} \\
& Y & \\} 
\end{equation}
The left and right maps $\epz$ are level $G$-equivalences. As explained in \cite[\S3.1]{MMO}, $B(\ast, \bF_G^{\times,N},Y)$ is level $G$-contractible and the inclusion of it into $B(^{\times}\bF_G^{\bN_G}, ^{\times}\bF_G^{\bN_G},Y)$ is a level $G$-cofibration, hence the arrow labeled $\htp$ is a level $G$-equivalence.   Therefore the middle map $\epz$ and the quotient map 
$p$ are level $G$-equivalences.
Remembering (\ref{BAotimes}), \myref{proper} allows us to apply the invariance theorem of \myref{inv2} to obtain the conclusion.
\end{proof}

\begin{rem}\mylabel{why-not-N}
Consider $\sF$ and $\bN$.  We recall briefly from \cite[Remark 3.10]{MMO} why the obvious generalization of the nonequivariant machine to $\sF$-$G$-spaces $X$ does not work equivariantly.  Just as in the previous proof, we have the following diagram.
 \begin{equation}\label{triangle3}
\xymatrix{
B(^{\times}\bF^{\bN}, ^{\times}\bF^{\bN},Y) \ar[r]^-{\htp} \ar[dr]_{\epz} 
&  \widetilde{B}(^{\times}\bF^{\bN}, ^{\times}\bF^{\bN},Y)  \ar[r]^-{p}  \ar[d]_{\epz} 
& B(^{\sma}\bF^{\bN}, ^{\sma}\bF^{\bN},Y) \ar[ld]^{\epz} \\
& Y & \\} 
\end{equation}
The left and right maps $\epz$ are again level $G$-equivalences, as is the arrow labeled $\htp$, hence so are the middle arrow $\epz$ and the arrow $p$.  However, because the homotopy inverse $\et$ of $\epz$ on the left and right is not level $\SI$-equivariant, these level $G$-equivalences are not \gen-level equivalences.  Therefore, prolonging the diagram to $\sF_G$-$G$-spaces does \emph{not} result in level $G$-equivalences and the invariance theorem does \emph{not} apply, even though the bar constructions here are again proper. 
\end{rem}

\section{The symmetric Segal machines}\label{symma}

The previous section laid the groundwork for our symmetric Segal machines defined using $\SI$ and $\SI_G$. We here define these machines and compare them to each other and to the machines defined using $\bN_G$.  We emphasize that the comparison with the machine defined using $\bN_G$  gives the only proof we know that the symmetric machines are in fact Segal machines.

\subsection{The Segal machines defined using $\SI$ and $\SI_G$}\label{SISIG}  

Anticipating the proofs to follow, we offer the following new definitions of Segal machines.   Starting from $\sF$-$G$-spaces, the first of our new machines will solve the problem with $\sF$ and $\bN$ discussed in \myref{why-not-N}. Recall \myref{simp}.

\begin{defn}\mylabel{mach2} For an $\sF$-$G$-space $X$, define $\symfsma X$ to be the $G$-spectrum associated to 
the $\sW_G$-$G$-space $B^{\SI}X$, so that
\[ (\symfsma X)(V) = (B^{\SI}X)(S^V) .\]
\end{defn}

More explicitly, $(B^{\SI}X)(A)$ is the geometric realization of a simplicial $G$-space with $q$-simplices given by passage to orbits over symmetric group actions from  the $q$-simplices of $(B^{\bN}X)(A)$: the $q$-simplices are
\[\bigvee_{n_q, \dots, n_0} A^{n_q}\sma_{\SI_{n_q}} \sF(\mb{n_{q-1}}, \mb{n_q})\sma_{\SI_{n_{q-1}}}\dots
\sma_{\SI_{n_1}} \sF(\mb{n_0},\mb{n_1})\sma_{\SI_{n_0}} X(\mb{n_0}). \]
The following analogue provides an intermediary that will allow us to compare the Segal machines of
Definitions \ref{mach1} and \ref{mach2} but, surprisingly, it turns out to give a machine that is actually isomorphic to that of \myref{mach2}.

\begin{defn}\mylabel{mach3} For an $\sF_G$-$G$-space $Y$, define $\symsma Y$ to be the $G$-spectrum associated to 
the $\sW_G$-$G$-space $B^{\SI_G}Y$, so that
\[ (\symsma Y)(V) = (B^{\SI_G}Y)(S^V) .\]
\end{defn}

We summarize what we shall prove about these machines.
For any $\sF_G$-$G$-space $Y$, we have maps of $G$-spectra
$$\xymatrix@1{
\shimtimes Y  \ar[r]^-{p} &
\shimsma Y \ar[r]^-{q}   & \bS_G^{\SI_G}Y   \ar[r]^-{\iso} & \bS_G^{\SI} \bU^{\sF_G}_{\sF}Y.  \\} $$
For any $\sF$-$G$-space $X$, we have maps of $G$-spectra
$$ \xymatrix@1{
\shimtimes \bP_\sF^{\sF_G}X \ar[r]^-{p} &
\shimsma \bP_\sF^{\sF_G}X \ar[r]^-{q}   & \bS_G^{\SI_G}\bP_\sF^{\sF_G}X   \ar[r]^-{\iso} & \bS_G^{\SI} X.  \\} $$

\myref{MAIN1} will give that the maps labeled $\iso$ are indeed natural isomorphisms of $G$-spectra. \myref{MAIN2} will give that the 
middle quotient maps $q$ are level $G$-equivalences of $G$-spectra.  \myref{smatimes1} already gives that the left quotient maps $p$ 
are level equivalences of $G$-spectra. Thus the cited results show that the proposed Segal machines in Definitions \ref{mach2} and \ref{mach3} are level $G$-isomorphic and are level $G$-equivalent to the Segal machines of \myref{mach1}. 

\subsection{The proof that the symmetric bar constructions are proper}\label{properpf}

The comparison of machines depends on the invariance theorem, and to apply it we need to know that our symmetric bar constructions give proper $\sF$-$G$-spaces, just as the bar constructions of \S\ref{NNG} do (see \myref{proper}).    We only consider $\bF^{\SI}$ here, dealing with $\bF_G^{\SI_G}$ in \myref{MAIN1}.   Again recall \myref{simp}.

\begin{lem}\mylabel{Reedycof}
Let $A$ be a based $G$-space, and let $X$ be an $\sF$-$G$-space. Then the simplicial $G$-space 
$B_\sbt(A^\bullet, \bF^{\SI}, X)$ is Reedy cofibrant. 
\end{lem}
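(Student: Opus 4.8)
The plan is to verify Reedy cofibrancy by showing that each degeneracy map of the simplicial $G$-space $B_\sbt(A^\bullet, \bF^{\SI}, X)$ is a $G$-cofibration, using the criterion recalled in \myref{cof} (that a simplicial $G$-space is Reedy cofibrant as soon as all its degeneracies are $G$-cofibrations). First I would unwind the definition of the monadic bar construction for the monad $^{\sma}\bF^{\SI} = \bU^{\sF}_{\SI}\bP^{\sF}_{\SI}$: the $q$-simplices $B_q(A^\bullet, \bF^{\SI}, X)$ are the wedges of iterated $\SI_n$-balanced smash products displayed just before the statement, namely
\[ \bigvee_{n_q,\dots,n_0} A^{n_q}\sma_{\SI_{n_q}} \sF(\mb{n_{q-1}},\mb{n_q}) \sma_{\SI_{n_{q-1}}} \cdots \sma_{\SI_{n_1}} \sF(\mb{n_0},\mb{n_1})\sma_{\SI_{n_0}} X(\mb{n_0}), \]
and the degeneracy $s_i$ is induced, wedge summand by wedge summand, by the unit $\eta$ of the adjunction, which inserts an identity morphism $\id\in \sF(\mb{n_i},\mb{n_i}) = \bE(\ )$ at the $i$th spot. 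So the key point is that this insertion of a basepoint-disjoint identity morphism is a $G$-cofibration, and that cofibrations are preserved under the wedge, the smash, and the $\SI_n$-orbit operations in sight.

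The main structural input is that the morphism $G$-sets $\sF(\mb m,\mb n)$ are discrete, so $\sF(\mb m,\mb n)_+$ is a free (in particular cofibrant) based $G$-space, and inclusion of a single $G$-fixed point $\{\id\}_+ \hookrightarrow \sF(\mb n,\mb n)_+$ is a $G$-cofibration with the complement again a wedge of free orbits. Smashing a $G$-cofibration with any based $G$-space (here the other tensor factors $A^{n_q}$, the other morphism sets, and $X(\mb{n_0})$, all lying in $\sT_G$, hence nondegenerately based) again gives a $G$-cofibration, by the NDR-pair characterization; this is exactly the kind of argument used in \cite[\S9]{MayGeo} and \cite{MMO} for the unreduced and half-smash versions. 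The extra wrinkle relative to \myref{proper} is the passage to $\SI_n$-orbits. Here one uses that each $\SI_{n_j}$ acts \emph{freely} on the relevant factor $\sF(\mb{n_{j-1}},\mb{n_j})$ on one side (or on $A^{n_q}$, resp. on $X(\mb{n_0})$, through the already-free morphism objects), so that $(-)\sma_{\SI_{n_j}}(-)$ is a homotopy orbit construction on a free action and therefore sends (levelwise, $G\times\SI_{n_j}$-equivariant) cofibrations of pairs to $G$-cofibrations; cf. the treatment of balanced products in \cite[\S1.2]{MMO}. Concatenating these observations across the $q+1$ smash factors shows that $s_i$ is a $G$-cofibration, and since a wedge of $G$-cofibrations is a $G$-cofibration we conclude.

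The step I expect to be the main obstacle is the bookkeeping for the $\SI_n$-orbit passage: one must check carefully that inserting the identity morphism is compatible with the $\SI_{n_i}$-actions on both adjacent smash factors — i.e.\ that the degeneracy descends to the balanced smash products at all — and that the relevant action whose quotient is being taken is free \emph{on the nose} at each stage, rather than only up to homotopy, so that the orbit functor really does preserve cofibrations. Once that equivariant freeness is pinned down (it follows from the fact that $\SI_n$ acts freely on $\sF(\mb m,\mb n)$ whenever $m\le n$, which is exactly the situation forced by having a nonzero summand, together with the corresponding statement for $A^n$ regarded via the permutation action — note $A^0 = \ast$ is excluded since those summands are collapsed in the reduced bar construction), the remaining verifications are the routine NDR-pair manipulations. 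A clean way to organize all of this is to observe, as in \myref{Reedy} and \cite[Proposition 9.13]{MMO}, that it suffices to treat the case $A_\sbt$ a simplicial $G$-set and even to reduce to $A$ a single based $G$-set, and then invoke the arguments already in place for the $\bN$-indexed and $\times$-flavored bar constructions, the only genuinely new ingredient being the free $\SI_n$-quotients just discussed.
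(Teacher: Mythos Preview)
Your overall strategy---reduce to showing that each degeneracy is a $G$-cofibration, exhibit the degeneracy as a wedge of ``insert $\id$'' maps, check the cofibration property before passing to $\SI$-orbits, and then descend---is exactly what the paper does. The paper's proof is terser but follows the same skeleton, citing \cite[Appendix, Proposition~2.6]{BVbook} for nondegenerate basepoints of $A^n$ and then observing that discreteness of the morphism sets in $\sF$ makes the pre-orbit map an equivariant cofibration.

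There is, however, a genuine error in your justification for the orbit step. You assert that $\SI_n$ acts freely on $\sF(\mb m,\mb n)$ whenever $m\le n$, and that this condition is forced by having a nonzero wedge summand. Both claims are false. For the first: take $m=1$, $n=3$ and the map $1\mapsto 1$; the transposition $(2\,3)\in\SI_3$ fixes it, so the action is not free away from the basepoint. For the second: the wedge in $B_q$ runs over \emph{all} tuples $(n_0,\dots,n_q)$, with no monotonicity constraint, and a summand is nontrivial as soon as no $n_i=0$ and the relevant morphism sets are nonempty---nothing forces $n_{i-1}\le n_i$.

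Fortunately, freeness is a red herring: it is neither true nor needed. Passage to orbits by a finite group $H$ sends $(G\times H)$-cofibrations to $G$-cofibrations unconditionally. This follows directly from the NDR-pair characterization: a $(G\times H)$-equivariant retraction $X\times I \to X\times\{0\}\cup A\times I$ descends to a $G$-equivariant retraction on $H$-orbits. (The paper invokes exactly this, citing \cite[Appendix, Lemma~2.3]{BVbook} in the proof of the next proposition.) Once you replace your freeness argument with this general fact, the rest of your outline goes through and matches the paper's proof.
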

\begin{proof}
 By \cite[Lemma~1.9]{MMO}, it suffices to show that the degeneracy maps are based $G$-cofibrations. These are wedges of maps of the form 
 \[ A^n\sma_{\SI_n} \sF(\mathbf{k},\mathbf{n}) \sma_{\SI_k} X(\mathbf{k}) \rtarr
 A^n\sma_{\SI_n} \sF(\mathbf{k},\mathbf{n}) \sma_{\SI_k} \sF(\mathbf{k},\mathbf{k})\sma_{\SI_k} X(\mathbf{k}) 
 \]
that are obtained by inserting $\id_{\bf k}$ into one factor.  \cite[Appendix, Proposition 2.6]{BVbook} implies that $A^n$ is nondegenerately based as a $(G\times \SI_n)$-space.
We assume or arrange by whiskering that $X(\mathbf{k})$ is nondegenerately based as a $(G\times \SI_k)$-space. Since 
the mapping spaces of $\sF$ are all discrete, we deduce that the degeneracy maps are $G$-cofibrations from the cofibration property we see before passage
to orbits.  
\end{proof}

 \begin{prop}\mylabel{FGspacesproper}  Let $X$ be an $\sF$-$G$-space. Then
the $\sF$-$G$-space $B(\bF^\SI, \bF^\SI, X)$ is proper and thus, equivalently, the $\sF_G$-$G$-space 
$\bP_\sF^{\sF_G} B(\bF^\SI, \bF^\SI, X)$ is proper.
\end{prop}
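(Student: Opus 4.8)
The plan is to reduce the claim to a statement about Reedy cofibrancy of a prolonged simplicial $G$-space and then invoke the machinery already assembled in \S\ref{NNG} and \S\ref{formal}. Recall from \myref{cof} that $X$ is proper if and only if $\bP_\sF^{\sF_G} X$ is proper, so the two formulations in the statement really are equivalent, and it suffices to prove the first. By the definition of properness, we must show that for any based simplicial $G$-set $A_\sbt$, the simplicial $G$-space $(\bP_\sF^{\sW_G} B(\bF^\SI,\bF^\SI,X))(A_\sbt)$ is Reedy cofibrant. By \myref{Reedy}, it is enough to prove the stronger statement that the $\sW_G$-$G$-space $\bP_\sF^{\sW_G} B(\bF^\SI,\bF^\SI,X)$ preserves Reedy cofibrancy, i.e.\ sends every simplicial $G$-CW-complex $A_\sbt$ to a Reedy cofibrant simplicial $G$-space. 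This is the analogue, for $\bF^\SI$ in place of $^\times\bF_G^{\bN_G}$, of \cite[Proposition 9.13]{MMO}, which is exactly the result quoted in the proof of \myref{proper}.

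First I would identify $(\bP_\sF^{\sW_G} B(\bF^\SI,\bF^\SI,X))(A)$ explicitly. Since prolongation $\bP_\sF^{\sW_G}$ is a left adjoint it commutes with geometric realization and with the bar construction, so for a based $G$-CW complex $A$ this $\sW_G$-$G$-space is the realization of the simplicial $G$-space $q\mapsto (\bP_\sF^{\sW_G} B_q(\bF^\SI,\bF^\SI,X))(A)$, whose $q$-simplices are the wedges
\[
\bigvee_{n_q,\dots,n_0} A^{n_q}\sma_{\SI_{n_q}} \sF(\mb{n_{q-1}},\mb{n_q})\sma_{\SI_{n_{q-1}}}\cdots\sma_{\SI_{n_1}}\sF(\mb{n_0},\mb{n_1})\sma_{\SI_{n_0}} X(\mb{n_0}),
\]
exactly as displayed after \myref{mach2} with $A$ in place of $S^V$. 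When $A=|K_\sbt|$ is itself a realization, I would use that products and orbits commute with realization to rewrite this bidegreewise, so that the whole thing is the realization (over the external simplicial direction from $K_\sbt$) of a simplicial object whose terms are of the above form with $A$ replaced by a $G$-CW complex of the form $|K_\sbt|$ in each external degree. Then I would invoke \myref{Reedycof}: that lemma shows precisely that the \emph{internal} (bar) simplicial direction is Reedy cofibrant, because each degeneracy map is a wedge of maps obtained by inserting $\id_{\mb{k}}$ into one factor of a smash of the above type, and these are $G$-cofibrations by the nondegenerate-basedness of $A^{n}$ as a $(G\times\SI_n)$-space (\cite[Appendix, Proposition 2.6]{BVbook}) together with the whiskering assumption on the $X(\mb{k})$ and the discreteness of the mapping spaces of $\sF$. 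Combining the Reedy cofibrancy in the internal direction with the standard fact that geometric realization of a Reedy cofibrant simplicial object in Reedy cofibrant simplicial $G$-spaces is again Reedy cofibrant (the argument of \cite[Proposition 9.13]{MMO}), one concludes that $(\bP_\sF^{\sW_G} B(\bF^\SI,\bF^\SI,X))(|K_\sbt|)$ is Reedy cofibrant, i.e.\ the prolonged $\sW_G$-$G$-space preserves Reedy cofibrancy. Hence $B(\bF^\SI,\bF^\SI,X)$ is proper, and by \myref{cof} so is $\bP_\sF^{\sF_G} B(\bF^\SI,\bF^\SI,X)$.

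The main obstacle is the bookkeeping in the step where realization in the external (CW) direction is interchanged with realization in the internal (bar) direction and with the $\SI_n$-orbit quotients: one must be careful that the orbit quotients $\sma_{\SI_n}$ do not destroy the nondegenerate-basedness that \myref{Reedycof} relies on, which is precisely why \myref{Reedycof} was stated with the $(G\times\SI_n)$-equivariant cofibration property established \emph{before} passage to orbits. Granting that, the proof is essentially a transcription of the argument for \cite[Proposition 9.13]{MMO} with the monad $^\times\bF_G^{\bN_G}$ replaced by $\bF^\SI$ and with \myref{Reedycof} supplying the input about degeneracies; no genuinely new homotopy-theoretic input is needed.
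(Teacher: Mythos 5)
Your overall strategy is pointed in the right direction — reduce to a Reedy cofibrancy claim, invoke \myref{Reedycof} for the bar direction, and model the argument on \cite[Proposition 9.13]{MMO} — but there is a concrete gap in the middle, and some of the bookkeeping around it is confused. The definition of properness asks you to show that for any based \emph{simplicial $G$-set} $A_\sbt$, the simplicial $G$-space $n\mapsto B(A_n^\bullet,\bF^\SI,X)$ is Reedy cofibrant, i.e.\ that each external degeneracy $s_i\colon A_n\rtarr A_{n+1}$ induces a $G$-cofibration $B(A_n^\bullet,\bF^\SI,X)\rtarr B(A_{n+1}^\bullet,\bF^\SI,X)$ of realized bar constructions. \myref{Reedycof} gives Reedy cofibrancy of $B_\sbt(A_n^\bullet,\bF^\SI,X)$ in the \emph{internal} bar direction, for each fixed $n$; it says nothing about the \emph{external} direction. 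The missing ingredient is \cite[Theorem 1.11]{MMO}: once source and target bar objects are Reedy cofibrant, it suffices that the external degeneracy induce a $G$-cofibration on each internal simplicial level $A_n^\bullet(\bF^\SI)^q X\rtarr A_{n+1}^\bullet(\bF^\SI)^q X$. That levelwise verification is the content-carrying step, and you never perform it. It works because $A_\sbt$ is a simplicial $G$-\emph{set}, so $s_i\colon A_n\rtarr A_{n+1}$ is an injection of discrete sets, whence $s_i\colon A_n^k\rtarr A_{n+1}^k$ is a $(G\times\SI_k)$-cofibration, and then \cite[Appendix, Lemma 2.3]{BVbook} shows that smashing with $\sF(\mathbf m,\mathbf k)\sma_{\SI_m}Y(\mathbf m)$ and passing to $\SI_k$-orbits preserves this. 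Your ``standard fact that geometric realization of a Reedy cofibrant simplicial object in Reedy cofibrant simplicial $G$-spaces is Reedy cofibrant'' does not supply this: as stated it has no hypothesis controlling the external degeneracies, and it is precisely that control that needs to be established.

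The ``main obstacle'' you identify — worrying that $\sma_{\SI_n}$ might destroy nondegenerate-basedness — is not where the difficulty lies, and the detour through ``preserves Reedy cofibrancy'' with $A=|K_\sbt|$ is both unnecessary and garbled (the parenthetical ``with $A$ replaced by a $G$-CW complex of the form $|K_\sbt|$ in each external degree'' does not parse, since $A_\sbt$ already is the simplicial object; there is no further realization to interchange). The paper does not invoke \myref{Reedy} or the ``preserves Reedy cofibrancy'' strengthening at all here; it works directly from the definition with $A_\sbt$ a simplicial $G$-set, which is in fact the easier formulation because it makes the external degeneracies injections of discrete sets and hence equivariant cofibrations for free. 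If you insist on the CW version, you would need a separate argument that $A_n^k\rtarr A_{n+1}^k$ is a $(G\times\SI_k)$-cofibration when $s_i$ is only a $G$-cofibration of $G$-CW complexes, which is genuinely harder than the discrete case and not something the paper needs or proves.
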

\begin{proof}
By \cite[Lemma~1.9]{MMO}, it suffices to show that for any based simplicial $G$-set $A_*$, each of the degeneracy maps 
$s_i\colon A_n\rtarr A_{n+1}$ induces a $G$-cofibration $B(A_n^\bullet,\bF^{\SI},X) \rtarr B(A_{n+1}^\bullet,\bF^{\SI},X)$. By \myref{Reedycof}, these bar constructions are realizations of Reedy cofibrant simplicial $G$-spaces, so that by \cite[Theorem 1.11]{MMO} it suffices to show that each map 
\[ A_n^\bullet(\bF^{\SI})^q X \rtarr A_{n+1}^\bullet(\bF^{\SI})^q X\]
is a $G$-cofibration. Taking $q=1$ for simplicity, this map is a wedge over $k$ and $m$ of maps
\begin{equation}\label{eq:DegenCofib}
A_n^{k} \sma_{\SI_k} \sF({\bf m, k}) \sma_{\SI_m} Y(\mathbf{m})
\xrtarr{s_i\sma \id \sma \id}
 A_{n+1}^{k} \sma_{\SI_k} \sF(\mathbf{m},\mathbf{k})\sma_{\SI_m} Y(\mathbf{m}) 
 \end{equation}
 Since $s_i\colon A_n \rtarr A_{n+1}$ is an injection of (discrete) sets, it follows that the function 
 $s_i\colon A_n^{k}\rtarr A_{n+1}^{k}$
is a $(G\times\SI_k)$-cofibration, so that the map \eqref{eq:DegenCofib} is a $G$-cofibration by \cite[Appendix, Lemma 2.3]{BVbook}.
\end{proof}
 
\subsection{The comparison of $\bS_G^{\SI}$ and $\bS_G^{\SI_G}$}\label{comparison}

The commutative diagram of inclusions of categories
\[ \xymatrix{
\SI \ar[r] \ar[d] & \SI_G \ar[d] \\
\sF  \ar[r] &  \sF_G}
\]
gives rise to a commutative diagram of forgetful functors
\[ \xymatrix{
\Fun(\SI, \sT_G) & \Fun(\SI_G,\sT_G) \ar[l]_-{\bU^{\SI_G}_\SI} \\
\Fun(\sF, \sT_G)  \ar[u]^{\bU^\sF_\SI}&  \Fun(\sF_G, \sT_G) \ar[u]_{\bU^{\sF_G}_{\SI_G}} \ar[l]^-{\bU^{\sF_G}_\sF}}
\]
The left adjoints given by categorical tensor products then make the following diagram 
commute up to natural isomorphism. 
\begin{equation}\label{square1} \xymatrix{
\Fun(\SI, \sT_G) \ar[r]^-{\bP_\SI^{\SI_G}} \ar[d]_-{\bP_\SI^\sF} & \Fun(\SI_G,\sT_G) \ar[d]^-{\bP_{\SI_G}^{\sF_G}}  \\
\Fun(\sF, \sT_G)  \ar[r]_-{\bP_\sF^{\sF_G}} &  \Fun(\sF_G, \sT_G) \ultwocell<\omit>{<0> \cong \,} }
\end{equation}

The proof of the following result is identical to that for the pair $(\bP_\sF^{\sF_G}, \bU^{\sF_G}_\sF)$ given in \cite[Theorem 2.30]{MMO}. 
While we no longer have Segal maps in the top pair of functor categories in (\ref{square1}), the respective notions of \gen-level equivalence and level
$G$-equivalence still make sense.

\begin{thm}  The adjoint pair $(\bP_\SI^{\SI_G},\bU^{\SI_G}_\SI)$ gives an equivalence of categories.  A map $f\in \Fun (\SI,\sT_G)$ 
is an \gen-level equivalence if and only if $\bP_\SI^{\SI_G}f$ is a level $G$-equivalence.
\end{thm}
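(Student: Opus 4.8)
The plan is to mirror, almost verbatim, the argument used in \cite[Theorem 2.30]{MMO} for the pair $(\bP_\sF^{\sF_G},\bU^{\sF_G}_\sF)$, since the formal ingredients are the same. First I would recall why the adjunction is in fact an equivalence: the inclusion $\SI \rtarr \SI_G$ is a full and faithful $\sV$-functor that is essentially surjective after accounting for the $G$-action — every object $(\mathbf{n},\al)$ of $\SI_G$ is, underlying, the object $\mathbf{n}$ of $\SI$, and the $G$-action on $\SI_G(\al,\be)$ is by conjugation, so that a $G\sT$-functor out of $\SI_G$ is determined by its restriction to $\SI$ together with the conjugation action already recorded in the enrichment. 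Concretely, for $Z\in \Fun(\SI,\sT_G)$ the counit $\bP_\SI^{\SI_G}\bU^{\SI_G}_\SI Z \rtarr Z$ and the unit $W \rtarr \bU^{\SI_G}_\SI \bP_\SI^{\SI_G} W$ for $W\in\Fun(\SI_G,\sT_G)$ are isomorphisms because the left Kan extension along a fully faithful functor is fully faithful on the nose; here the left Kan extension formula $(\bP_\SI^{\SI_G}W)(\be) = \coprod_d \SI_G(d,\be)\otimes_{\SI} W(d)$ collapses, since $\SI_G(d,\be)$ is empty unless $|d| = |\be|$ and is then $\SI_n$ with its conjugation action, to just $W(\be)$ with the correct $G$-action. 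I would spell this out exactly as in the source text for $\sF$ versus $\sF_G$, replacing $\sF$ by $\SI$ throughout.

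For the second assertion, about equivalences, I would argue that under the equivalence of categories the notion of \gen-level equivalence on $\Fun(\SI,\sT_G)$ is, by construction, transported to the notion of level $G$-equivalence on $\Fun(\SI_G,\sT_G)$. Recall from \myref{equivalences}(i) that $f\colon Z\rtarr Z'$ in $\Fun(\SI,\sT_G)$ is an \gen-level equivalence iff $f_n^{\LA}$ is a weak equivalence for all $\LA\in\bF_n$, i.e. all subgroups of $G\times\SI_n$ meeting $\SI_n$ trivially, and from \myref{equivalences}(ii) that $j\colon W\rtarr W'$ in $\Fun(\SI_G,\sT_G)$ is a level $G$-equivalence iff $j_\al^H$ is a weak equivalence for all finite $G$-sets $\al$ and all $H\subseteq G$. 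The key point, exactly as in \cite[Theorem 2.30]{MMO}, is the bijection between pairs $(\al,H)$ — where $\al\colon G\rtarr\SI_n$ and $H\subseteq G$ — and subgroups $\LA\in\bF_n$: given $(\al,H)$, set $\LA = \{(h,\al(h)^{-1}) : h\in H\}\subseteq G\times\SI_n$, which meets $\SI_n$ trivially; conversely a $\LA\in\bF_n$ projects isomorphically onto its image $H$ in $G$ and the inverse of this projection, followed by the second projection, defines $\al|_H$ (extended arbitrarily, or rather: $\LA$ records exactly the graph of a homomorphism on a subgroup). Under this correspondence, for $W = \bP_\SI^{\SI_G}$ of the source, the $G$-fixed points $W(\al)^H$ are identified with $Z_n^{\LA}$, because the conjugation $G$-action on the $\SI_n$-space $Z_n$ restricted along $h\mapsto(h,\al(h))$ has $H$-fixed points exactly $Z_n^{\LA}$. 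Hence $f$ is an \gen-level equivalence iff $\bP_\SI^{\SI_G}f$ is a level $G$-equivalence, and I would simply cite \cite[Theorem 2.30]{MMO} for the bookkeeping, noting that the absence of Segal maps is irrelevant since both notions of equivalence are defined purely levelwise.

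The main obstacle — and it is a mild one — is making precise that the argument of \cite[Theorem 2.30]{MMO} did not secretly use any structure of $\sF$ or $\sF_G$ beyond the object sets and the conjugation enrichment of the morphism $G$-sets of the ambient groupoid. In \cite{MMO} the proof is phrased for the full categories $\sF$, $\sF_G$ and uses the indexing by natural numbers $\bN$ versus finite $G$-sets $\bN_G$; here we restrict the morphisms to isomorphisms, so $\SI$ has object set $\bN$ and morphism $G$-sets $\SI_n$ (trivial $G$-action), while $\SI_G$ has object set $\bN_G$ and morphism $G$-sets $\SI_n$ (conjugation $G$-action). Since the comparison of equivalences in \cite{MMO} proceeds one object $\mathbf n$ (resp. one $G$-set $\al$) at a time, using only that $\bP_\sF^{\sF_G}$ is given by the Kan extension formula and that fixed points commute with the relevant coproducts and quotients (\cite[Lemma~III.1.6]{MM}), the identical argument applies, and I would state this explicitly rather than reproduce the computation. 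I would therefore keep the proof to a few lines: invoke fully-faithfulness of left Kan extension along the fully faithful $\SI\rtarr\SI_G$ for the equivalence, and invoke \cite[Theorem 2.30]{MMO} together with the $(\al,H)\leftrightarrow\LA$ dictionary for the statement about equivalences.
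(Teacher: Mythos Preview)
Your proposal is correct and takes essentially the same approach as the paper, which simply states that the proof is identical to that of \cite[Theorem~2.30]{MMO} (and remarks afterward that the argument there, actually written for $(\bP_\PI^{\PI_G},\bU^{\PI_G}_\PI)$, goes through for any subcategory of $\sF$ containing $\SI$). One small slip: you have swapped the roles of $Z$ and $W$ when describing the unit and counit---the unit $W\to \bU^{\SI_G}_\SI\bP_\SI^{\SI_G}W$ is for $W\in\Fun(\SI,\sT_G)$ and the counit for $Z\in\Fun(\SI_G,\sT_G)$---and you should be consistent about whether $\LA$ is the graph of $\al$ or of $\al^{-1}$; neither affects the substance of the argument.
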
 

\begin{rem}\mylabel{Nfails}
The proof of \cite[Theorem 2.30]{MMO} is given for the pair of adjoint functors $(\bP_\PI^{\PI_G},\bU^{\PI_G}_\PI)$. The same proof works with $\PI$ replaced by $\sF$, $\SI$, or any subcategory of $\sF$ that contains $\SI$. The proof does not work with $\PI$ replaced by $\bN$. Since
there are no non-identity morphisms in $\bN_G$,  $\bP_\bN^{\bN_G}X(\al)$ is a point when $\al$ is a non-trivial $G$-set.
\end{rem}

Since $(\bU^{\SI_G}_\SI, \bP_\SI^{\SI_G})$ and $( \bU^{\sF_G}_\sF, \bP_\sF^{\sF_G})$ are equivalences of categories, the following square also commutes up to natural isomorphism

\begin{equation}\label{square2} \xymatrix{
\Fun(\SI, \sT_G) \ar[r]^-{\bP^{\SI_G}_\SI} & \Fun(\SI_G,\sT_G)  \\
\Fun(\sF, \sT_G)  \ar[u]^{\bU^\sF_\SI} \ar[r]_-{\bP^{\sF_G}_\sF}
  &  \Fun(\sF_G, \sT_G) \ar[u]_{\bU^{\sF_G}_{\SI_G}} \ultwocell<\omit>{<0>  \cong \,} }  \end{equation}

Combining squares (\ref{square1}) and (\ref{square2}) and remembering that our monads are composites of the form $\bU\bP$, we obtain 
a natural isomorphism 
\[\xymatrix{
\Fun(\SI, \sT_G) \ar[d]_-{\bF^{\SI}} \ar[r]^-{\bP^{\SI_G}_\SI} & \Fun(\SI_G,\sT_G)\ar[d]^-{\bF_G^{\SI_G}}  \\
\Fun(\SI, \sT_G) \ar[r]_-{\bP^{\SI_G}_\SI}   & \Fun(\SI_G,\sT_G)  \ultwocell<\omit>{<0>  \cong \,} 
}\]      

This isomorphism implies the following precise comparison of the symmetric machines defined in Definitions \ref{mach2} and \ref{mach3}.

\begin{thm}\mylabel{MAIN1}
Let $X$ be an $\sF$-$G$-space. Then there is a natural isomorphism of $\sF_G$-$G$-spaces
\begin{equation}\label{LR} 
B(\bF_G^{\SI_G}, \bF_G^{\SI_G}, \bP_\sF^{\sF_G}X)\cong \bP_\sF^{\sF_G} B(\bF^{\SI}, \bF^{\SI}, X)
\end{equation}
and $B(\bF_G^{\SI_G}, \bF_G^{\SI_G}, \bP_\sF^{\sF_G}X)$ is proper. Applying  $\bP_{\sF_G}^{\sW_G}$ 
and restricting to spheres, there is a natural isomorphism of $G$-spectra
$$ \bS_G^{\SI_G} \bP_\sF^{\sF_G}X \iso \bS_G^{\SI} X.  $$
Equivalently, for an $\sF_G$-$G$-space $Y$, there is a natural isomorphism of $G$-spectra
$$ \bS_G^{\SI_G} Y \iso \bS_G^{\SI} \bU_\sF^{\sF_G}Y.  $$
\end{thm}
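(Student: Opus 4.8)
The plan is to derive everything from the commutative diagram of monads obtained by pasting squares (\ref{square1}) and (\ref{square2}). The key observation is that both the monad $\bF^{\SI}$ on $\Fun(\SI,\sT_G)$ and the monad $\bF_G^{\SI_G}$ on $\Fun(\SI_G,\sT_G)$ are built as $\bU\bP$ from the respective adjunctions relating these functor categories to $\Fun(\sF,\sT_G)$ and $\Fun(\sF_G,\sT_G)$, and that all four of the relevant prolongation functors are (pairwise) equivalences of categories intertwined by natural isomorphisms. First I would record that the square
\[\xymatrix{
\Fun(\SI, \sT_G) \ar[d]_-{\bF^{\SI}} \ar[r]^-{\bP^{\SI_G}_\SI} & \Fun(\SI_G,\sT_G)\ar[d]^-{\bF_G^{\SI_G}}  \\
\Fun(\SI, \sT_G) \ar[r]_-{\bP^{\SI_G}_\SI}   & \Fun(\SI_G,\sT_G) \ultwocell<\omit>{<0>  \cong \,}
}\]
commutes up to natural isomorphism, and that moreover this isomorphism is compatible with the monad structure maps $\mu$ and $\eta$ (this follows because the two adjunctions whose composite gives the monad are related by the equivalences, so the counit and unit are transported correctly). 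Since $\bP^{\SI_G}_\SI$ is an equivalence, it carries the bar construction $B_\bullet(\bF^{\SI},\bF^{\SI},X)$, level by level, to $B_\bullet(\bF_G^{\SI_G},\bF_G^{\SI_G},\bP^{\SI_G}_\SI X)$; since geometric realization is computed levelwise and the equivalence commutes with it (colimits), this gives a natural isomorphism of $\Fun(\SI_G,\sT_G)$-objects, and then applying $\bP^{\sF_G}_{\SI_G}$ (using square (\ref{square1})) upgrades it to the isomorphism of $\sF_G$-$G$-spaces (\ref{LR}).

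Next I would handle properness. Since $X$ is an $\sF$-$G$-space, $B(\bF^{\SI},\bF^{\SI},X)$ is proper by \myref{FGspacesproper}, and then by the last sentence of \myref{FGspacesproper} (or directly by the observation at the end of \myref{cof}, which says $X'$ is proper iff $\bP_\sF^{\sF_G}X'$ is proper) the $\sF_G$-$G$-space $\bP_\sF^{\sF_G} B(\bF^{\SI},\bF^{\SI},X)$ is proper; by the isomorphism (\ref{LR}) just established, so is $B(\bF_G^{\SI_G},\bF_G^{\SI_G},\bP_\sF^{\sF_G}X)$. (For a general $\sF_G$-$G$-space $Y$ one sets $X = \bU_\sF^{\sF_G}Y$ and uses that $\bP_\sF^{\sF_G}\bU_\sF^{\sF_G}Y\cong Y$ since (\ref{ProlongPair}) is an equivalence.)

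Finally I would pass to $\sW_G$-$G$-spaces and to $G$-spectra. Applying the prolongation functor $\bP_{\sF_G}^{\sW_G}$ to both sides of (\ref{LR}) and using the factorization $\bP_{\sF}^{\sW_G}\cong \bP_{\sF_G}^{\sW_G}\circ\bP_{\sF}^{\sF_G}$ (the factorization diagram in \S\ref{inv}), the right-hand side becomes $\bP_{\sF}^{\sW_G}B(\bF^{\SI},\bF^{\SI},X)$; evaluating at $A=S^V$ and invoking \myref{BAotimes} identifies both sides with the $\sW_G$-$G$-spaces $B^{\SI_G}\bP_\sF^{\sF_G}X$ and $B^{\SI}X$ of (\ref{drei}), yielding the isomorphism $\bS_G^{\SI_G}\bP_\sF^{\sF_G}X \iso \bS_G^{\SI}X$ of $G$-spectra, which is natural in $X$. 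The equivalent statement for $Y$ follows by substituting $X=\bU_\sF^{\sF_G}Y$ and using $\bP_\sF^{\sF_G}\bU_\sF^{\sF_G}Y\cong Y$.

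The main obstacle I anticipate is not the abstract formalism but the bookkeeping needed to check that the natural isomorphism in the monad square is genuinely a \emph{monad} isomorphism — i.e., that it is compatible with $\mu$ and $\eta$ — so that it actually induces an isomorphism of simplicial objects $B_\bullet(\bF^{\SI},\bF^{\SI},X)\cong B_\bullet(\bF_G^{\SI_G},\bF_G^{\SI_G},\bP^{\SI_G}_\SI X)$ (matching all faces and degeneracies, not just the objects of $q$-simplices). This reduces to the fact that when two adjunctions are related by equivalences on both sides, the induced monads are isomorphic \emph{as monads}; this is standard, but since the paper is being careful about enriched/pointset details I would spell out that the equivalences $\bP^{\SI_G}_\SI$, $\bP^{\sF_G}_\sF$ intertwine the units and counits, whence the generated monads correspond. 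A secondary, purely technical point is confirming that $\bP^{\SI_G}_\SI$ (being a left adjoint, in fact an equivalence) commutes with the geometric realization used to form the bar construction, which is immediate since realization is a colimit.
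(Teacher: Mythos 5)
Your proof is correct and takes essentially the same route as the paper: both transport the bar construction across the equivalence $\bP_\SI^{\SI_G}$ using the monad isomorphism $\bF_G^{\SI_G}\bP_\SI^{\SI_G}\cong\bP_\SI^{\SI_G}\bF^\SI$ obtained by pasting squares (\ref{square1}) and (\ref{square2}), then apply $\bP_{\SI_G}^{\sF_G}$ to land in $\sF_G$-$G$-spaces, with properness and the passage to $G$-spectra following formally. The ``main obstacle'' you flag (that the natural isomorphism is a monad isomorphism) is exactly the content of the composite natural isomorphism the paper records before the theorem, so the two arguments agree in substance.
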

\begin{proof}
To be pedantically precise, the left hand side of (\ref{LR}) is really 
 $$\bP_{\SI_G}^{\sF_G} B(\bF_G^{\SI_G}, \bF_G^{\SI_G}, \bU^{\sF_G}_{\SI_G} \bP_\sF^{\sF_G}X) \cong \bP_{\SI_G}^{\sF_G} B(\bF_G^{\SI_G}, \bF_G^{\SI_G}, \bP_\SI^{\SI_G}\bU^{\sF}_{\SI} X).$$
Since $\bF_G^{\SI_G}\bP_\SI^{\SI_G}\cong \bP_\SI^{\SI_G}\bF^\SI$,
$$B(\bF_G^{\SI_G}, \bF_G^{\SI_G}, \bP_\SI^{\SI_G}\bU^{\sF}_{\SI} X)\cong \bP_\SI^{\SI_G}B(\bF^\SI, \bF^\SI, \bU^{\sF}_{\SI} X).$$
Applying $ \bP_{\SI_G}^{\sF_G} $, we get
$$ \bP_{\SI_G}^{\sF_G} \bP_\SI^{\SI_G}B(\bF^\SI, \bF^\SI, \bU^{\sF}_{\SI} X)\cong \bP_\sF^{\sF_G} \bP_\SI^\sF B(\bF^\SI, \bF^\SI, \bU^{\sF}_{\SI} X),$$
which is exactly what the right hand side of (\ref{LR}) really means.  The statement about properness follows, since the isomorphism is obtained by applying geometric realization to an isomorphism of simplicial $G$-spaces and the right side of (\ref{LR}) is proper by \myref{FGspacesproper} . The last statements follow directly. 
\end{proof}

We find these isomorphisms quite remarkable.  If we write out the definitions explicitly and try to compare the bar constructions explicitly,
we find the task quite forbidding.  We emphasize that, as a result of \myref{Nfails}, the analogous theorem does \emph{not} hold if 
we replace $\SI$ and $\SI_G$ with $\bN$ and $\bN_G$.

\subsection{The comparison of $\bS_G^{\bN_G}$ and $\bS_G^{\SI_G}$}\label{finalcomp}
Arguing as in the previous section, the inclusion of domain categories 
$\bN_G\rtarr \SI_G$ gives rise to  a natural transformation (not an isomorphism)
\[\xymatrix{
\Fun(\bN_G, \sT_G) \ar[d]_-{\bF_G^{\bN_G}} \ar[r]^-{\bU^{\bN_G}_{\SI_G}} & \Fun(\SI_G,\sT_G)\ar[d]^-{\bF_G^{\SI_G}}  \\
\Fun(\bN_G, \sT_G) \ar[r]_-{\bU^{\bN_G}_{\SI_G}}  \urtwocell<\omit>{<0> \, q} 
 & \Fun(\SI_G,\sT_G)  }\]      
For an $\sF_G$-$G$-space $Y$, $q$ induces a map of $\sF_G$-$G$-spaces 
$$q\colon B(\bF_G^{\bN_G},\bF_G^{\bN_G}, Y) \rtarr B(\bF_G^{\SI_G}, \bF_G^{\SI_G}, Y).$$
This gives a formal construction of the natural quotient map $q$, and the following diagram of $\sF_G$-$G$-spaces commutes.
\begin{equation}\label{keydia}
\xymatrix{
B(\bF_G^{\bN_G},\bF_G^{\bN_G}, Y)\ar[dr]_-\epz \ar[rr]^-{q} & & \ar[dl]^-\epz B(\bF_G^{\SI_G}, \bF_G^{\SI_G}, Y)\\
&  Y &\\}
\end{equation}
The maps $\epz$ are level $G$-equivalences, hence so is $q$.  Since the bar constructions are proper, by \myref{proper} and \myref{MAIN1}, the invariance theorem applies to give the following conclusion. 

\begin{thm}\mylabel{MAIN2}
For any $\sF_G$-$G$-space $Y$, the map $q$  of (\ref{keydia}) is a level $G$-equivalence of $\sF_G$-$G$-spaces.
Therefore, applying  $\bP_{\sF_G}^{\sW_G}$ and restricting to spheres, $q$ induces a level $G$-equivalence of $G$-spectra
$$ \shimsma Y\rtarr \bS_G^{\SI_G}Y.$$
\end{thm}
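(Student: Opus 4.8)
The plan is to reduce the statement to the invariance theorem (\myref{inv2}) by producing the diagram \eqref{keydia} and checking its three ingredients: that $q$ is well-defined as a map of $\sF_G$-$G$-spaces, that the two maps $\epz$ are level $G$-equivalences, and that the bar constructions are proper. The map $q$ is constructed formally: the inclusion of domain categories $\bN_G\rtarr \SI_G$ induces a natural transformation $q\colon \bF_G^{\bN_G}\bU^{\bN_G}_{\SI_G}\Rightarrow \bU^{\bN_G}_{\SI_G}\bF_G^{\SI_G}$ (equivalently, the quotient identifying the $\SI_n$-orbits), and since it is compatible with the monad structures it passes through the bar construction to give a map of simplicial $G$-spaces, hence of $\sF_G$-$G$-spaces after realization and prolongation along $\bP_{\SI_G}^{\sF_G}$. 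Commutativity of \eqref{keydia} is immediate because both augmentations $\epz$ are induced by iterated multiplication $\mu$, and $q$ respects $\mu$.

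Next I would verify the two legs of the triangle. Each $\epz\colon B(\bE,\bE,Y)\rtarr Y$ (for $\bE = \bF_G^{\bN_G}$ and $\bE = \bF_G^{\SI_G}$ respectively, and more precisely for the reduced $\sma$-variants) is a level $G$-equivalence by the standard extra-degeneracy argument recalled in \S\ref{formal}: $\et\colon Y\rtarr B(\bE,\bE,Y)$ is a homotopy inverse. This is exactly the reduced analogue of the identity $\epz\colon B(\bE,\bE,X)\rtarr X$ being a homotopy equivalence, applied levelwise. By the two-out-of-three property in the triangle \eqref{keydia}, $q$ is then a level $G$-equivalence of $\sF_G$-$G$-spaces. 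Properness of both bar constructions is already in hand: $B(\bF_G^{\bN_G}, \bF_G^{\bN_G}, Y)$ is proper by \myref{proper}, and $B(\bF_G^{\SI_G}, \bF_G^{\SI_G}, Y)$ is proper by \myref{MAIN1} (it is isomorphic to $\bP_\sF^{\sF_G}B(\bF^\SI,\bF^\SI,X)$ with $X = \bU^{\sF_G}_\sF Y$, which is proper by \myref{FGspacesproper}).

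Finally I would feed this into \myref{inv2}: since $q$ is a level $G$-equivalence of proper $\sF_G$-$G$-spaces, applying $\bP_{\sF_G}^{\sW_G}$ yields a map $(\bP_{\sF_G}^{\sW_G}q)(A)$ that is a $G$-equivalence for every based $G$-CW complex $A$; here I would invoke the isomorphism \eqref{BAotimes} so that $(\bP_{\sF_G}^{\sW_G}B(\bE,\bE,Y))(A)\iso B(A^\bullet,\bE,Y)$, matching the $\sW_G$-$G$-spaces $B^{\bN_G}Y$ and $B^{\SI_G}Y$ appearing in the definitions of $\shimsma$ and $\bS_G^{\SI_G}$. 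Restricting to $A = S^V$ then gives the claimed level $G$-equivalence of $G$-spectra $\shimsma Y\rtarr \bS_G^{\SI_G}Y$.

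The main obstacle is not any single hard estimate but making the bookkeeping precise: one must be careful that the formal natural transformation $q$ really is a map of monad-algebra data in the reduced $\sma$-setting (not merely the unreduced $\times$-setting where it is cleanest), and that prolongation along $\bP_{\SI_G}^{\sF_G}$ versus $\bP_\sF^{\sF_G}$ is applied consistently, exactly the kind of ``pedantically precise'' chase carried out in the proof of \myref{MAIN1}. Once that identification is nailed down, the argument is purely formal, resting entirely on the extra-degeneracy equivalence and the invariance theorem.
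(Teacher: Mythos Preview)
Your proposal is correct and follows essentially the same route as the paper: both $\epz$ are level $G$-equivalences by the extra-degeneracy argument, hence $q$ is by two-out-of-three; properness of the two bar constructions comes from \myref{proper} and \myref{MAIN1}; and then the invariance theorem (\myref{inv2}), together with \eqref{BAotimes}, yields the level $G$-equivalence of $G$-spectra. The extra detail you supply (the formal construction of $q$ and the explicit appeal to \eqref{BAotimes}) is exactly the bookkeeping the paper leaves implicit.
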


Taking $Y= \bP_{\sF}^{\sF_G}X$ for an $\sF$-$G$-space $X$ and combining Theorems \ref{smatimes1}, \ref{MAIN1} and \ref{MAIN2}, we see that the Segal machine $ \shimtimes \bP_\sF^{\sF_G}X $ of \cite{Shim, MMO} is equivalent to our new symmetric Segal machine  
$\bS_G^{\SI}X$.

\begin{rem}\mylabel{PIPIG}  The reader of \cite{MMO} may notice that when dealing with the operadic infinite loop space machine there we constructed appropriate monads using $\PI$ and $\PI_G$ 
rather than $\SI$ and $\SI_G$.   We could have used $\PI$ and $\PI_G$ here too.  However, it would be finicky to adapt the
proof of \myref{FGspacesproper} to show that the bar constructions that are relevant here are proper and thus to justify application of the invariance theorem using the resulting monads.  The variant machines 
that would be obtained offer no apparent advantages over those already constructed.
\end{rem}

\section{Symmetric monoidal properties of the symmetric Segal machine}\label{SymMon}

We recall the monoidal structures on the categories of $\sF$-$G$-spaces and of orthogonal $G$-spectra and 
state the theorem about the symmetric monoidal Segal machine in \S\ref{state}.  For definiteness, we 
take $\bS_G$ to be $\symfsma$ in this section and remove the superscripts from the notation.  We prove that 
$\bS_G$ is lax monoidal  in  \S\ref{monoidal}.  This proof can be adapted for any of the other variants. We prove that 
$\shim$ is lax symmetric monoidal in \S\ref{symmon}. There it is essential to use the symmetric machine.

\subsection{Recollections about product structures}\label{state} 
The category $\sF$ is permutative with respect to the smash product $\sma$. Thus, we can define a symmetric 
monoidal product on $\sF$-$G$-spaces by using Day convolution; see e.g. \cite[\S21]{MMSS}. For $\sF$-$G$-spaces $X$ and $Y$, we have the evident levelwise external smash product 
$X \barwedge Y$, defined as the composite\footnote{$\sF\times \sF$ and $G\sT \times G\sT$ are understood in the enriched sense, so that hom objects are smash products of the hom objects of the two variables.} 
\[
\sF \times \sF \xrightarrow{X \times Y} G\sT \times G\sT \xrightarrow{\sma} G\sT.
\]
We define $X\sma Y$ as the left Kan extension indicated in the diagram
\[ \xymatrix{
\sF\times \sF \ar[d]_{\sma} \ar[r]^-{X\barwedge Y} & G\sT. \\
\sF \ar[ur]_{X\sma Y} } 
\]
It is characterized by the adjunction 
\begin{equation}\label{Day}
{\Fun}(\sF,G\sT)(X\sma Y, Z) \iso {\Fun}(\sF\times \sF,G\sT)(X\barwedge Y,Z\circ \sma),
\end{equation}
where $Z\in {\Fun}( \sF,G\sT)$.
Its unit is the $\sF$-$G$-space $\mathbf{I}$ given by the inclusion 
$\sF\subset \sT\subset G\sT$.
As motivation, pairings $X\barwedge Y\rtarr Z\circ \sma$ of $\sF$-$G$-spaces appear ubiquitously
in nature, as was already noted nonequivariantly in \cite{Maypair}.\footnote{Regrettably, its author did not then know about Day convolution,
which converts such pairings to maps of $\sF$-$G$-spaces.}

The category of orthogonal $G$-spectra is also symmetric monoidal \cite[II.3.1]{MM}, and its smash product
is defined similarly.  Recall the definition of orthogonal $G$-spectra and of the external smash
product $\barwedge$ from \cite[Definition 1.18]{MMO}. Recall too that the structure maps of $X$ give a map 
of $\sI_G$-$G$-spaces $\si\colon X\barwedge S\rtarr X\com \oplus$.  For $G$-spectra $X$ and $Y$, 
$X\sma Y$ starts from the external smash product $X\barwedge Y$ and its left Kan extension indicated in the diagram
\[ \xymatrix{
\sI_G\times \sI_G \ar[d]_{\oplus} \ar[r]^-{X\barwedge Y} & \sT_G. \\
\sI_G \ar[ur]_{X\sma_{\sI_G} Y} \\} 
\]
This left Kan extension is characterized by the adjunction 
\begin{equation}\label{Day2}
{\Fun}(\sI_G,\sT_G)(X\sma_{\sI_G} Y, Z) \iso {\Fun}(\sI_G\times \sI_G,\sT_G)(X\barwedge Y,Z \com \oplus),
\end{equation}
where $Z\in {\Fun}( \sI_G,\sT_G)$.
In particular, the structure map $\si$ induces a map 
\[X\sma_{\sI_G} S \rtarr X.\]
 The smash product $X\sma Y$ 
is the coequalizer displayed in the diagram
\begin{equation}\label{coeq}
\xymatrix@1{  X\sma_{\sI_G} S\sma _{\sI_G} Y \ar@<.5ex>[r] \ar@<-.5ex>[r] & X\sma_{\sI_G} Y \ar[r] & X\sma Y,}\\
\end{equation}
where the two arrows are induced by the action of $S$ on $X$ and $Y$ (using $S\sma_{\sI_G} Y\iso Y\sma_{\sI_G} S)$.
The action of $S$ on $X$ or $Y$ induces the required action of $S$ on $X\sma Y$.
For the unit property, $X\sma_{\sI_G} S$ becomes isomorphic to $X$ on passage to coequalizers.

\begin{defn} For $G$-spectra  $X$, $Y$, and $Z$, a pairing $f\colon X\sma Y\rtarr Z$ is a 
map of $G$-spectra.
\end{defn}  

\begin{thm}\mylabel{SymMonThm}
The functor $\shim$ from $\sF$-$G$-spaces to orthogonal $G$-spectra is lax symmetric monoidal.
Therefore a pairing $f\colon X\sma Y \rtarr Z$ of $\sF$-$G$-spaces functorially determines a
pairing $\shim f\colon \shim X\sma \shim Y\rtarr \shim Z$ of $G$-spectra.
\end{thm}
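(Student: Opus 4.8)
The plan is to establish that $\shim = \symfsma$ is built from functors that are each lax symmetric monoidal, and then to chase the symmetry through the bar construction, which is where the real work lies. First I would recall the standard fact that the prolongation functor $\bP^{\sW_G}_\sF$ is lax symmetric monoidal: it is a left Kan extension along the strong symmetric monoidal inclusion $(\sF,\sma)\hookrightarrow(\sW_G,\sma)$, and the Day-convolution formalism of \cite[\S21]{MMSS} shows that such a left Kan extension is \emph{strong} symmetric monoidal. Likewise, restriction of a $\sW_G$-$G$-space to spheres $S^V$, which produces an orthogonal $G$-spectrum, is lax symmetric monoidal: a pairing $Z\barwedge Z'\to Z''\circ\sma$ of $\sW_G$-$G$-spaces restricts on $G$-spheres to the external pairing needed for the smash product of orthogonal $G$-spectra as in \eqref{coeq}, because $S^V\sma S^W\cong S^{V\oplus W}$ matches $\oplus$ on $\sI_G$. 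So the entire issue is whether the functor $X\mapsto B(\bF^\SI,\bF^\SI,X)$, or equivalently $X\mapsto B^\SI X$ as a $\sW_G$-$G$-space, carries pairings of $\sF$-$G$-spaces to pairings in a way compatible with the symmetry.

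The key step is therefore a multiplicativity statement for the symmetric bar construction: given $\sF$-$G$-spaces $X$, $Y$, I would construct a natural map
\[
B(\bF^\SI,\bF^\SI,X)\sma B(\bF^\SI,\bF^\SI,Y)\rtarr B(\bF^\SI,\bF^\SI,X\sma Y)
\]
(or, levelwise on $\sW_G$-$G$-spaces, an external version) and verify it is associative, unital, and \emph{symmetric}. On $q$-simplices the source involves a double indexing over tuples $(m_0,\dots,m_q)$ and $(n_0,\dots,n_q)$ of the form
\[
\sF(\mathbf m_{q-1},\mathbf m_q)\sma_{\SI}\cdots\sma_{\SI}X(\mathbf m_0)\ \sma\ \sF(\mathbf n_{q-1},\mathbf n_q)\sma_{\SI}\cdots\sma_{\SI}Y(\mathbf n_0),
\]
and the target is indexed over single tuples $(k_0,\dots,k_q)$. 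The comparison map is induced by the monoidal structure of $\sF$ under $\sma$ (using $\mathbf m\sma\mathbf n$ for the coordinatewise product, which carries $\SI_m\times\SI_n\to\SI_{mn}$) together with the universal pairing $X\barwedge Y\to (X\sma Y)\circ\sma$ of Day convolution. Associativity and unitality are formal from the corresponding properties of $\sma$ on $\sF$ and of Day convolution; one checks compatibility with faces and degeneracies levelwise and then passes to geometric realization, which is strong symmetric monoidal on based simplicial $G$-spaces.

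The hard part — and the reason the \emph{symmetric} variant is needed — is verifying that the comparison map respects the symmetry isomorphism. On $q$-simplices, swapping the two bar-construction factors introduces a shuffle of the $\sF$-morphism strings and a block permutation of the coordinates, i.e.\ the block transposition $\tau_{m,n}\in\SI_{mn}$ interchanging the two tensor factors of $\mathbf m\sma\mathbf n$. For the ordinary bar construction $B^\bN$ this block permutation is \emph{not} absorbed anywhere: there is no symmetric-group action to kill it, so the diagram comparing the two orders of smashing fails to commute, exactly as the failure of $\SI$-equivariance of $\et$ in \myref{why-not-N}. In the symmetric bar construction $B^\SI$, however, every factor $\sF(\mathbf k_{i-1},\mathbf k_i)$ and $X(\mathbf k_0)$ is taken $\sma_{\SI}$ over the relevant symmetric groups, so the block permutation $\tau_{m,n}$ acts trivially after passage to orbits; one spells out that the two composites $X\sma Y\to\cdots$ and $Y\sma X\to\cdots$ agree modulo precisely the $\SI$-action that has been quotiented out. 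I would isolate this as a lemma: for each $q$ the two maps out of the $q$-simplices of $B^\SI X\sma B^\SI Y$ into the $q$-simplices of $B^\SI(X\sma Y)$ obtained via the two braidings coincide. Granting this lemma, the coherence axioms for a lax symmetric monoidal functor reduce to the already-known coherence of Day convolution on $\sF$-$G$-spaces and on orthogonal $G$-spectra, plus the compatibility of geometric realization and of $\bP^{\sW_G}_\sF$ and sphere-restriction with their symmetry isomorphisms, all of which are standard. The final sentence of the theorem — that a pairing $f\colon X\sma Y\to Z$ determines $\shim f\colon\shim X\sma\shim Y\to\shim Z$ — is then immediate by applying the lax symmetric monoidal structure map followed by $\shim f$.
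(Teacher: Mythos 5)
Your proposal is correct and takes a genuinely different, more modular route than the paper. The paper constructs the structure map $\varphi$ directly at the spectrum level: it writes down the external pairing
\[
B\bigl((S^V)^{\bullet}, \bF^{\SI}, X\bigr)\sma B\bigl((S^W)^{\bullet}, \bF^{\SI}, Y\bigr)\rtarr B\bigl((S^{V\oplus W})^{\bullet}, \bF^{\SI}, X\sma Y\bigr)
\]
on $q$-simplices via the shuffle map and the lexicographic identification $\mathbf{m}\sma\mathbf{n}\cong\mathbf{mn}$, passes through the Kan extension and coequalizer defining $\sma$ on $G$-spectra, and then checks symmetry in \S\ref{symmon} by tracking block permutations through the $\SI$-orbits. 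You instead factor $\shim$ through the natural isomorphism $(B^{\SI}X)(A)\iso A^{\bullet}\otimes_{\bF^{\SI}}B(\bF^{\SI},\bF^{\SI},X)$ from (\ref{BAotimes}), reducing the theorem to (i) the endofunctor $B(\bF^{\SI},\bF^{\SI},-)$ on $\sF$-$G$-spaces being lax symmetric monoidal for Day convolution, and (ii) the formal lax symmetric monoidality of $\bU_{G\sS}\bP_{\sF}^{\sW_G}$, which the paper itself records in \myref{conceptuallax} with a citation to \cite[Proposition 3.3]{MMSS}. This is a cleaner separation of the formal part from the computational part. The computational content — that the block permutations $\tau_{m_i,n_i}\in\SI_{m_i n_i}$ produced by swapping the two bar-construction factors are absorbed after passage to $\SI$-orbits, and that this is exactly what $B^{\bN}$ fails to do — is the same insight that drives the paper's \S\ref{symmon}, so the real work is unchanged; you have just relocated it from the $\sI_G$ level to the $\sF$ level. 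Two small cautions: phrasing the prolongation $\bP_{\sF}^{\sW_G}$ alone as ``strong symmetric monoidal'' flirts with size issues for Day convolution on the large category $\sW_G$-$G$-spaces, so it is safer to follow the paper and treat the composite $\bU_{G\sS}\bP_{\sF}^{\sW_G}$ (landing in orthogonal $G$-spectra) as the lax symmetric monoidal functor; and the analogy to \myref{why-not-N} is suggestive but not exact, since that remark addresses the failure of $\bF^{\bN}$ to produce a Segal machine at all, whereas the symmetry failure for $B^{\bN}$ occurs already nonequivariantly, as the paper notes at the start of \S\ref{symmon} in connection with Woolfson's machine.
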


\begin{rem}\mylabel{conceptuallax} The equivariant version of \cite[Proposition 3.3]{MMSS} shows that the conceptual variant of the Segal
machine, namely the functor $\bU_{G\sS}\bP_{\sF}^{\sW_G}$ of \cite[Definition 2.20]{MMO}, is also a lax symmetric monoidal 
functor from $\sF$-$G$-spaces to orthogonal $G$-spectra.  However, we are more interested in the homotopically better behaved functor $\shim$.
\end{rem}

We prove \myref{SymMonThm} in two steps.  We first prove that $\shim$ is lax monoidal
and then prove that it is symmetric.  We repeat that the symmetric variant of the bar construction used in
our construction of $\shim$ is vital for the second part.

\subsection{The proof that $\shim$ is lax monoidal}\label{monoidal}

Let $X$ and $Y$ be $\sF$-$G$-spaces.
We need maps of $G$-spectra $\epz \colon S_G \rtarr \shim(\mathbf{I})$ and
\[ \varphi \colon \shim(X)\sma \shim(Y) \rtarr \shim(X\sma Y), \] 
the latter natural in $X$ and $Y$, such that the following three
diagrams commute:
\[
\xymatrix{
S_G \sma \shim(X)\ar[r]^-{\epz \sma \id} \ar[d]_{\cong} & \shim(\mathbf{I}) \sma \shim(X)\ar[d]^{\varphi}\\
\shim(X)  &  \shim(\mathbf{I}\sma X),\ar[l]^{\cong}
}
\quad
\xymatrix{
\shim(X) \sma S_G \ar[d]_{\cong} \ar[r]^-{\id \sma \epz} & \shim(X) \sma \shim(\mathbf{I})\ar[d]^{\varphi}\\
\shim(X)  &  \shim(X\sma \mathbf{I}),\ar[l]^{\cong}
}
\]

\[
\xymatrix{
\shim(X)\sma \shim(Y) \sma \shim(Z) \ar[r]^-{\varphi \sma \id} \ar[d]_{\id \sma \varphi} &\shim(X\sma Y) \sma \shim(Z) \ar[d]^{\varphi}\\
\shim(X)\sma \shim(Y\sma Z) \ar[r]_{\varphi} &  \shim(X\sma Y \sma Z).
}
\]
In the last diagram we have omitted the associativity isomorphisms for both $\sF$-$G$-spaces and orthogonal $G$-spectra. 

Note that $S^V$ includes into the 0-simplices of the bar construction $\barc{(S^V)^\bullet}{\bF^{\SI}}{\mb{I}}$ by mapping into the component $S^V \cong S^V \sma \mb{1}$. It is straightforward to check that this map gives a map of spectra $\epz \colon S_G \rtarr \shim(\mb{I})$.

To construct $\varphi$, we define a map
\begin{equation}\label{varphi} B\big{(} (S^V)^{\bullet}, \bF^{\SI}, X \big{)} \sma B\big((S^W)^{\bullet},\bF^{\SI},Y\big)
\rtarr \big((S^{V\oplus W})^{\bullet}, \bF^{\SI}, X\sma Y\big)\end{equation}
for $X$, $Y$ $\sF$-$G$-spaces, and $V$, $W$ in $\sI_G$. We will define the map on wedge summands at the simplicial level, before modding out by the symmetric groups. As usual, we use lexicographic ordering to identify the smash product $\mb m\sma \mb n$ of finite based sets with 
$\bf{mn}$.  This is where asymmetry enters.  To simplify notation, for a tuple $(n_0,\dots,n_q)$, write 
\begin{equation}\label{Fmnotn}
\sF(\ul{\mb{n}}) = \sF(\mb{n_{q-1}},\mb{n_q})\sma \cdots \sma \sF(\mb{n_0},\mb{n_1}).
\end{equation}

Given a second tuple $(m_0,\dots,m_q)$, write 
\begin{equation}\label{Fmnotn2}
\sF(\ul{\mb{m}}) \sma \sF(\ul{\mb{n}}) = \sF(\mb{m_{q-1}},\mb{m_q})\sma \sF(\mb{n_{q-1}},\mb{n_q}) \sma \cdots \sma \sF(\mb{m_0},\mb{m_1})\sma \sF(\mb{n_0},\mb{n_1})
\end{equation}
and
\begin{equation}\label{Fmnotn3}
\sF( \ul{\mb{m}}\sma \ul{\mb{n}}) =  \sF(\mb{m_{q-1}n_{q-1}}, \mb{m_{q}n_{q}}) \sma \cdots \sma
 \sF(\mb{m_{0}n_{0}}, \mb{m_{1}n_{1}}).
\end{equation}

On $q$-simplices, the map in (\ref{varphi}) is defined, after passing to orbits, as the composite
\begin{equation}\label{phi_simplices}
\xymatrix{
(S^V)^{m_q} \sma \sF(\ul{\mb{m}}) \sma X(\mb{m_0}) \sma 
(S^W)^{n_q} \sma \sF(\ul{\mb{n}}) \sma Y(\mb{n_0}) \ar[d]\\
(S^V)^{m_q} \sma (S^W)^{n_q} \sma \sF(\ul{\mb{m}}) \sma \sF(\ul{\mb{n}}) \sma X(\mb{m_0}) \sma Y(\mb{n_0})\ar[d]\\
(S^{V\oplus W})^{m_q n_q} \sma \sF( \ul{\mb{m}}\sma \ul{\mb{n}}) \sma (X\sma Y)(\mb{m_0} \sma \mb{n_0}).
}
\end{equation}
The first map is a shuffle map. The second map is the smash product of the map 
\[(S^V)^{m_q}\sma (S^W)^{n_q} \rtarr (S^{V\oplus W})^{m_q n_q}\]
that sends $(v_1,\dots,v_m)\sma (w_1,\dots, w_n)$ to $(v_i\sma w_j)$ in the $(i,j)$th-coordinate, 
the map induced by $\sma$ on $\sF$, and the map
\[X(\mb{m_0})\sma Y(\mb{n_0}) \rtarr (X\sma Y)(\mb{m_0} \sma \mb{n_0})\]
induced by the universal property of Day convolution. 

One can easily check that these maps commute with the simplicial maps and are compatible with the symmetric group actions that are used in the passage to orbits in the definition of $B^\SI$, thus giving a pairing
of $G$-functors $\sI_G\times \sI_G\rtarr \sT_G$.  By the universal property of left Kan extension, there results a map
\[\shim(X)\sma_{\sI_G} \shim(Y) \rtarr \shim (X\sma Y). \]
of $\sI_G$-$G$-spaces.  It is routine to check that this map factors through the coequalizer in (\ref{coeq}), and this gives the required map
\[ \varphi\colon \shim(X)\sma \shim(Y) \rtarr \shim (X\sma Y) \] 
of $G$-spectra.
From here, it is straightforward to check that the three diagrams do in fact commute.  For the unit diagrams, the 
essential point is just that $S^0$ is the unit for $\sma$ in $\sT_G$. For the associativity diagram, the essential
point is just that $\sma$ on (nondegenerately based compactly generated) $G$-spaces is associative (up to canonical isomorphism).

\subsection{The proof that $\shim$ is lax symmetric monoidal}\label{symmon}

In the previous section we could have worked just as well with the machine defined in \cite{MMO} in terms of
the ordinary bar construction, but the resulting lax monoidal machine would not be lax symmetric monoidal.
In fact, the dichotomy is already present nonequivariantly, where Woolfson's variant \cite{Woolf} of the Segal 
machine, which is implicitly defined using the ordinary bar construction, is lax monoidal but not lax symmetric monoidal.

We must prove that the following diagram commutes.
\[  \xymatrix{
\bS_G(X)\sma \bS_G(Y) \ar[r]^-{\varphi} \ar[d]_{\ta} &  \bS_G(X\sma Y) \ar[d]^{\bS_G \ta} \\
\bS_G(Y)\sma \bS_G(X) \ar[r]_-{\varphi}  &  \bS_G(Y\sma X).\\} \]

In general, for orthogonal $G$-spectra $X$, $Y$, and $Z$ with maps $f\colon X\sma Y \rtarr Z$ and $e\colon Y\sma X \rtarr Z$, the 
interpretation in terms of the external smash product for the diagram
\[
\xymatrix{
X\sma Y \ar[dr]^f\ar[d]_{\tau}\\
Y\sma X \ar[r]_e & Z
}
\]
to commute is that the diagram
\[
\xymatrix{
X(V)\sma Y(W) \ar[r]^f  \ar[d]_{\tau} & Z(V\oplus W) \ar[d]^{Z(\tau)}\\
Y(W)\sma X(V) \ar[r]_e & Z(W\oplus V) 
}
\] 
commutes for all $V$ and $W$. Therefore, with the horizontal arrows defined as in the previous section, the diagram that we are 
trying to show commutes translates to the commutativity of the diagram
\[ \xymatrix{
B((S^V)^{\bullet}, \bF^{\SI}, X)\sma B((S^W)^{\bullet}, \bF^{\SI}, Y)  \ar[dd]_{\ta} \ar[r] & B((S^{V\oplus W})^{\bullet}, \bF^{\SI}, X\sma Y)
\ar[d]^{B(\id,\id,\ta)} \\
& B((S^{V\oplus W})^{\bullet}, \bF^{\SI}, Y\sma X) \ar[d]^{B(\tau^{\bullet}, \id, \id)}\\
B((S^W)^{\bullet}, \bF^{\SI}, Y)\sma B((S^V)^{\bullet}, \bF^{\SI}, X) \ar[r] & B((S^{W\oplus V})^{\bullet}, \bF^{\SI}, Y\sma X), \\} \]
which can be rewritten as
\[ \xymatrix{
B((S^V)^{\bullet}, \bF^{\SI}, X)\sma B((S^W)^{\bullet}, \bF^{\SI}, Y)  \ar[d]_{\ta} \ar[r] & B((S^{V\oplus W})^{\bullet}, \bF^{\SI}, X\sma Y)
\ar[d]^{B(\tau^\bullet,\id,\ta)} \\
B((S^W)^{\bullet}, \bF^{\SI}, Y)\sma B((S^V)^{\bullet}, \bF^{\SI}, X) \ar[r] & B((S^{W\oplus V})^{\bullet}, \bF^{\SI}, Y\sma X). \\} \]

That diagram does not commute with $\bF^{\SI}$ replaced by $\bF^{\bN}$ , but it does commute as written.
The quotienting by permutations that appears in the construction of $\bF^{\SI}$ rectifies the noncommutativity that was introduced by the choice of lexicographic ordering on smash products.
To see the intuition, observe that the diagram for the actual tensor 
product of functors over $\sF$ rather than the bar construction clearly commutes (as is needed to verify that the conceptual Segal machine is lax symmetric monoidal, 
as claimed in the introduction).  That diagram takes the form
\[ \xymatrix{
((S^V)^{\bullet}\otimes_{\sF}X)\sma ((S^W)^{\bullet}\otimes_{\sF}Y) \ar[r] \ar[d]_{\ta} 
& (S^{V\oplus W})^{\bullet}\otimes_{\sF}(X\sma Y) \ar[d]^{\ta\otimes\ta}\\
((S^W)^{\bullet}\otimes_{\sF}Y)\sma ((S^V)^{\bullet}\otimes_{\sF}X) \ar[r]
& (S^{W\oplus V})^{\bullet}\otimes_{\sF}(Y\sma X).\\}
\]

Expanding out the definitions of the maps $\varphi$ on $q$-simplices given in (\ref{phi_simplices}), we see that going clockwise we get
\[
\xymatrix{
(S^V)^{m_q} \sma \sF(\ul{\mb{m}}) \sma X(\mb{m_0}) \sma (S^W)^{n_q} \sma \sF(\ul{\mb{n}}) \sma Y(\mb{n_0})\ar[d]\\
(S^{W\oplus V})^{m_q n_q} \sma \sF(\ul{\mb{m}}\sma \ul{\mb{n}}) \sma (Y\sma X)(\mb{m_0\sma n_0}),}\]
while going counterclockwise we get 
\[
\xymatrix{
(S^V)^{m_q} \sma \sF(\ul{\mb{m}}) \sma X(\mb{m_0})) \sma (S^W)^{n_q} \sma \sF(\ul{\mb{n}}) \sma Y(\mb{n_0})\ar[d]\\
(S^{W\oplus V})^{n_q  m_q} \sma \sF(\ul{\mb{n}}\sma \ul{\mb{m}}) \sma (Y\sma X)(\mb{n_0\sma m_0}).} \]

When we pass to orbits over the groups $\SI_{m_i n_i}$ and consider the evident permutations $\tau_i \colon m_i \sma n_i \rtarr n_i \sma m_i$ in the three variables of our two-sided bar constructions, we see that the diagram commutes after passage to quotienting by the symmetric groups. To see what is happening on the first two variables, 
just observe that the following general diagram commutes for based $G$-spaces $A,B,C,D$
\[
\xymatrix{
\sT_G(A,B) \sma \sT_G(C,D) \ar[r]^-{\ta} \ar[d]_{\sma} &\sT_G(C,D) \sma \sT_G(A,B) \ar[d]^{\sma}\\
\sT_G(A\sma C, B\sma D) \ar[r]_{\sT_G(\ta,\ta)} & \sT_G(C\sma A, D\sma B).
}
\]
To see what is happening on the third variable, we note that the symmetry isomorphism $\tau \colon X\sma Y \to Y \sma X$ of the Day convolution on $\sF$-$G$-spaces, is defined as the morphism corresponding via (\ref{Day}) to the composite
\[X(\mb{m})\sma Y(\mb{n}) \xrightarrow{\tau} Y(\mb{n})\sma X(\mb{m}) \rightarrow (Y\sma X)(\mb{n}\sma\mb{m})\xrightarrow{(Y\sma X)(\tau)} (Y\sma X)(\mb{m}\sma \mb{n}),\] where the unlabeled arrow corresponds to the identity of $Y\sma X$ under (\ref{Day}). Thus the diagram
\[
\xymatrixcolsep{2cm}\xymatrix{
X(\mb{m})\sma Y(\mb{n}) \ar[rr]^-{\ta}  \ar[d] & & Y(\mb{n}) \sma X(\mb{m}) \ar[d] \\
(X\sma Y)(\mb{m \sma n}) \ar[r]_{\ta} & (Y \sma X)(\mb{m\sma n}) \ar[r]_-{(Y\sma X)(\ta)} & (Y\sma X)(\mb{n\sma m})} \]
commutes.

\section{Examples}\label{homotopical}
We give two elementary examples where the symmetric monoidal property of the Segal machine appears naturally. 
We first give a symmetric monoidal functor from based $G$-spaces to $\sF$-$G$-spaces that gives a multiplicative version
of the Barratt-Priddy-Quillen theorem on application of the Segal machine. We then give a lax symmetric monoidal functor from 
abelian $G$-groups to $\sF$-$G$-spaces that gives rise to genuine ring, module, and algebra Eilenberg-Mac\,Lane $G$-spectra 
on application of the machine.  A brief final section shows that the Segal machine preserves homotopies.

\subsection{Suspension $G$-spectra}\label{suspension}

Any equivariant infinite loop space machine should encode a version of the Barratt-Priddy-Quillen theorem
expressing suspension $G$-spectra as outputs of input to the machine.  As explained in \cite{GM3},
this is true of the operadic machine, where free $E_{\infty}$ $G$-spaces give rise to suspension
$G$-spectra.  In this section we explain the very different way that suspension $G$-spectra appear
in the Segal machine.   We give a monoidal equivalence between the suspension $G$-spectrum functor 
and the composite of the Segal machine with an elementary functor from $G$-spaces to $\sF$-$G$-spaces.  
For definiteness, we again take $\bS_G$ to mean $\bS_G^{\SI}$ in this section.

Recall that $\SI^{\infty}_G$ is the functor from based $G$-spaces $G\sT$  to orthogonal $G$-spectra $G\sS$
that sends $X$ to $\{\SI^V X\}$.  It is left adjoint to the $0$th $G$-space functor $(-)_0$, given by evaluation at $S^0$.  The following result is well-known, but 
is not well documented in the literature.

\begin{lem}  There is a natural isomorphism of $G$-spectra
\begin{equation}\label{finick1}
 \SI^{\infty}_G X \sma \SI^{\infty}_G Y \rtarr \SI^{\infty}_G (X\sma Y),
 \end{equation}
and the functor $\SI^{\infty}_G$ from based $G$-spaces to $G$-spectra is symmetric monoidal.
 \end{lem}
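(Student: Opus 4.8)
The plan is to realize $\SI^{\infty}_G$ as a composite of two manifestly strong symmetric monoidal functors. Recall that $\SI^{\infty}_G X$ is the free orthogonal $G$-spectrum on $X$ concentrated in degree $0$: as a functor it is $V\mapsto S^V\sma X = S(V)\sma X$, with $S$-action induced by the multiplication of the sphere. I would factor it as
\[ \SI^{\infty}_G\colon G\sT \xrtarr{\ F_0\ } \Fun(\sI_G,\sT_G) \xrtarr{\ S\sma_{\sI_G}(-)\ } G\sS, \]
where $F_0 X = \sI_G(0,-)\sma X$ is the free $\sI_G$-$G$-space concentrated at level $0$ (left adjoint to $\mathrm{Ev}_0$) and $S\sma_{\sI_G}(-)$ is the free $S$-module functor, left adjoint to the forgetful functor from orthogonal $G$-spectra to $\sI_G$-$G$-spaces. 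Both factors are strong symmetric monoidal: the assignment $(V,X)\mapsto \sI_G(V,-)\sma X$ carries $(\oplus,\sma)$ to the Day convolution $\sma_{\sI_G}$ because $\sI_G(V,-)\sma_{\sI_G}\sI_G(W,-)\iso \sI_G(V\oplus W,-)$ and $(A\sma K)\sma_{\sI_G}(B\sma L)\iso (A\sma_{\sI_G}B)\sma(K\sma L)$, and restriction to the $\oplus$-unit $V=0$ makes $F_0$ strong symmetric monoidal, sending $S^0$ to the Day unit $\sI_G(0,-)=S$; and extension of scalars $S\sma_{\sI_G}(-)$ along the commutative monoid $S$ is strong symmetric monoidal by the standard argument. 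Since $(S\sma_{\sI_G}F_0X)(V)\iso (S\sma_{\sI_G}\sI_G(0,-))(V)\sma X\iso S^V\sma X$ and the $S$-module structures agree, the composite is $\SI^{\infty}_G$, with $\SI^{\infty}_G S^0\iso S_G$; being a composite of strong symmetric monoidal functors, $\SI^{\infty}_G$ is strong symmetric monoidal, and \eqref{finick1} is the resulting natural isomorphism.

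To make \eqref{finick1} explicit, I would also construct the pairing $\varphi$ directly, exactly as in \S\ref{monoidal}: the isomorphisms
\[ (\SI^{\infty}_G X)(V)\sma(\SI^{\infty}_G Y)(W) = (S^V\sma X)\sma(S^W\sma Y)\xrtarr{\ \iso\ } S^{V\oplus W}\sma(X\sma Y) = \big(\SI^{\infty}_G(X\sma Y)\big)(V\oplus W), \]
built from the shuffle $S^V\sma S^W\iso S^{V\oplus W}$ and the coherence isomorphisms of $\sma$ on $G\sT$, assemble into a map $\SI^{\infty}_G X\barwedge \SI^{\infty}_G Y\rtarr \big(\SI^{\infty}_G(X\sma Y)\big)\com\oplus$ of $\sI_G\times \sI_G$-$G$-spaces; by the universal property \eqref{Day2} this descends to a map of $\sI_G$-$G$-spaces, and since the $S$-actions involved are unit isomorphisms it coequalizes the pair in \eqref{coeq}, yielding $\varphi\colon \SI^{\infty}_G X\sma \SI^{\infty}_G Y\rtarr \SI^{\infty}_G(X\sma Y)$. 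That $\varphi$ is an isomorphism follows from the same two facts about Day convolution used above, and the unit map $S_G\rtarr \SI^{\infty}_G S^0$ is the identification $S(V)\sma S^0 = S^V$.

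Finally, the coherence conditions — the associativity pentagon, the unit triangles, and the symmetry hexagon — are inherited from those for $(\sma,S^0)$ on $G\sT$ together with the associativity and symmetry of $\oplus$ on $\sI_G$ and of the Day convolution; symmetry in particular comes down to the fact that the twist of orthogonal $G$-spectra corresponds under these identifications to the twist of $G\sT$ combined with $S^V\sma S^W\iso S^{V\oplus W}\iso S^{W\oplus V}$, which is precisely $\SI^{\infty}_G(\ta)$. The work here is entirely bookkeeping; the one point requiring real care is the point-set identification of $\SI^{\infty}_G$ with the composite above, matching the structure maps $\si$ on the two sides, together with the two strong-monoidality facts for Day convolution of representables — all of which are classical (compare \cite[II.3--4]{MM} equivariantly), which is why I would only sketch the argument. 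As in \S\ref{symmon}, the shuffle isomorphisms $S^V\sma S^W\iso S^{V\oplus W}$ must be tracked carefully throughout.
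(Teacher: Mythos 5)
Your first-paragraph argument is correct, and it is a genuinely different route from the paper's. The paper's proof is entirely hands-on: it builds the isomorphism $\varphi$ of (\ref{finick1}) by the universal property of the left Kan extension followed by passage to the coequalizer (\ref{coeq}), builds a candidate inverse (\ref{finick3}) by observing that the $0$th $G$-space of $\SI^{\infty}_GX\sma\SI^{\infty}_GY$ is homeomorphic to $X\sma Y$ and taking the adjoint under the $(\SI^\infty_G,(-)_0)$ adjunction, and then asserts that the two maps are inverse (with a fallback appeal to the universal property of the coequalizer); the symmetric-monoidal coherence is dismissed as ``clear from the construction.'' Your approach instead factors $\SI^{\infty}_G$ as $S\sma_{\sI_G}(-)\com F_0$ and invokes two general principles --- that $F_0X=\sI_G(0,-)\sma X$ is strong symmetric monoidal for Day convolution because $\sI_G(V,-)\sma_{\sI_G}\sI_G(W,-)\iso\sI_G(V\oplus W,-)$ and the tensoring over $G\sT$ distributes over $\sma_{\sI_G}$, and that extension of scalars along the commutative monoid $S$ is strong symmetric monoidal. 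This buys the coherence conditions (associativity, unit, and symmetry) for free, since a composite of strong symmetric monoidal functors is strong symmetric monoidal, whereas the paper's sketch leaves them implicit; the price is the point-set identification of the composite with $\SI^\infty_G$ and its structure maps, which you correctly flag as the delicate step. Your second paragraph, building $\varphi$ explicitly via the external smash product, (\ref{Day2}), and (\ref{coeq}), is essentially a reproduction of the paper's construction of (\ref{finick1}), so it is redundant given your first paragraph but serves as a useful cross-check and makes the unit map $S_G\to\SI^\infty_GS^0$ explicit. One small precision worth noting: ``extension of scalars along the commutative monoid $S$'' should really read ``extension of scalars along the unit map $\sI_G(0,-)\to S$''; this does not affect the argument.
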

\begin{proof}[Sketch] For $G$-spaces $X$ and $Y$, the obvious isomorphisms 
\[ \SI^V X \sma \SI^W Y \iso \SI^{V\oplus W}(X\sma Y) \]
specify a natural isomorphism
\[ \SI^{\infty}_G X\barwedge  \SI^{\infty}_G Y \iso  \SI^{\infty}_G (X\sma Y) \com \oplus \]
of $G$-functors $\sI_G\times \sI_G \rtarr \sT_G$. By the universal property of left Kan extension, there results a natural map 
of $G$-functors $\sI_G\rtarr \sT_G$
\begin{equation}\label{finick2}
\SI^{\infty}_G X \sma_{\sI_G}  \SI^{\infty}_G Y \rtarr \SI^{\infty}_G (X\sma Y). 
\end{equation}
The smash product $\SI^{\infty}_G X \sma  \SI^{\infty}_G Y$ is obtained by coequalizing the actions
of the sphere $G$-spectrum on $\SI^{\infty}_G X$ and $\SI^{\infty}_G Y$, and the map (\ref{finick2})
factors through the coequalizer to give the map (\ref{finick1}).  By a check of definitions, the $0$th 
$G$-space of $\SI^{\infty}_G X \sma  \SI^{\infty}_G Y$ is homeomorphic to $X\sma Y$, 
and the adjoint of this homeomorphism is a natural map
\begin{equation}\label{finick3}
 \SI^{\infty}_G (X\sma Y) \rtarr \SI^{\infty}_G X\sma \SI^{\infty}_G Y.
\end{equation}
The maps (\ref{finick1}) and (\ref{finick3}) are inverse isomorphisms of $G$-spectra.
Alternatively, one can check directly that $\SI^\infty_G (X\sma Y)$ satisfies the universal property of the coequalizer
that defines $ \SI^{\infty}_G X\sma \SI^{\infty}_G Y$.  The last statement is clear from the construction of the 
isomorphism.
\end{proof}

Slightly generalizing a notation used before, write $Y^{\bullet}$ for the contravariant functor  $\sF\rtarr G\sT$ given by the powers 
$Y^n =\sT_G({\bf n},Y)$ of any based $G$-space $Y$.  Analogously, 
write $^{\bullet}\! X$ for the covariant functor $\sF\rtarr G\sT$ given by the $n$-fold wedges $^{n}\! X = {\bf{n}}\sma X$  
of a based $G$-space $X$.  Since  $^n\!S^0 = \mathbf{n}\iso \sF(\mathbf{1},\mathbf{n})$, $^\bullet\!S^0 \iso \sF(\mathbf{1},-)$ 
is the unit $\sF$-$G$-space previously denoted by $\bf I$.  Note that $^\bullet\! X$ is very far from being 
\gen-special, or even naively special, and bears no obvious relationship to free $E_{\infty}$ $G$-spaces.  We shall prove that application 
of the functor $\bS_G$ to these $\sF$-$G$-spaces gives a functor from $G$-spaces to $G$-spectra that is 
monoidally equivalent to  $\SI^{\infty}_G$. 

\begin{rem} With the notations of \cite[Definition 1.3]{MMSS}, we have
\[ ^{n}\! X  = \mb{n}\sma X = \sF(\mathbf 1,\mathbf{n})\sma X = (F_1X)(\mathbf n), \]
and the functor $F_1 =  ^\bullet\! \! (-)$ from $G$-spaces to $\sF$-$G$-spaces is left adjoint to the
functor $\mathbf{Ev}_1$ specified by evaluation at $\mathbf 1$.  
\end{rem}

\begin{lem} The functor $F_1 =^{\bullet}\!\!(-)$ is strong symmetric monoidal.
\end{lem}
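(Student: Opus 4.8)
The plan is to verify directly that $F_1$ is strong symmetric monoidal by exhibiting the required natural isomorphism and unit map and checking the coherence diagrams. The unit map $\mathbf{I}\rtarr F_1(S^0)$ is an isomorphism since we have already observed that $^\bullet S^0 \iso \mathbf{I}$. For the binary structure map, I would produce a natural isomorphism
\[ F_1(X)\sma F_1(Y) \rtarr F_1(X\sma Y) \]
of $\sF$-$G$-spaces, where the smash on the left is Day convolution. The most efficient route uses the adjunction defining $F_1$ as left adjoint to $\mathbf{Ev}_1$: left adjoints are determined up to canonical isomorphism, and one shows that $F_1(X)\sma F_1(Y)$ corepresents the same functor as $F_1(X\sma Y)$. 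Concretely, for an $\sF$-$G$-space $Z$,
\[ {\Fun}(\sF,G\sT)\big(F_1(X)\sma F_1(Y),\,Z\big) \iso {\Fun}(\sF\times\sF,G\sT)\big(F_1(X)\barwedge F_1(Y),\, Z\circ\sma\big), \]
and then a double application of the $(F_1,\mathbf{Ev}_1)$ adjunction in the two variables identifies this with $G\sT\big(X\sma Y, Z(\mathbf 1)\big) \iso {\Fun}(\sF,G\sT)\big(F_1(X\sma Y), Z\big)$. By Yoneda this produces the desired natural isomorphism.

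Alternatively, and perhaps more transparently, I would give the isomorphism levelwise. Since $F_1(X)(\mathbf n) = \mathbf n \sma X = \sF(\mathbf 1,\mathbf n)\sma X$, the Day convolution formula gives $(F_1(X)\sma F_1(Y))(\mathbf n)$ as a coend over $\mathbf j,\mathbf k$ of $\sF(\mathbf j\sma\mathbf k,\mathbf n)\sma(\mathbf j\sma X)\sma(\mathbf k\sma Y)$, which by the co-Yoneda lemma applied in both variables collapses to $\sF(\mathbf 1\sma\mathbf 1,\mathbf n)\sma X\sma Y = \sF(\mathbf 1,\mathbf n)\sma(X\sma Y) = F_1(X\sma Y)(\mathbf n)$. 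One checks this is natural in $\mathbf n$, in $X$, and in $Y$. The compatibility with the symmetry isomorphisms is exactly the point where $\textbf{strong symmetric}$ monoidality (as opposed to merely strong monoidal) needs checking: the symmetry on Day convolution on $\sF$-$G$-spaces is built from the symmetry on $\sma$ in $G\sT$ together with the symmetry $\mathbf j\sma\mathbf k \iso \mathbf k\sma\mathbf j$ in $\sF$, and under the identification above these two pieces match precisely the symmetry $X\sma Y\iso Y\sma X$ in $G\sT$, since $\mathbf 1\sma\mathbf 1$ is symmetric under the swap. The unit, associativity, and hexagon coherence diagrams then reduce to the corresponding coherences for $\sma$ on $G\sT$ and for the permutative structure on $\sF$.

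The main obstacle is bookkeeping rather than conceptual: tracking the lexicographic-ordering conventions used to identify $\mathbf m\sma\mathbf n$ with $\mathbf{mn}$ (the same asymmetry flagged in \S\ref{symmon}) and making sure the chosen isomorphism genuinely respects the symmetry constraints rather than only the associativity ones. Concretely, one must confirm that the swap map $\mathbf 1\sma\mathbf 1\rtarr\mathbf 1\sma\mathbf 1$ is the identity, so that no spurious permutation obstructs the hexagon; this is what makes $F_1$ strong symmetric monoidal whereas the analogous statement fails for $F_n$ with $n\geq 2$. Once the levelwise isomorphism and its naturality are in hand, the coherence diagrams commute because each is the image under a colimit-preserving construction of a coherence diagram that already holds in $(G\sT,\sma)$.
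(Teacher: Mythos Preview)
Your argument is correct. The paper's proof, by contrast, consists of a single sentence citing \cite[Lemma~1.8]{MMSS}, which in the diagram-spectrum formalism gives the isomorphism $F_dX \sma F_eY \iso F_{d\sma e}(X\sma Y)$; specializing to $d=e=\mathbf{1}$ yields the result since $\mathbf{1}\sma\mathbf{1}=\mathbf{1}$. So the paper outsources the work, while you unpack it. Your adjunction/Yoneda argument and the levelwise co-Yoneda computation are exactly the standard proofs of that cited lemma, and your observation that the swap on $\mathbf{1}\sma\mathbf{1}$ is the identity is precisely what makes the symmetric case go through (and what fails for $F_n$ with $n\geq 2$). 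Your approach buys self-containment and makes visible why the symmetry holds; the paper's citation buys brevity.
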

\begin{proof} This is implied by \cite[Lemma~1.8]{MMSS}, which specializes to give the required natural isomorphism
$F_1 X \sma F_1 Y \iso F_1(X\sma Y)$.
\end{proof}

\begin{thm}\mylabel{spacecomp} For based $G$-spaces $X$, there is a natural weak equivalence
$$\mu\colon \SI^{\infty}_G X \rtarr \bS_G(^{\bullet}\! X)$$ 
of $G$-spectra, and $\mu$ is a monoidal natural transformation.
\end{thm}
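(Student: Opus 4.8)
The plan is to construct the natural transformation $\mu$ by comparing both sides with the zeroth $G$-space, exploit the adjunctions $(\SI^\infty_G, (-)_0)$ and $(F_1, \mathbf{Ev}_1)$, and then promote a space-level equivalence to a spectrum-level one using the fact that both target and source are (positive) $\OM$-$G$-spectra. First I would identify $\mathbf{Ev}_1(\prescript{\bullet}{}{X}) = \prescript{1}{}{X} = \mathbf{1}\sma X = S^0 \sma X \cong X$, so that $\prescript{\bullet}{}{X}$ is \gen-special in the strongest possible sense: in fact the Segal maps $\de\colon \prescript{n}{}{X} \rtarr (\prescript{1}{}{X})^n = X^n$ are the evident inclusions of the wedge into the product, and $(\prescript{\bullet}{}{X})_1 \cong X$. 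Consequently \myref{Segma} applies: $\bS_G(\prescript{\bullet}{}{X})$ is a positive $\OM$-$G$-spectrum and there is a natural group completion $X \cong (\prescript{\bullet}{}{X})_1 \rtarr \OM^V(\bS_G(\prescript{\bullet}{}{X}))(S^V)$ when $V^G \neq 0$. Since the input $G$-space $X$ appears here \emph{before} any group completion is forced, and since $X$ need not be grouplike, the key point is that $\prescript{\bullet}{}{X}$ is actually the ``free'' such input, and the group completion is an equivalence precisely because $\SI^\infty_G$ is already a genuine $\OM$-$G$-spectrum after the zeroth space.

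\textbf{Construction of $\mu$.} The cleanest construction uses the universal property. There is a canonical map of $\sF$-$G$-spaces from $\prescript{\bullet}{}{X}$ into (the underlying $\sF$-$G$-space of) any $\sF$-$G$-space $W$ with a chosen map $X \rtarr W_1$; indeed $F_1 = \prescript{\bullet}{}{(-)}$ is left adjoint to $\mathbf{Ev}_1$. Applying $\bS_G$ and using the inclusion $S^V$ into the $0$-simplices $S^V\sma\mathbf{1}$ of $B((S^V)^\bullet, \bF^{\SI}, \prescript{\bullet}{}{X})$ smashed against $X$, one obtains structure maps $\SI^V X \rtarr (\bS_G \prescript{\bullet}{}{X})(V)$ that are compatible with the suspension structure maps on the left and the spectrum structure maps on the right; these assemble into the map of $G$-spectra
\[ \mu\colon \SI^\infty_G X \rtarr \bS_G(\prescript{\bullet}{}{X}), \]
using the adjunction $(\SI^\infty_G, (-)_0)$ applied to $X \cong (\prescript{\bullet}{}{X})_1 \rtarr (\bS_G \prescript{\bullet}{}{X})_0$ (evaluation at $S^0$, where the bar construction collapses appropriately). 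Naturality in $X$ is immediate from naturality of every ingredient. That $\mu$ is monoidal follows by combining the two monoidal results already in hand: $F_1$ is strong symmetric monoidal, $\bS_G$ is lax symmetric monoidal by \myref{SymMonThm}, and $\SI^\infty_G$ is symmetric monoidal by the preceding lemma; one checks that the canonical map $\mu$ is compatible with the resulting lax structures, which is a diagram-chase using that $\mu$ is defined via the same $0$-simplex inclusions used to build $\varphi$ in \S\ref{monoidal}.

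\textbf{Proof that $\mu$ is a weak equivalence.} This is the main obstacle and I would handle it by fixed points and a comparison on homotopy groups. On the one hand, $(\SI^\infty_G X)^H$ has homotopy groups $\pi^H_*(\SI^\infty_G X) = \pi^{st}_*((X)^{?})$ — more precisely the $RO(G)$-graded or Mackey-functor stable homotopy groups of $X$, computed as a colimit $\colim_V \pi_{*+V}((S^V \sma X)^H)$. On the other hand, by \myref{Segma}(ii) the group completion $(\prescript{\bullet}{}{X})_1 \rtarr \OM^V(\bS_G \prescript{\bullet}{}{X})(S^V)$ shows that $\pi^H_n(\bS_G \prescript{\bullet}{}{X}) = \colim_V \pi^H_{n+V}((\bS_G\prescript{\bullet}{}{X})(S^V))$ agrees, after stabilization, with the group completion of the homotopy monoid $\pi^H_0(X^{\mathbf{n}}\text{-diagram})$; but because we are taking the stable (colimit over $V$ with $V^G\neq 0$) value, and because suspension is already a group completion at the level of $\pi_0$ (the James/Barratt–Priddy–Quillen phenomenon: $\colim_n \pi_0(\Omega^n\Sigma^n Z) $ is the group completion of $\coprod_k Z^k/\Sigma_k$ — the non-equivariant prototype), the map $\mu$ induces an isomorphism on all $\pi^H_*$. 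The technical core is thus: (1) identify $B((S^V)^\bullet, \bF^{\SI}, \prescript{\bullet}{}{X})$ up to $G$-equivalence with a model for $\SI^V X$ after one stabilization, or more robustly (2) show $\mu$ is a map of positive $\OM$-$G$-spectra inducing an equivalence on zeroth spaces after one loop, i.e.\ $\Omega^V \mu(S^V)$ is a $G$-equivalence for $V^G \neq 0$, which by the $\OM$-spectrum property forces $\mu$ to be a level (hence stable) equivalence in positive degrees and therefore a weak equivalence of $G$-spectra. I expect the cleanest route is (2): reduce to checking that on $0$-spaces-after-looping both sides compute the same group completion of $X$, invoke that $\Omega^V \Sigma^V X \rtarr \Omega^{V'}\Sigma^{V'}X$ stabilizes to $(\SI^\infty_G X)_0$, and cite the classical non-equivariant Segal/Barratt–Priddy–Quillen identification fixed-point-wise, using that $(\prescript{\bullet}{}{X})^H$ as an $\sF$-space is $\prescript{\bullet}{}{(X^H)}$-like in the relevant sense after the \gen-fixed-point analysis of \myref{equivalences}(i).
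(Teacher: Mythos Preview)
Your construction of $\mu$ via the $(\SI^\infty_G,(-)_0)$ adjunction is essentially the paper's, and your strategy for the monoidal claim is close to what the paper does (it formalizes your diagram chase as a small lemma on monoidal adjunctions).  The real problem is in your argument that $\mu$ is a weak equivalence.

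Your first move is to assert that $\prescript{\bullet}{}{X}$ is \gen-special ``in the strongest possible sense'' because the Segal maps $\de\colon \prescript{n}{}{X}\rtarr X^n$ are the inclusions of the wedge into the product.  This is false: those inclusions are almost never equivalences, and the paper says explicitly that $\prescript{\bullet}{}{X}$ is ``very far from being \gen-special, or even naively special.''  Consequently \myref{Segma} does not apply, $\bS_G(\prescript{\bullet}{}{X})$ is not known to be a positive $\OM$-$G$-spectrum by that route, and the group-completion comparison you sketch in part (2) has no foundation.  The fixed-point Barratt--Priddy--Quillen heuristic you invoke would, at best, identify $\OM^V\bS_G(\prescript{\bullet}{}{X})(S^V)$ with a free $E_\infty$-type object, not with $X$ itself; but without specialness you cannot even get that far.

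The paper's argument bypasses all of this with a one-line observation you are missing: since $\prescript{\bullet}{}{X}=\sF(\mathbf{1},-)\sma X$ is representable smashed with $X$, the identity of $\mathbf{1}$ gives an extra degeneracy in the simplicial object $B_\sbt(Y^\bullet,\bF^\SI,\prescript{\bullet}{}{X})$ for \emph{any} $Y$.  Hence the bar construction is $G$-homotopy equivalent to the tensor product $Y^\bullet\otimes_\sF\prescript{\bullet}{}{X}\cong Y\sma X$.  Taking $Y=S^V$ shows directly that $\mu$ is a \emph{levelwise} $G$-homotopy equivalence, with no appeal to specialness, $\OM$-spectrum structure, or group completion.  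The moral is that the ``free'' nature of $\prescript{\bullet}{}{X}=F_1X$ should be exploited via the Yoneda/extra-degeneracy trick, not via the Segal-machine output theorems, which are calibrated for special inputs.
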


To prove the theorem conceptually, we put it in a more general context, starting with
the following two observations.

\begin{lem}\mylabel{coequal}   The $G$-space $Y^{\bullet}\otimes_{\sF} {^{\bullet}\!X}$ is 
$G$-homeomorphic to $Y\sma X$.  
\end{lem}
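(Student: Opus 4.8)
The plan is to unwind both sides of the asserted homeomorphism directly from the definitions of the two functors and of the categorical tensor product over $\sF$. Recall that $Y^\bullet$ is the contravariant $\sF$-$G$-space $\mathbf{n}\mapsto Y^n = \sT_G(\mathbf{n},Y)$ and that $^\bullet\!X$ is the covariant $\sF$-$G$-space $\mathbf{n}\mapsto{}^n\!X = \mathbf{n}\sma X$. By definition (see \S\ref{formal}), the tensor product $Y^\bullet\otimes_\sF{}^\bullet\!X$ is the coequalizer in $G\sT$ of the two maps
\[ \coprod_{\mathbf{m},\mathbf{n}} \sT_G(\mathbf{n},Y)\sma \sF(\mathbf{m},\mathbf{n})\sma(\mathbf{m}\sma X) \rightrightarrows \coprod_{\mathbf{n}} \sT_G(\mathbf{n},Y)\sma(\mathbf{n}\sma X) \]
induced by the contravariant action of $\sF$ on $Y^\bullet$ and the covariant action of $\sF$ on $^\bullet\!X$.

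First I would construct the map $\sT_G(\mathbf{n},Y)\sma(\mathbf{n}\sma X)\rtarr Y\sma X$ that sends $f\sma(i\sma x)$ to $f(i)\sma x$ for $i\in\mathbf{n}$; since $f$ is based and $0\in\mathbf{n}$ is the basepoint, $f(0)=\ast$ so this is well defined on the smash product, and it is visibly a $G$-map. I would check these maps are compatible with the two parallel arrows above: given $\ph\colon\mathbf{m}\rtarr\mathbf{n}$, precomposing $f$ with $\ph$ and then evaluating at $j\in\mathbf{m}$ gives $f(\ph(j))\sma x$, while applying $\ph_*$ to $j\sma x$ first and then $f$ gives the same. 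Hence there is an induced $G$-map $\lambda\colon Y^\bullet\otimes_\sF{}^\bullet\!X\rtarr Y\sma X$.

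For the inverse, I would use the summand $\mathbf{n}=\mathbf{1}$: the $G$-map $Y\sma X = \sT_G(\mathbf{1},Y)\sma(\mathbf{1}\sma X)\rtarr\coprod_{\mathbf{n}}\sT_G(\mathbf{n},Y)\sma(\mathbf{n}\sma X)$ followed by the coequalizer projection gives $\kappa\colon Y\sma X\rtarr Y^\bullet\otimes_\sF{}^\bullet\!X$. Then $\lambda\circ\kappa=\id$ is immediate. For $\kappa\circ\lambda=\id$, the point is that every element of the coequalizer is represented in the $\mathbf{1}$-summand: given $f\sma(i\sma x)$ in the $\mathbf{n}$-summand, apply the coequalizer relation along the map $\delta_i\colon\mathbf{n}\rtarr\mathbf{1}$ sending $i$ to $1$ and all other elements to $0$ — wait, more carefully, one uses the map $\mathbf{1}\rtarr\mathbf{n}$ hitting $i$, together with the fact that $f\circ(\mathbf{1}\rtarr\mathbf{n})$ evaluated at $1$ is $f(i)$, to identify the class of $f\sma(i\sma x)$ with the class of $f(i)\sma(1\sma x)$ in the $\mathbf{1}$-summand. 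This shows $\kappa$ is surjective, and combined with $\lambda\circ\kappa=\id$ it follows that $\lambda$ and $\kappa$ are inverse $G$-homeomorphisms (continuity of the coequalizer map is automatic, and $\lambda$ is continuous as an induced map out of a colimit).

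The main obstacle, such as it is, is purely bookkeeping: verifying that the representative-in-the-$\mathbf{1}$-summand argument genuinely uses only the coequalizer relations generated by maps of $\sF$ (in particular the injections $\mathbf{1}\rtarr\mathbf{n}$), and checking the based/degenerate-basepoint compatibilities so that all maps descend to the relevant smash products. There is no real homotopy-theoretic content here — it is a direct identification of a coend with $Y\sma X$, analogous to the classical statement that $\sT_G(-,Y)\otimes_\sF F_1X\iso Y\sma X$ — so the proof is short once the maps are written down.
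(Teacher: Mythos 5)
Your proof is correct and takes essentially the same route as the paper's: both use the inclusions $\iota_j\colon \mathbf{1}\rtarr\mathbf{n}$ to collapse the coequalizer onto the $\mathbf{1}$-summand $Y\sma X$ (the paper phrases this via the decomposition $Y^n\sma{}^n\!X\iso\bigvee_{1\le j\le n}(Y^n\sma X)$, you via an explicit inverse pair $\lambda,\kappa$). Your self-correction from $\delta_i$ to $\iota_i$ is the right call, since the relation generated by $\delta_i$ only identifies elements of a proper subspace of the $\mathbf{n}$-summand.
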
 
\begin{proof}  Recall that the tensor product of functors is given by the evident coequalizer and is a quotient of the wedge over $n$ of the $G$-spaces
$$Y^n \sma {^n\! }X\iso \bigvee_{1\leq j \leq n} (Y^n\sma X).$$  Using the inclusions $\io_j\colon \mathbf 1 \rtarr \mathbf n$, 
$1\leq j\leq n$, and noting that $\io_j^*\colon Y^n \rtarr Y$ is projection on the $j$th coordinate while
${\io_j}_*\colon X\rtarr ^{n\!}X$ is inclusion of the $j$th wedge summand, we see that each wedge summand is 
identified with $Y\sma X$ on passage to the coequalizer.
\end{proof}

\begin{lem}\mylabel{catstan}
Let $\sD$ and $\sE$ be monoidal categories and let $\bL\colon \sD\rightleftarrows\sE\colon \bR$ be a monoidal adjunction, 
meaning that $\bL$ is strong monoidal and (consequently) $\bR$ is lax monoidal. Suppose that $\bH\colon \sD \rtarr \sE$ is a 
lax monoidal functor. Then a transformation
\[ \mu: \bL \Longrightarrow \bH\] 
is monoidal if and only if the adjoint transformation
\[ \hat\mu\colon  \mathrm{Id} \Longrightarrow \bR\bH\]
is monoidal.
\end{lem}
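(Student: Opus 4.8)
The plan is to treat this as a standard exercise in doctrinal adjunctions. Write $\otimes$ for the monoidal products of $\sD$ and $\sE$ and $I_{\sD}$, $I_{\sE}$ for their units, and denote the coherence data by $\lambda_{D,D'}\colon \bL D\otimes\bL D'\xrtarr{\iso}\bL(D\otimes D')$, $\lambda_0\colon I_{\sE}\xrtarr{\iso}\bL I_{\sD}$ for the strong monoidal functor $\bL$, by $\rho_{E,E'}\colon \bR E\otimes\bR E'\rtarr \bR(E\otimes E')$, $\rho_0\colon I_{\sD}\rtarr\bR I_{\sE}$ for $\bR$, and by $\chi_{D,D'}$, $\chi_0$ for $\bH$. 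The working form of the hypothesis that the adjunction is monoidal is that $\bR$ carries the mate lax structure, equivalently that the unit $\eta\colon\mathrm{Id}\Rightarrow\bR\bL$ and counit $\varepsilon\colon\bL\bR\Rightarrow\mathrm{Id}$ are themselves monoidal natural transformations; I would simply quote this (it is the content of Kelly's doctrinal adjunction). Then $\bR\bH$ gets the composite lax structure $\bR(\chi_{D,D'})\circ\rho_{\bH D,\bH D'}$ and $\bR(\chi_0)\circ\rho_0$, $\mathrm{Id}$ is strict, $\hat\mu$ is given by $\hat\mu_D=\bR(\mu_D)\circ\eta_D$, and the inverse of the adjunction bijection sends $\nu$ to $\varepsilon_{\bH(-)}\circ\bL(\nu)$.

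Since $\mu\mapsto\hat\mu$ is a bijection on underlying natural transformations, it suffices to prove the two implications, and they are formally dual. For ``$\mu$ monoidal $\Rightarrow$ $\hat\mu$ monoidal'' I would expand both sides of the multiplicativity axiom for $\hat\mu$ (using functoriality of $\otimes$) and paste together three elementary squares: naturality of $\rho$ against $\mu_D\otimes\mu_{D'}$, the image under $\bR$ of the multiplicativity square for $\mu$ (this is where $\lambda$ enters), and the multiplicativity axiom for $\eta$. Concretely, the left-hand side rewrites as
\[
\bR(\chi_{D,D'})\circ\rho_{\bH D,\bH D'}\circ(\bR\mu_D\otimes\bR\mu_{D'})\circ(\eta_D\otimes\eta_{D'})
= \bR(\mu_{D\otimes D'})\circ\bR(\lambda_{D,D'})\circ\rho_{\bL D,\bL D'}\circ(\eta_D\otimes\eta_{D'}),
\]
and monoidality of $\eta$ collapses the last three factors to $\eta_{D\otimes D'}$, leaving exactly $\hat\mu_{D\otimes D'}$. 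The unit axiom is the same kind of chase: expand $\hat\mu_{I_{\sD}}=\bR(\mu_{I_{\sD}})\circ\eta_{I_{\sD}}$, use the unit axiom for $\mu$ as $\mu_{I_{\sD}}=\chi_0\circ\lambda_0^{-1}$, then the unit axiom for $\eta$ as $\bR(\lambda_0^{-1})\circ\eta_{I_{\sD}}=\rho_0$, arriving at $\bR(\chi_0)\circ\rho_0$.

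For the converse I would run the mirror-image argument on the description $\mu_D=\varepsilon_{\bH D}\circ\bL(\hat\mu_D)$, now pasting functoriality of $\bL$, naturality of $\lambda$, the multiplicativity and unit axioms for $\hat\mu$, and monoidality of $\varepsilon$; strongness of $\bL$ (invertibility of $\lambda$, $\lambda_0$) is used precisely where the previous argument used it implicitly, to pass between a coherence square and its inverted form. Combining the two implications with the bijection $\mu\leftrightarrow\hat\mu$ yields the equivalence.

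I do not expect a genuine obstacle here: every step is bookkeeping with coherence $2$-cells and naturality. The one point that must be handled honestly rather than glossed is the use of the \emph{monoidal} adjunction hypothesis in the form ``$\eta$ and $\varepsilon$ are monoidal'' — without it the equivalence is false — so the write-up should make that invocation explicit (or else cite the doctrinal adjunction package wholesale and apply it directly).
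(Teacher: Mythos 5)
The paper states this lemma without proof, treating it as categorical folklore, so there is no textual argument to compare yours against; your write-up is a correct and complete one. The key point you correctly isolate is that in a monoidal adjunction the unit $\eta\colon\mathrm{Id}\Rightarrow\bR\bL$ and counit $\varepsilon\colon\bL\bR\Rightarrow\mathrm{Id}$ are themselves monoidal natural transformations (Kelly's doctrinal adjunction), and after quoting that, your two pasting arguments — naturality of $\rho$ (resp.\ $\lambda$), the image under $\bR$ (resp.\ $\bL$) of the multiplicativity/unit squares for the given transformation, and monoidality of $\eta$ (resp.\ $\varepsilon$) — chase through exactly as claimed, with strongness of $\bL$ used to pass $\lambda_0^{-1}$ through as you note. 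A marginally slicker packaging you might appreciate: doctrinal adjunction promotes $\bL\dashv\bR$ to an adjunction internal to the 2-category of monoidal categories, lax monoidal functors, and monoidal transformations, and the lemma is then the hom-isomorphism $\bL F\Rightarrow G\ \leftrightarrow\ F\Rightarrow\bR G$ (with $F=\mathrm{Id}$, $G=\bH$) computed in that 2-category, together with the observation that the forgetful 2-functor to $\mathbf{Cat}$ preserves the adjunction so the bijections agree; your explicit computation is precisely what makes that abstract statement true.
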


\begin{thm}\mylabel{coequalconj}  The $G$-space $B(Y^{\bullet},\bF^{\SI},^\bullet\!X)$ is naturally $G$-equivalent to $Y\sma X$.
\end{thm}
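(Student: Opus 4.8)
The plan is to deduce \myref{coequalconj} from \myref{coequal} by an ``extra degeneracy'' argument, exactly as was used for $B(\bE,\bE,X)$ in \S\ref{formal}. Recall from \eqref{BAotimes} that $Y^{\bullet}\otimes_{\bF^{\SI}} B(\bF^{\SI},\bF^{\SI},{^{\bullet}}\!X) \iso B(Y^{\bullet},\bF^{\SI},{^{\bullet}}\!X)$ over $Y^{\bullet}\otimes_{\bF^{\SI}}{^{\bullet}}\!X$, and that $\epz\colon B(\bF^{\SI},\bF^{\SI},{^{\bullet}}\!X)\rtarr {^{\bullet}}\!X$ is a homotopy equivalence of $\bF^{\SI}$-algebras (equivalently $\sF$-$G$-spaces), with homotopy inverse $\et$ induced by the unit. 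The point is that the functor $Y^{\bullet}\otimes_{\bF^{\SI}}(-)$, being a left adjoint, sends this homotopy equivalence to a homotopy equivalence. Since homotopies in $\sF$-$G$-spaces are preserved under the tensor $Y^{\bullet}\otimes_{\bF^{\SI}}(-)$ (a cylinder in the category of $\sF$-$G$-spaces maps to a cylinder in $G$-spaces), we get
\[ \epz\colon B(Y^{\bullet},\bF^{\SI},{^{\bullet}}\!X) \simeq Y^{\bullet}\otimes_{\bF^{\SI}} {^{\bullet}}\!X \]
as a $G$-homotopy equivalence, hence a $G$-equivalence.

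The remaining step is to identify the target. We must check that $Y^{\bullet}\otimes_{\bF^{\SI}}{^{\bullet}}\!X$, the monadic tensor product over the monad $\bF^{\SI}$ built from the maximal subgroupoid $\SI\subset \sF$, agrees with the categorical tensor product $Y^{\bullet}\otimes_{\sF}{^{\bullet}}\!X$ over all of $\sF$ that appears in \myref{coequal}. By the discussion in \S\ref{formal}, $Y^{\bullet}\otimes_{\bF^{\SI}}{^{\bullet}}\!X$ is the coequalizer of the pair
\[ \coprod_{(c,d)} \sT_G(\mb d,Y)\sma \SI(\mb c,\mb d)\sma ({\mb d}\sma X) \rightrightarrows \coprod_{d} \sT_G(\mb d,Y)\sma (\mb d \sma X), \]
so it is the coend computing $Y^{\bullet}\otimes_{\sF}{^{\bullet}}\!X$ but taken only over the subgroupoid $\SI$ rather than all of $\sF$. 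These two coends agree: the non-invertible morphisms of $\sF$ (injections and projections) are already accounted for in the identifications of \myref{coequal} — in that proof every wedge summand of $Y^n\sma({\mb n}\sma X)$ is collapsed to a single copy of $Y\sma X$ using only the inclusions $\io_j\colon \mb 1\rtarr \mb n$ and projections $\io_j^*$, and one checks that the relation imposed by the $\SI_n$-actions already suffices to make all of these $j$-indexed copies coincide. Thus $Y^{\bullet}\otimes_{\bF^{\SI}}{^{\bullet}}\!X\iso Y\sma X$, and combining with the homotopy equivalence above gives the natural $G$-equivalence $B(Y^{\bullet},\bF^{\SI},{^{\bullet}}\!X)\simeq Y\sma X$.

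The main obstacle I anticipate is the bookkeeping in that last identification: one has to be careful that passing from the categorical tensor $\otimes_{\sF}$ to the monadic tensor $\otimes_{\bF^{\SI}}$ — i.e. only coequalizing over the groupoid $\SI$ — really does not lose (or gain) any identifications when the covariant input is the very special functor ${^{\bullet}}\!X=F_1X$. A clean way to handle this is to observe that $F_1=\bP^{\sF}_{\{\mb 1\}}$ is induced up from the object $\mb 1$, so by the usual bar/Kan-extension manipulations $Y^{\bullet}\otimes_{\bF^{\SI}}F_1X \iso Y^{\bullet}(\mb 1)\sma X \iso Y\sma X$, which sidesteps the coend comparison entirely; I would include this as the slick verification of \myref{coequal}'s conclusion in the $\SI$-setting. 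Naturality in $X$ (and in $Y$) is immediate from naturality of all the constructions involved.
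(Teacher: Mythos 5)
Your proposal has a genuine gap in the step where you claim that ``the functor $Y^{\bullet}\otimes_{\bF^{\SI}}(-)$, being a left adjoint, sends this homotopy equivalence to a homotopy equivalence.'' The homotopy equivalence $\epz\colon B(\bF^\SI,\bF^\SI,{^\bullet}\!X)\to {^\bullet}\!X$ supplied by the extra degeneracy argument of \S\ref{formal} is a homotopy equivalence in the \emph{ground category} $\sW=\Fun(\SI,\sT_G)$, not in the category of $\bF^\SI$-algebras. Although $\epz$ itself is a map of $\bF^\SI$-algebras, the homotopy inverse $\et$ and the homotopy $\et\circ\epz\simeq\id$ are not; the (left) extra degeneracy $s_{-1}=\et\bE^{q+1}$ does not respect the free $\bE$-algebra structures, so it is not a map on which $Y^\bullet\otimes_{\bF^\SI}(-)$ can be evaluated. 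Your parenthetical ``a cylinder in the category of $\sF$-$G$-spaces maps to a cylinder in $G$-spaces'' assumes the homotopy is through $\sF$-$G$-space maps, which it is not. Consequently the claimed $G$-homotopy equivalence $B(Y^\bullet,\bF^\SI,{^\bullet}\!X)\simeq Y^\bullet\otimes_{\bF^\SI}{^\bullet}\!X$ has not actually been established.

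The correct resolution, and the paper's actual argument, is to bypass transport entirely and find a \emph{different} extra degeneracy directly on the simplicial object $B_\sbt(Y^\bullet,\bF^\SI,{^\bullet}\!X)$. The crucial observation is that ${^\bullet}\!X=\sF(\mathbf 1,\bullet)\sma X$, so the identity $\mathbf 1\to\mathbf 1$ gives an extra degeneracy \emph{on the right-hand side} of the bar construction; the resulting homotopy retraction onto the augmentation lands in $\sT_G$, which is all that is needed. Equivalently and more conceptually, ${^\bullet}\!X=F_1X$ is a \emph{free} $\bF^\SI$-algebra (on the $\Fun(\SI,\sT_G)$-object concentrated at $\mathbf 1$), and for free algebras one has the standard equivalence $B(\bY,\bE,\bE Z)\simeq\bY Z$. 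Your final paragraph actually hints at this via the observation $F_1=\bP^\sF_{\{\mathbf 1\}}$, and your identification $Y^\bullet\otimes_{\bF^\SI}F_1X\iso Y(\mathbf 1)\sma X\iso Y\sma X$ is correct; if you had developed that remark into an extra degeneracy argument on the free side, rather than attempting to transport the $B(\bE,\bE,-)$ retraction, you would have recovered the paper's proof.
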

\begin{proof}
The inclusion of $Y\sma X$ in the $G$-space of $0$-simplices of the simplicial bar construction
induces a $G$-map $\et\colon Y\sma X \rtarr B(Y^{\bullet},\bF^{\SI},^{\bullet\!}X)$.  We have a 
$G$-map $ \zeta \colon B(Y^{\bullet},\bF^{\SI},^{\bullet\!}X)\rtarr Y\otimes_{\sF} X \iso Y\sma X$
obtained as usual by composition and evaluation maps.  Clearly $ \zeta \com \et = \id$.  Noting
that 
\[ B\big(Y^{\bullet},\bF^{\SI},^{\bullet\!}X\big) = B\big(Y^{\bullet},\bF^{\SI},\sF(\mathbf 1,\bullet)\sma X\big), \]
we see that the identity map $\bf 1 \rtarr \bf 1$ gives rise to an extra degeneracy operator, and
then a standard argument gives a homotopy $\et\com  \zeta \htp \id$, as we used in \S\ref{variants}.
\end{proof}

\begin{proof}[Proof of \myref{spacecomp}] 
The map $\et$ of the previous proof specializes to give 
$$\et\colon X\rtarr B((S^0)^{\bullet},\bF^{\SI},^{\bullet\!}X) = \bS_G(^{\bullet}\! X)_0.$$
We define $\mu$ to be its adjoint under the adjunction between $\SI_G^\infty$ and the $0$th space functor. Theorem \ref{coequalconj} implies that $\mu$ is a levelwise
$G$-homotopy equivalence.   It remains to prove that $\mu$ is monoidal. 

Applying \myref{catstan} with $\bL=\SI^{\infty}_G$, $\bR=(-)_0$, and $\bH=\bS_G(^{\bullet}(-))$, it is enough to prove that $\eta\colon \Id \rtarr \bR\bH$ is monoidal. 
The unit diagram commutes since $\eta$ is the adjoint of the unit map of spectra $\epz \colon S_G \rtarr \bS_G(\bf{I})$ when $X=S^0$.
The other diagram we must show commutes is 
\[\xymatrix{
X \sma Y \ar[r]^-{\eta \sma \eta} \ar@{=}[d] & 
\bS_G(^{\bullet}\! X)_0 \sma \bS_G(^{\bullet}\! Y)_0\ar[d]^{\varphi_0} \\
 X \sma Y \ar[r]_-{\eta} & \bS_G(^{\bullet}\! (X\sma Y))_0.} \]
Since $\eta$ is the inclusion into the $0$-simplices of the bar construction, this is clear from the definition of $\varphi$. 
\end{proof}

\subsection{Ring, module, and algebra Eilenberg-Mac\,Lane $G$-spectra}

We show how the symmetric monoidal machine $\shim$  transports
$G$-rings, $G$-modules, and $G$-algebras to their genuine $G$-spectrum level analogs.

A commutative topological $G$-monoid $A$ is the same thing as a $\mathbf{Com}$-$G$-space, where 
$\mathbf{Com}$ is the commutativity operad; we require the basepoint $0$ of $A$ to be nondegenerate.   By \myref{PIvsF} and \myref{RRG}, the functor  $\bR$ 
from $G$-spaces to $\PI$-$G$-spaces given by  $(\bR A)({\mathbf n})= A^n$ takes $\mathbf{Com}$-$G$-spaces to $\sF$-$G$-spaces.   The sum induces the $\sF$-action, and $\bR A$ is trivially \gen-special.   

We restrict attention to discrete abelian $G$-groups $A$. Then $\shim A$
is an Eilenberg-Mac\,Lane $G$-spectrum $HA$.   We have the free abelian group functor $\bZ[-]$ 
that sends a $G$-set $S$ to the free abelian $G$-group  $\bZ[S]$. We view $\bZ[S]$ as obtained from 
the based version $\bZ[S_+]$ by setting the basepoint equal to zero. The composite $\bR \bZ[-]$ sends 
finite $G$-sets to \gen-special $\sF$-$G$-spaces.  Specializing the functor $^\bullet\! (-)$ 
to $G$-sets, it gives a functor from based $G$-sets to $\sF$-$G$-spaces.   We have the following observation.

\begin{lem} For based $G$-sets $S_+$, there is a natural map $h\colon ^\bullet\! (S_+) \rtarr \bR \bZ[S]$ of $\sF$-$G$-spaces.
\end{lem}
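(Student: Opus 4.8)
The plan is to construct $h$ levelwise and then check naturality in $\sF$. First I would recall the relevant descriptions of the two $\sF$-$G$-spaces: the value of $^\bullet\!(S_+)$ at $\mathbf{n}$ is $^n\!(S_+) = \mathbf{n}\sma S_+$, the $n$-fold wedge of $S_+$ (which as a set is the union of $n$ copies of $S$ together with a single basepoint), while the value of $\bR\bZ[S]$ at $\mathbf{n}$ is $\bZ[S]^n$, the $G$-space of $n$-tuples of elements of the free abelian $G$-group on $S$. The obvious map $h_n\colon \mathbf{n}\sma S_+ \rtarr \bZ[S]^n$ sends the point of $S$ in the $i$th wedge summand to the tuple whose $i$th coordinate is the corresponding generator of $\bZ[S]$ and whose other coordinates are $0$; the basepoint goes to the zero tuple. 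This is manifestly a based $G$-map since the $G$-actions on $\mathbf{n}\sma S_+$ and on $\bZ[S]^n$ are both induced coordinatewise from the $G$-action on $S$.

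Next I would verify that the collection $\{h_n\}$ is a map of $\sF$-$G$-spaces, i.e. natural with respect to the structure maps indexed by morphisms $\ph\colon \mathbf{m}\rtarr\mathbf{n}$ of $\sF$. Because every morphism of $\sF$ is generated under composition by injections, projections, permutations, and the fold maps $\mathbf{n}\rtarr\mathbf{1}$ (cf. Remark~\myref{PIvsF}), it suffices to check naturality on these generators. For the maps in $\PI$ (injections, projections, permutations) the induced operations on $^\bullet\!(S_+)$ permute and insert/delete wedge summands, and the corresponding operations on $\bR\bZ[S]$ permute and insert/delete coordinates, so naturality is immediate from the definition of $h_n$. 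The only genuinely interesting case is the fold map $\mathbf{n}\rtarr\mathbf{1}$ for $n\ge 2$: on $^\bullet\!(S_+)$ this sends all $n$ wedge summands onto the single copy of $S$ (in particular it is \emph{not} well-defined as a map of \emph{sets} since it identifies the $n$ images of a point $s\in S$, but at the level of based spaces $\mathbf{n}\sma S_+ \to \mathbf 1 \sma S_+ = S_+$ it folds the wedge), while on $\bR\bZ[S]$ it is the sum map $\bZ[S]^n\rtarr\bZ[S]$. Chasing a generator $s$ in the $i$th summand: under fold then $h_1$ it maps to the generator $[s]\in\bZ[S]$; under $h_n$ then sum it maps to the tuple with $[s]$ in position $i$ and $0$ elsewhere, whose sum is again $[s]$. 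So the square commutes, and this is exactly the place where one uses that the target values are abelian groups rather than mere sets — it is what makes $\bR\bZ[S]$ into an honest $\sF$-$G$-space in the first place.

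The main (and only) obstacle is really the bookkeeping in that last case: one must be careful that the fold map is being interpreted correctly as a based map of based $G$-spaces and that the additive structure of $\bZ[S]$ is what receives the folded summands. There is no hard homotopy-theoretic content here; the lemma is an elementary naturality check, and the proof is a few lines once the generators-of-$\sF$ reduction is invoked.
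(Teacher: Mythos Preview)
Your construction of $h$ is exactly the paper's: send $s$ in the $j$th wedge summand to the tuple with $[s]$ in the $j$th coordinate and $0$ elsewhere, and check that this is a map of $\sF$-$G$-spaces.  The paper merely says ``it is easily checked'' for naturality, so your explicit verification via generators of $\sF$ (and in particular the fold-map case) is more detailed but follows the same line; the only quibble is your parenthetical that the fold map ``is not well-defined as a map of sets'' --- it is perfectly well-defined on $\mathbf{n}\sma S_+$, and your own chase confirms the square commutes.
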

\begin{proof}
We have $^{n}\! (S_+) = (\coprod_n S)_+$.  The map $h$ sends the basepoint to $0$ and sends the $j$th copy of $S$ to the
set $S$ viewed as the generating set of the $j$th coordinate of the product $\bZ[S]^n$.  It is easily checked that this is a map of $\sF$-$G$-spaces.
\end{proof}

The letter $h$ is meant to indicate that $\bS_G h\colon  \SI^{\infty}_G S_+ \rtarr H\bZ[S]$ gives a machine built avatar 
of a specialization of the Hurewicz homomorphism.  

We shall prove the following result.

\begin{lem}   The functor $\bR$ from abelian $G$-groups to $\sF$-$G$-spaces is lax symmetric monoidal.
\end{lem}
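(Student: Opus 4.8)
The source category here is that of (discrete) abelian $G$-groups, symmetric monoidal under $\otimes=\otimes_{\bZ}$ with the diagonal $G$-action and unit the trivial $G$-group $\bZ$; the target is the category of $\sF$-$G$-spaces under the Day convolution $\sma$ of \S\ref{state}, with unit $\mathbf{I}$. The plan is to exhibit the lax structure maps and then check the coherence diagrams. For the unit I would take $\et\colon \mathbf{I}\rtarr \bR\bZ$ to be the map $h$ of the lemma above constructing $h$, in the case $S=\ast$: at level $\mathbf{n}$ it is the map $\mathbf{n}=\sF(\mathbf{1},\mathbf{n})\rtarr \bZ^n=(\bR\bZ)(\mathbf{n})$ sending $j$ to the $j$th standard basis vector, equivalently the map of $\sF$-$G$-spaces classified under Yoneda by the element $1\in\bZ=(\bR\bZ)(\mathbf{1})$. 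For the multiplication I would invoke the universal property \eqref{Day}: to produce $\varphi\colon \bR A\sma \bR B\rtarr \bR(A\otimes B)$ it suffices to give a map $\bR A\barwedge \bR B\rtarr \bR(A\otimes B)\circ\sma$ of $G\sT$-functors $\sF\times\sF\rtarr G\sT$, and I would take the one whose value at $(\mathbf{m},\mathbf{n})$ is
\[ A^m\sma B^n\rtarr (A\otimes B)^{mn},\qquad (a_1,\dots,a_m)\sma (b_1,\dots,b_n)\longmapsto (a_i\otimes b_j)_{(i,j)}, \]
where we identify $\mathbf{m}\sma \mathbf{n}$ with $\mathbf{mn}$ as in \S\ref{state}. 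This descends through the smash product since $a_i\otimes b_j=0$ as soon as all the $a_i$ vanish or all the $b_j$ vanish, and it is $G$-equivariant for the diagonal actions.

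The one substantive point is that this assignment is natural in $(\mathbf{m},\mathbf{n})\in\sF\times\sF$. Recalling (\myref{RRG}, \myref{PIvsF}) that the $\sF$-action on $\bR A$ sends $\ph\in\sF(\mathbf{m},\mathbf{m}')$ to $\ph_{\ast}f=\big(x'\mapsto \sum_{\ph(x)=x'}f(x)\big)$, the required naturality square for a pair of morphisms $\ph,\psi$ translates exactly into the identity
\[ \Big(\sum_{\ph(x)=x'}a_x\Big)\otimes \Big(\sum_{\psi(y)=y'}b_y\Big)=\sum_{\ph(x)=x',\,\psi(y)=y'} a_x\otimes b_y, \]
which is nothing but the bilinearity of $\otimes_{\bZ}$. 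This is the heart of the matter, and the only place the abelian-group structure is genuinely used: for a general commutative $G$-monoid the displayed map need not respect the $\sF$-action. Granting it, \eqref{Day} produces $\varphi$, manifestly natural in $A$ and $B$.

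It then remains to verify the associativity diagram, the two unit diagrams, and the symmetry diagram. I expect these to be routine. On the level of the external smash the multiplication is the evident ternary assignment $(a_i)\sma (b_j)\sma (c_k)\mapsto (a_i\otimes b_j\otimes c_k)$, so associativity reduces to associativity of $\otimes_{\bZ}$ together with the associativity constraints of $\sma$ on $\sF$ and on $G\sT$; the unit diagrams reduce to the canonical isomorphisms $\bZ\otimes A\iso A\iso A\otimes \bZ$ and to $S^0$ being the unit for $\sma$ on $G\sT$, compatibly with $\et=h$. For the symmetry one must check that $\bR(\tau_{A,B})\circ\varphi_{A,B}$ equals $\varphi_{B,A}\circ\tau_{\bR A,\bR B}$; since $\bR(\tau_{A,B})$ is applied levelwise, on external-smash level the clockwise composite sends $(a_i)\sma (b_j)$ to $(b_j\otimes a_i)$, and the counterclockwise composite does the same once one uses that the symmetry isomorphism $\mathbf{m}\sma \mathbf{n}\iso \mathbf{n}\sma \mathbf{m}$ built into the Day symmetry is precisely the reindexing $(i,j)\leftrightarrow (j,i)$. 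The point to emphasize --- and the reason none of the difficulties of \S\ref{symmon} arise here --- is that $\bR(A\otimes B)$ is evaluated on honest objects of $\sF$, so the only smash of finite based sets that enters is the permutative structure of $(\sF,\sma)$ itself; there is no \emph{internal} lexicographic choice that must be rectified by passing to $\SI$-orbits, and the symmetry holds on the nose.
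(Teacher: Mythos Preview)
Your proof is correct and follows essentially the same approach as the paper: both take the unit to be the map $h$ specialized to a one-point set and construct $\varphi$ via the Day adjunction from the external pairing $(a_i)\sma(b_j)\mapsto(a_i\otimes b_j)_{(i,j)}$. You supply more detail than the paper does---in particular the bilinearity verification for naturality in $(\mathbf{m},\mathbf{n})$ and the explicit symmetry check---where the paper simply declares the coherence relations ``straightforward.''
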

\begin{proof}  Since ${\bf I} = ^\bullet\! S^0$, the map $h$ specialized to $S=\{1\}$ gives the required
unit map $\bf I\rtarr \bR \bZ$.  For abelian $G$-groups $A$ and $B$, we must construct a map of $\sF$-$G$-spaces
\[\bR A \sma \bR B \rtarr \bR(A\otimes B).\] By the universal property of Day convolution, this amounts to constructing a natural transformation of functors out of $\sF \times \sF$,
\begin{equation}\label{switch}
\bR A \barwedge \bR B \rtarr \bR (A \otimes B)\circ \sma.
\end{equation}
Note that the latter sends a pair $(\bf m,\bf n)$ to $(A\otimes B)^{mn}$, where $\bf m\sma \bf n = \bf{mn}$,
ordered lexicographically.  At $(\bf m,\bf n)$, the map in (\ref{switch}) is the composite \[A^m \times  B^n \rtarr (A\times B)^{mn} \rtarr (A\otimes B)^{mn},\]
where the first map uses the lexicographical ordering to send a pair $(\underline{a},\underline{b})$ to the $mn$-tuple that has $(a_i,b_j)$ in the $(i,j)$th position, and the second map is the 
$mn$-fold product of the canonical map $A\times B\rtarr A\otimes B$.
It is straightforward to check the required coherence relations.
\end{proof}

\begin{cor} The composite functor $\symfsma\com \bR$ from abelian $G$-groups to Eilenberg-Mac\,Lane $G$-spectra
takes $G$-rings, $G$-modules over $G$-rings, and $G$-algebras over commutative $G$-rings to ring $G$-spectra, 
module $G$-spectra over $G$-ring spectra, and algebras over commutative ring $G$-spectra.
\end{cor}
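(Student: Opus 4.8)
The plan is to reduce the corollary to the formal fact that lax symmetric monoidal functors preserve monoids, commutative monoids, modules, and algebras, so that no new homotopical input is needed. First I would observe that the composite $\symfsma\com\bR$ is lax symmetric monoidal: the functor $\bR$ from abelian $G$-groups to $\sF$-$G$-spaces is lax symmetric monoidal by the preceding lemma, the machine $\symfsma$ is lax symmetric monoidal by \myref{SymMonThm}, and a composite of lax symmetric monoidal functors is again lax symmetric monoidal (one composes the unit maps and the product constraints in the evident way, and the coherence diagrams for the composite split as pastings of the coherence diagrams of the two factors). Thus $\symfsma\com\bR$ is a lax symmetric monoidal functor $(\text{abelian }G\text{-groups},\otimes,\bZ)\rtarr(\text{orthogonal }G\text{-spectra},\sma,S_G)$.

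Next I would invoke the standard categorical principle that a lax monoidal functor $F\colon(\mathcal{C},\otimes)\rtarr(\mathcal{D},\otimes)$ carries monoids to monoids and, for a fixed monoid $R$ in $\mathcal{C}$, carries left $R$-modules to $F(R)$-modules; and that if $F$ is moreover lax symmetric monoidal then it carries commutative monoids to commutative monoids, and for a commutative monoid $R$ it induces a lax symmetric monoidal functor from the category of $R$-modules to the category of $F(R)$-modules with respect to the relative tensor products $\otimes_R$, hence also carries $R$-algebras (the monoids therein) to $F(R)$-algebras. Applying this with $F=\symfsma\com\bR$ yields everything at once: a $G$-ring is a monoid in abelian $G$-groups, so its image is a monoid in orthogonal $G$-spectra, i.e.\ a ring $G$-spectrum; a commutative $G$-ring maps to a commutative ring $G$-spectrum; a $G$-module over a $G$-ring $R$ maps to a module over the ring $G$-spectrum $\symfsma\bR R$; and a $G$-algebra over a commutative $G$-ring $R$ maps to an algebra over the commutative ring $G$-spectrum $\symfsma\bR R$. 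Since $\symfsma\bR A$ is the Eilenberg--Mac\,Lane $G$-spectrum $HA$ for a discrete abelian $G$-group $A$, as recorded above, these are precisely the asserted Eilenberg--Mac\,Lane ring, module, and algebra $G$-spectra.

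The one point that genuinely requires the work of this paper, rather than the classical machine of \cite{Shim, MMO}, is that $F$ be lax \emph{symmetric} monoidal and not merely lax monoidal: otherwise commutativity of $G$-rings and the existence of algebras over commutative $G$-rings would not be transported, and it is exactly the symmetric bar construction defined using $\SI$, through \myref{SymMonThm}, that supplies the symmetry constraint $\ta$. The remaining checks are routine bookkeeping: that $\bR A$ is a legitimate $\sF$-$G$-space, i.e.\ lands in $\sT_G$ (here we use that $A$ is discrete, so its basepoint is nondegenerate and $A^n$ is nondegenerately based as a $(G\times\SI_n)$-space, exactly as in the proof of \myref{Reedycof}), and that the structure maps of the transported algebraic objects are the expected ones, obtained by unwinding the constraints of $\bR$ and $\varphi$. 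I expect the only mild subtlety to be purely expository, namely stating cleanly which categorical forms of ``ring'', ``module'', and ``algebra'' are being used; there is no real obstacle.
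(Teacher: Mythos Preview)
Your proposal is correct and follows exactly the paper's approach: the paper's own proof simply observes that $G$-rings are monoids in $(G\sA b,\otimes)$ and that such preservation properties are formal for any lax symmetric monoidal functor. You have supplied the details the paper leaves implicit, but the argument is the same.
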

\begin{proof}
$G$-rings are just monoids in the symmetric monoidal category $(G\sA b,\otimes)$, and so on. 
Preservation properties such as these are formal for any lax symmetric monoidal functor.
\end{proof}

\begin{rem} In \cite{BO}, Bohmann and Osorno use an infinite loop space machine starting from 
Guillou and May \cite{GM2} to construct Eilenberg-Mac\,Lane $G$-spectra $HM$ for all Mackey
functors $M$, whereas the fixed points of abelian $G$-groups give only very special examples.      
However, their machine is not yet known to work multiplicatively.  There is work in progress 
seeking a common generalization.
\end{rem}

\subsection{Homotopies}

We give a brief discussion of how the Segal machine $\bS_G$ sees homotopies. 
The category of orthogonal $G$-spectra is tensored over the category of spaces via the half-smash product.  For a $G$-spectrum $E$ and 
a space $A$,  the half-smash product $E\sma A_+$ has $V$th $G$-space $(E\sma A_+)(V) = E(V)\sma A_+$.  We are mainly interested in $A=I$, and we may as well 
restrict to CW complexes $A$.  If $A$ is contractible and $E$ is an $\OM$-$G$-spectrum, then so is $E\sma A_+$.  
The category of $\sF$-$G$-spaces is also tensored over the category of spaces via the half-smash product.  
For an $\sF$-$G$-space $X$,  
$(X \sma A_+)({\bf n}) = X({\bf n})\sma A_+$.   Similarly, the category of $\sF_G$-$G$-spaces is tensored over spaces.
In these contexts, just as for $G$-spectra, homotopies between maps $X\rtarr Y$ are given by 
maps $X\sma I_+ \rtarr Y$.  

\begin{prop}  The Segal machine $\bS_G$ (in any of its avatars) preserves tensors with spaces and therefore preserves homotopies.
\end{prop}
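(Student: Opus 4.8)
The plan is to prove the stronger statement that for every space $A$ there is a natural isomorphism of orthogonal $G$-spectra $\bS_G(X \sma A_+) \iso (\bS_G X) \sma A_+$, and then to deduce preservation of homotopies formally. Indeed, granting this, a homotopy $H \colon X \sma I_+ \rtarr Y$ of $\sF$-$G$-spaces gives $\bS_G H \colon (\bS_G X) \sma I_+ \iso \bS_G(X \sma I_+) \rtarr \bS_G Y$, which is a homotopy between $\bS_G(H_0)$ and $\bS_G(H_1)$, where $H_i \colon X \iso X \sma \{i\}_+ \rtarr X \sma I_+ \xrightarrow{H} Y$ is the $i$th restriction of $H$.

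The key formal input is that the functor $(-) \sma A_+$ on based $G$-spaces is a left adjoint, with right adjoint the cotensor $\mathbf{Map}(A,-)$, and hence preserves all colimits; moreover it commutes up to natural isomorphism with smash products of based $G$-spaces (by associativity and symmetry of $\sma$), and, since $G$ and every symmetric group act trivially on $A_+$, it commutes with the formation of $\SI_n$-orbits. By inspection of the formulas in \S\ref{formal} and \S\ref{concrete}, each of the monads $\bF^{\SI}$, $\bF_G^{\SI_G}$, $\bF_G^{\bN_G}$ (and their $\times$-variants) is built entirely out of coproducts, smash products with the discrete morphism sets of $\sF$ or $\sF_G$, and coequalizers; so I would first record natural isomorphisms $\bF^{\SI}(X \sma A_+) \iso (\bF^{\SI} X) \sma A_+$ and, by iteration, $(\bF^{\SI})^q(X \sma A_+) \iso ((\bF^{\SI})^q X) \sma A_+$, and likewise $(S^V)^{\bullet}(Z \sma A_+) \iso ((S^V)^{\bullet} Z) \sma A_+$ for the relevant right $\bF^{\SI}$-functor. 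Since the faces and degeneracies of the bar construction are induced by the (natural) structure maps of the monad and the action maps, these assemble to a natural isomorphism of simplicial $G$-spaces $B_\sbt((S^V)^{\bullet}, \bF^{\SI}, X \sma A_+) \iso B_\sbt((S^V)^{\bullet}, \bF^{\SI}, X) \sma A_+$; applying based geometric realization — a coend, hence also commuting with $(-) \sma A_+$ — yields $(\bS_G(X \sma A_+))(V) \iso (\bS_G X)(V) \sma A_+ = ((\bS_G X) \sma A_+)(V)$. For $\shimtimes$ one argues identically, additionally using that $\widetilde{B}(-)$ is a quotient of $B(-)$ and that quotients are preserved by $(-) \sma A_+$.

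It then remains to check that these levelwise isomorphisms constitute an isomorphism of orthogonal $G$-spectra, i.e. that they are compatible with the structure $G$-maps of \myref{wgspaces}. This follows from naturality: the structure maps are induced by functoriality of $A \mapsto B(A^\bullet, \bF^{\SI}, X)$ in the sphere variable together with the evaluation maps out of $\sT_G(S^V, S^{V \oplus W})$, all of which are natural in the inputs, so smashing the defining diagram with $A_+$ identifies the structure maps for $X \sma A_+$ with those for $X$ smashed with the identity of $A_+$. The only mildly delicate point — the main obstacle, such as it is — is the bookkeeping: one must move the single factor $A_+$ past the many smash factors occurring in $B_q((S^V)^\bullet, \bF^{\SI}, -)$ using associativity and symmetry isomorphisms of $\sma$, and verify that this is done coherently and compatibly with the $\SI_n$-orbit quotients; this is handled by Mac\,Lane coherence and causes no real difficulty. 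The argument applies verbatim with $\bS_G$ replaced by any of its avatars $\symfsma$, $\symsma$, $\shimsma$, or $\shimtimes$.
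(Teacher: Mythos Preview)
Your proposal is correct and follows essentially the same approach as the paper: both argue that the levelwise isomorphism $B(A^{\bullet},\bF,X\sma A_+)\iso B(A^{\bullet},\bF,X)\sma A_+$ holds by inspection of the definitions, since $(-)\sma A_+$ commutes with the colimits, smash products, and orbit constructions from which the bar construction is assembled. The paper's proof is a one-line citation of this same observation (referring to \cite[Remark~3.3]{MMO}), whereas you spell out the ingredients explicitly; there is no substantive difference.
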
   
\begin{proof} Arguing as in \cite[Remark 3.3]{MMO} or just inspecting definitions, we see for example that 
$$   (B^{\bN_G}Y)(S^V))\sma A_+ \iso (B^{\bN_G}(Y\sma A_+))(S^V).  $$
This gives the proof for machine $\shimsma$.  The analogous commutation relation holds for our other machines.  
\end{proof}

\bibliographystyle{plain}
\bibliography{references}

\begin{thebibliography}{10}

\bibitem{BVbook}
J.~M. Boardman and R.~M. Vogt.
\newblock {\em Homotopy invariant algebraic structures on topological spaces}.
\newblock Lecture Notes in Mathematics, Vol. 347. Springer-Verlag, Berlin-New
  York, 1973.

\bibitem{BO}
Anna~Marie Bohmann and Ang\'elica Osorno.
\newblock Constructing equivariant spectra via categorical {M}ackey functors.
\newblock {\em Algebr. Geom. Topol.}, 15(1):537--563, 2015.

\bibitem{DT}
Albrecht Dold and Ren{\'e} Thom.
\newblock Quasifaserungen und unendliche symmetrische produkte.
\newblock {\em Annals of Mathematics}, pages 239--281, 1958.

\bibitem{GM2}
B.~J. Guillou and J.~P. May.
\newblock Models of {$G$}-spectra as presheaves of spectra.
\newblock Available as arXiv:1110.3571.

\bibitem{GM3}
B.~J. Guillou and J.~P. May.
\newblock Equivariant iterated loop space theory and permutative
  {G}--categories.
\newblock {\em Algebr. Geom. Topol.}, 17(6):3259--3339, 2017.

\bibitem{Illman}
S\"oren Illman.
\newblock Smooth equivariant triangulations of {$G$}-manifolds for {$G$} a
  finite group.
\newblock {\em Math. Ann.}, 233(3):199--220, 1978.

\bibitem{MM}
M.~A. Mandell and J.~P. May.
\newblock Equivariant orthogonal spectra and {$S$}-modules.
\newblock {\em Mem. Amer. Math. Soc.}, 159(755):x+108, 2002.

\bibitem{MMSS}
M.~A. Mandell, J.~P. May, S.~Schwede, and B.~Shipley.
\newblock Model categories of diagram spectra.
\newblock {\em Proc. London Math. Soc. (3)}, 82(2):441--512, 2001.

\bibitem{MayGeo}
J.~P. May.
\newblock {\em The geometry of iterated loop spaces}.
\newblock Springer-Verlag, Berlin, 1972.
\newblock Lectures Notes in Mathematics, Vol. 271.

\bibitem{Maypair}
J.~P. May.
\newblock Pairings of categories and spectra.
\newblock {\em J. Pure Appl. Algebra}, 19:299--346, 1980.

\bibitem{Rant1}
J.~P. May.
\newblock What precisely are {$E_\infty$} ring spaces and {$E_\infty$} ring
  spectra?
\newblock In {\em New topological contexts for {G}alois theory and algebraic
  geometry ({BIRS} 2008)}, volume~16 of {\em Geom. Topol. Monogr.}, pages
  215--282. Geom. Topol. Publ., Coventry, 2009.

\bibitem{MMO}
J.~P. May, M.~Merling, and A.~Osorno.
\newblock Equivariant infinite loop space theory {I}. {T}he space level story.
\newblock Available as arXiv:1704.03413.

\bibitem{MaySig}
J.~P. May and J.~Sigurdsson.
\newblock {\em Parametrized homotopy theory}, volume 132 of {\em Mathematical
  Surveys and Monographs}.
\newblock American Mathematical Society, Providence, RI, 2006.

\bibitem{MT}
J.~P. May and R.~Thomason.
\newblock The uniqueness of infinite loop space machines.
\newblock {\em Topology}, 17(3):205--224, 1978.

\bibitem{MayClass}
J.~Peter May.
\newblock Classifying spaces and fibrations.
\newblock {\em Mem. Amer. Math. Soc.}, 1(1, 155):xiii+98, 1975.

\bibitem{Sant}
Rekha Santhanam.
\newblock Units of equivariant ring spectra.
\newblock {\em Algebr. Geom. Topol.}, 11(3):1361--1403, 2011.

\bibitem{Schwede2}
Stefan Schwede.
\newblock {\em Global homotopy theory}.
\newblock Cambridge University Press, To appear.

\bibitem{Seg}
Graeme Segal.
\newblock Categories and cohomology theories.
\newblock {\em Topology}, 13:293--312, 1974.

\bibitem{Shim}
Kazuhisa Shimakawa.
\newblock Infinite loop {$G$}-spaces associated to monoidal {$G$}-graded
  categories.
\newblock {\em Publ. Res. Inst. Math. Sci.}, 25(2):239--262, 1989.

\bibitem{Shim2}
Kazuhisa Shimakawa.
\newblock A note on {$\Gamma_G$}-spaces.
\newblock {\em Osaka J. Math.}, 28(2):223--228, 1991.

\bibitem{Shul}
Michael Shulman.
\newblock Homotopy limits and colimits and enriched homotopy theory.
\newblock Available as arXiv:0610194.

\bibitem{Steph}
Marc Stephan.
\newblock On equivariant homotopy theory for model categories.
\newblock {\em Homology Homotopy Appl.}, 18(2):183--208, 2016.

\bibitem{CGWH}
Neil~P Strickland.
\newblock The category of {CGWH} spaces.
\newblock {\em Preprint}, 12, 2009.

\bibitem{Woolf}
Richard Woolfson.
\newblock Hyper-{$\Gamma $}-spaces and hyperspectra.
\newblock {\em Quart. J. Math. Oxford Ser. (2)}, 30(118):229--255, 1979.

\end{thebibliography}

\end{document}